\newcommand{\NN}{\mathbb{N}}
\newcommand{\RR}{\mathbb{R}}
\newcommand{\R}{\mathbb{R}}
\newcommand{\CC}{\mathbb{C}}
\newcommand{\TT}{\mathbb{T}}
\newcommand{\T}{\mathbb{T}}
\newcommand{\ZZ}{\mathbb{Z}}
\newcommand{\Z}{\mathbb{Z}}
\newcommand{\norm}[1]{\lVert#1\rVert}
\newtheorem{theorem}{Theorem}[section]
\newtheorem{corollary}[theorem]{Corollary}
\newtheorem{lemma}[theorem]{Lemma}
\newtheorem{proposition}[theorem]{Proposition}
\newcommand{\comment}[1]{}
\newtheorem{definition}[theorem]{Definition}
\numberwithin{equation}{section}
\def\a{\alpha}
\def\b{\beta}
\def\s{\sigma}
\def\la{\lambda}
\def\M{\mathcal{M}}
\def\w{\mathcal{W}}
\def\E{\mathcal{E}}
\def\R{\mathbb R}
\def\Q{\mathbb Q}
\def\C{\mathbb C}
\def\Z{\mathbb Z}
\def\N{\mathbb N}
\def\T{\mathbb T}
\def\dist{\text{dist}}
\def\Id{\text{Id}}
\def\e{\epsilon}
\renewcommand*\env@matrix[1][*\c@MaxMatrixCols c]{%
  \hskip -\arraycolsep
  \let\@ifnextchar\new@ifnextchar
  \array{#1}}
\renewcommand*\env@matrix[1][*\c@MaxMatrixCols c]{%
  \hskip -\arraycolsep
  \let\@ifnextchar\new@ifnextchar
  \array{#1}}
\begin{document}
\title[Global smooth rigidity for toral automorphisms]
{Global smooth rigidity for toral automorphisms}

\author[Boris Kalinin$^1$ \and Victoria Sadovskaya$^2$ \and Zhenqi Jenny Wang$^3$ ]{Boris Kalinin$^1$ \and Victoria Sadovskaya$^2$ \and Zhenqi Jenny Wang$^3$ }


\address{Department of Mathematics, The Pennsylvania State University, University Park, PA 16802, USA.}
\email{kalinin@psu.edu, sadovskaya@psu.edu}

\address{Department of Mathematics\\
        Michigan State University\\
        East Lansing, MI 48824,USA}
\email{wangzq@math.msu.edu}

\thanks{{\em Key words:} Hyperbolic toral automorphism, partially hyperbolic toral automorphism, conjugacy, rigidity.}

\thanks{$^1$  Supported in part by Simons Foundation grant 855238}
\thanks{$^2$ Supported in part by  Simons Foundation grant MP-TSM-00002874}
\thanks{$^3$ Supported in part by NSF grant DMS-1845416}

\begin{abstract}
We study regularity of a conjugacy between a hyperbolic or partially hyperbolic toral automorphism $L$ and a $C^\infty$ diffeomorphism $f$ of the torus. For a 
very weakly irreducible hyperbolic automorphism $L$ we show that any $C^1$ conjugacy is $C^\infty$.
For a very weakly irreducible  ergodic partially hyperbolic automorphism $L$  we show that any $C^{1+\text{H\"older}}$ conjugacy is $C^\infty$. As a corollary, we improve regularity of the conjugacy to $C^\infty$ in prior local and global rigidity results.
\end{abstract}

\date{\today}

\maketitle

\section{Introduction and main results}

\subsection{Hyperbolic systems on tori and their topological classification}$\;$\\
Hyperbolic automorphisms of tori are the prime examples of hyperbolic dynamical systems.
Any matrix $L \in GL(d,\Z)$ induces an automorphism of the torus $\T^d=\R^d/\Z^d$, which 
we denote by the same letter. An automorphism $L$ is {\em hyperbolic}
or {\em Anosov} if the matrix has no eigenvalues on the unit circle. In this case, $\R^d=E^s\oplus E^u$,
where $E^{s/u}$ is the sum of generalized eigenspaces of $L$ corresponding to the eigenvalues 
of modulus less/grater than 1. In general, a diffeomorphism $f$ of a compact manifold $\M$ is {\em Anosov} if there exist 
a continuous $Df$-invariant splitting $T\M=\E^s\oplus \E^u$ and constants $K>0$ and 
$\theta<1$ such that for all $n\in \N$,
$$
\| Df^n(v) \|  \leq K\theta^n \| v \|
     \;\text{ for all }v \in \E^s , \;\text{ and }\;\, \| Df^{-n}(v) \| \leq K\theta^n  \| v \|
     \;\text{ for all }v \in \E^u. 
$$  
\noindent
The sub-bundles $\E^s$ and $\E^u$ are called stable and unstable. They are tangent to the stable and unstable foliations $\w^s$ and $\w^u$. 


Classical result of Franks and Manning \cite{F,M} establish topological classification of Anosov diffeomorphisms of $\T^d$. Any such diffeomorphism $f$ is topologically conjugate to the hyperbolic automorphism $L$ that $f$ induces on $\Z^d=H_1(\T^d,\Z)$. In particular, a hyperbolic automorphisms $L$ is topologically conjugate to any $C^1$-small perturbation.
 A {\em topological conjugacy} is
 a ho\-meo\-morphism $H$ of $\T^d$ such that 
\begin{equation} \label{Conj def}
L\circ H= H \circ f.
\end{equation}
Any two such conjugacies differ by an affine automorphism of $\T^d$ commuting with $L$ \cite{W},
and in particular have the same regularity.

\subsection{Regularity of conjugacy for hyperbolic  automorphisms.}$\;$\\ 
A conjugacy $H$ in \eqref{Conj def} is always bi-H\"older, but it is usually not even $C^1$, as there are various obstructions to smoothness.  For example, $C^1$ regularity of $H$ requires that 
Lyapunov exponents of $L$ equal Lyapunov exponents of $f$ with respect to any measure,
and in particular at periodic orbits. Thus smoothness of the conjugacy is a rare phenomenon 
and it is often referred to as rigidity. The problem of regularity of $H$ has been extensively studied
and it can be rougly split into two questions:
 
(Q1) Is the conjugacy $C^1$ assuming vanishing of some natural obstructions? 

(Q2) If the conjugacy is $C^1$, is it $C^\infty$ for a $C^\infty$ diffeomorphism $f$?

\noindent These questions were often considered in a local setting were $f$ is a small perturbation of $L$.
Answering them in the global setting would give a description 
of  Anosov diffeomorphisms smoothly conjugate to algebraic models.

In dimension two, complete positive answers to both questions were obtained by de la Llave, Marco, and Moriy\'on \cite{LMM86,L87,L92}. 

 The higher dimensional case is much more complicated and answers to both questions 
 are negative in general, without some irreducibility assumption on $L$.
Examples by de la Llave in  \cite{L92} demonstrated that a conjugacy between a hyperbolic $L$ and its analytic perturbation $f$ can be $C^k$ but is not $C^{k+1}$, and also that vanishing of periodic obstructions
 may not yield $C^1$ conjugacy. Automorphisms $L$ in these examples are products, 
and hence  reducible.
We recall that $L$ is {\em irreducible}\, if it has no nontrivial rational invariant subspace or, equivalently, if its characteristic polynomial is irreducible over $\Q$. 

For large classes of irreducible $L$, positive answers to question (Q1) were obtained by Gogolev, Guysinky, Kalinin, Sadovskaya, Saghin, Yang, and DeWitt \cite{GG,G, GKS11, SY, GKS20,DW,DW2} under various assumptions on Lyapunov exponents and periodic data.
These results were for the local setting, and  recently they were extended to the global setting
by DeWitt and Gogolev in \cite{DWG}. 
In contrast to the two-dimensional case, they established only $C^{1+\text{H\"older}}$ regularity of $H$. 
Smoothness of $H$ on $\T^2$ was obtained along one-dimensional stable and 
unstable foliations, which have $C^\infty$ leaves. 
The higher-dimensional results for $L$ with more than one (un)stable Lyapunov exponent showed regularity
of $H$ along intermediate invariant foliations corresponding to the Lyapunov splitting of $L$.
However, the leaves of these foliations are typically only $C^{1+\text{H\"older}}$ and hence 
studying regularity along them cannot yield higher smoothness. 

Nevertheless, Gogolev conjectured \cite{G,DWG} that the answer to question (Q2) is positive for an irreducible $L$. 
Until recently, the only progress in this direction for non-conformal case was the result of Gogolev 
for automorphisms of $\T^3$ with real spectrum \cite{G17}. The key step of his proof was showing that the leaves of the one-dimensional intermediate foliation are $C^{\infty}$ by studying certain cohomological 
 equation over a Diophantine translation on $\T^3$. 

Recently in \cite{KSW}  we used KAM-type techniques to obtained a positive answer to question (Q2) for perturbations {\it weakly irreducible} hyperbolic $L$. We call a matrix $L\in GL(d,\Z)$ weakly irreducible if all factors over $\Q$ of its characteristic polynomial have the same set of moduli of their roots.  
This assumption is strictly weaker than irreducibility and it is satisfied by  some products. 
The KAM-type  techniques required that the diffeomorphism $f$ is close to $L$ in 
$C^r$ topology, where $r$  depends on $L$ and is typically quite large. Thus this result did not
give a conclusive answer to question (Q2) even for the standard local setting with $C^1$ closeness.

 In our new results, we use a completely different approach 
which allows us to avoid any closeness assumption, and so we obtain a general global regularity result
under a very weak irreducibility assumption.
For a matrix $L\in GL(d,\Z)$ we denote the largest modulus of its eigenvalues by $\rho_{\max}$
and the smallest by $\rho_{\min}$. We say that $L$ is {\em very weakly irreducible} if every 
 factor over $\Q$ of the characteristic polynomial of $L$ has a root of modulus $\rho_{\max}$
 and  a root of modulus $\rho_{\min}$. 
 Clearly this condition is weaker than  weak irreducibility. 
 Essentially, for a toral automorphism $L$ it means that any algebraic factor of $L$ has the same largest and smallest Lyapunov exponents as $L$. 
 
 Now we state our main result in the hyperbolic case.

\begin{theorem} \label{mainA}
Let $L$ be a very weakly irreducible hyperbolic automorphism of $\T^d$ and let $f$ be a $C^\infty$  diffeomorphism of $\T^d$.
If  $f$ and $L$ are conjugate by a $C^1$ diffeomorphism $H$ then any conjugacy between $f$ and $L$ is a $C^\infty$ diffeomorphism. 
\end{theorem}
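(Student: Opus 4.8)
The plan is to bootstrap regularity of $H$ from $C^1$ to $C^\infty$ by first improving it along the stable and unstable foliations and then applying a Journé-type argument. The starting point is that once $H$ is $C^1$, the Lyapunov exponents of $f$ with respect to any $f$-invariant measure coincide with those of $L$, and $H$ maps the stable and unstable foliations $\w^{s/u}_f$ of $f$ to the corresponding linear foliations of $L$. In particular, $DH$ conjugates the derivative cocycle $Df|_{\E^s_f}$ to the constant cocycle $L|_{E^s}$, so the former is cohomologous to a constant. The crucial new idea, which avoids any closeness assumption, will be to exploit the very weak irreducibility of $L$: every $\Q$-irreducible factor of the characteristic polynomial of $L$ realizes both $\rho_{\max}$ and $\rho_{\min}$, so the finest nontrivial $L$-invariant rational subspaces already see the extreme exponents. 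This is what lets one control the full derivative cocycle from information carried only by the fastest and slowest directions.

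Next I would show $H$ is $C^\infty$ along $\w^u_f$ (and symmetrically along $\w^s_f$ by considering $f^{-1}$). The restriction $H|_{\w^u_f}$ intertwines $f|_{\w^u_f}$ with the linear expanding map $L|_{E^u}$. I would first treat the top stratum: inside $\E^u_f$ the subbundle corresponding to the eigenvalues of modulus $\rho_{\max}$ is the fastest-expanding one, it has $C^\infty$ (indeed linear, after the conjugacy) leaves for $L$, and $H$ restricted to this strong unstable foliation satisfies a linearization equation one can iterate: writing $f = H^{-1} L H$ and differentiating along the leaf, the transfer-operator/telescoping estimates along the strongly contracting direction (for $L^{-1}$) give a geometric series whose sum is smooth, yielding $C^\infty$ regularity of $H$ along $\w^{uu}_f$. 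One then runs an induction on the stratification of $\E^u_f$ by exponent, at each stage combining the already-known smoothness along the faster sub-foliation with the cohomological equation for $DH$ on the current stratum; here very weak irreducibility is used to guarantee that the intermediate invariant foliations that need to be handled do not introduce new Diophantine obstructions, because the extreme exponents of every algebraic factor match those of $L$. This replaces the KAM scheme of \cite{KSW} and the $\T^3$ translation analysis of \cite{G17} by a direct, iteration-based argument valid globally.

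Once $H$ is $C^\infty$ along both $\w^s_f$ and $\w^u_f$, I invoke the Journé regularity lemma: a function smooth along two transverse foliations with uniformly smooth leaves whose tangent distributions span $T\T^d$ is globally $C^\infty$. Since $f$ is Anosov, $\E^s_f \oplus \E^u_f = T\T^d$, and the leaves of $\w^{s/u}_f$ are $C^\infty$ (their smoothness follows from smoothness of $f$ together with the established regularity of $H$, or from the standard fact that stable/unstable leaves of a $C^\infty$ Anosov diffeomorphism are $C^\infty$ immersed submanifolds). This yields $H \in C^\infty$. Finally, by the remark following \eqref{Conj def}, any other conjugacy differs from $H$ by an affine automorphism of $\T^d$ commuting with $L$, hence is also $C^\infty$, which gives the full statement.

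The main obstacle I anticipate is the inductive step across the intermediate strata of $\E^u_f$: a priori the leaves of the intermediate invariant foliations are only $C^{1+\text{H\"older}}$, so one cannot naively run the telescoping argument stratum by stratum. The key technical content will be to show that very weak irreducibility forces the relevant cohomological equations — for $DH$ and its higher derivatives along an intermediate stratum, with coefficients built from the already-smooth faster directions — to be solvable with $C^\infty$ solutions, essentially because the obstruction to solving a cohomological equation over the linear model lives in directions controlled by the extreme exponents, which the hypothesis pins down. Making this precise, and correctly organizing the simultaneous induction on the exponent stratification and on the order of differentiation, is where the real work lies.
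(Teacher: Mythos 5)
Your high-level outline (improve regularity along the stable and unstable foliations, induct over Lyapunov strata, finish with a transversality theorem) matches the paper's skeleton, but the one concrete mechanism you propose does not work, and the paper's actual technical core is absent. Differentiating the convergent series $h^u = \sum_{k \geq 0} L_u^{-k-1}(R_u \circ f^k)$ along $\w^u$ produces terms $L_u^{-k-1}\, DR_u \cdot Df^k|_{\w^u}$ whose norms grow geometrically whenever there is more than one unstable rate: $L_u^{-k}$ contracts only at the slowest unstable rate while $Df^k|_{\w^u}$ expands at the fastest, so the ``telescoping'' series does not converge and does not yield smoothness along $\w^{uu}$ or anything else. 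The paper instead uses the divergent-in-$C^0$ representation $h_i = -\sum_{k\geq 1} L_i^{k-1}(R_i \circ f^{-k})$ over negative iterates: after one derivative along the fast subfoliation $\w^{i,\ell}$, the contraction of $Df^{-k}|_{\w^{i,\ell}}$ marginally balances the growth of $L_i^{k-1}$, and distributional convergence is extracted by pairing against test functions and invoking exponential mixing of toral automorphisms. The output is $L^2$ control of $D^m_{E^u}\partial_{x_k}(h_i \circ \Gamma_i^{-1})$, but only when the inner derivative $\partial_{x_k}$ is along $E^{i,\ell}$.

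The step you flag as ``where the real work lies'' --- removing the constraint that one derivative be along the fast direction --- is precisely the paper's key new idea and is not a cohomological-equation argument of the kind you sketch. Very weak irreducibility enters only once, via Katznelson's lemma: it forces $(E^{i,\ell})^\perp \cap \Z^d = \{0\}$ and hence a Diophantine lower bound for $E^{i,\ell}$, and this, through a fractional-derivative decomposition of test functions in $\mathcal{H}^\beta$, upgrades ``$L^2$ derivatives of all orders with the first one along $E^{i,\ell}$'' to ``$L^2$ derivatives of all orders along all of $E^u$.'' Your diagnosis that the intermediate foliations are only $C^{1+\alpha}$ is correct, but the paper does not resolve that obstruction --- it sidesteps it entirely by never working along an intermediate foliation, instead using a global chart $\Gamma_i$ and the Diophantine estimate. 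Two further gaps: the scheme requires first promoting $H$ from $C^1$ to $C^{1+\alpha}$, which the paper does by viewing $DH$ as a conjugacy of linear cocycles and invoking the cocycle rigidity result from \cite{KSW}; and the final gluing is not classical Journ\'e (which would need continuous derivatives along the foliations) but de la Llave's $L^2$-Sobolev regularity theorem \cite{L01}, because what is actually produced at each stage is $L^2$ control of weak derivatives, not pointwise smoothness of $H$ along $\w^u$.
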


Of course $f$ in the theorem is Anosov, but we do not assume that it is close to $L$.
For a $C^1$-small perturbation of $L$, certain weak differentiability of $H$ implies that $H$ is $C^{1+\text{H\"older}}$  \cite[Theorem 1.1]{KSW}, and so we obtain the following corollary  for the local setting.

\begin{corollary} \label{corollaryA}
Let $L$ be a very weakly irreducible hyperbolic automorphism of $\T^d$ and let $f$ be a $C^\infty$  diffeomorphism of $\T^d$  which is $C^1$-close to $L$. If for some conjugacy $H$ between $f$ and $L$ either $H$ or $H^{-1}$ is Lipschitz, or more generally is in a Sobolev space $W^{1,q}(\T^d)$ with $q>d$, then any conjugacy between $f$ and $L$ is a $C^\infty$ diffeomorphism. 
\end{corollary}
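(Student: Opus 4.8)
The plan is to deduce this from Theorem \ref{mainA}, so the entire task is to upgrade the hypothesis "$H$ or $H^{-1}$ lies in $W^{1,q}(\T^d)$ with $q>d$" to the hypothesis "$H$ is a $C^1$ diffeomorphism" needed by the theorem. Once $H$ is known to be $C^1$ (equivalently $C^{1+\text{H\"older}}$), Theorem \ref{mainA} immediately gives that $H$, and hence every conjugacy, is $C^\infty$. So the corollary is really a regularity bootstrap: from a weak Sobolev differentiability of one of $H, H^{-1}$ to genuine $C^1$ regularity of $H$, and this is precisely the content cited as \cite[Theorem 1.1]{KSW}.

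First I would record the normalization: by the Franks–Manning theory and \cite{W}, any two conjugacies between $f$ and $L$ differ by an affine automorphism of $\T^d$ commuting with $L$, hence have the same regularity; so it suffices to treat one conjugacy $H$, and we may assume it is the one that (together with, possibly, its inverse) has the assumed Sobolev regularity. Next, since $f$ is $C^1$-close to $L$, it is Anosov with a splitting $T\T^d = \E^s \oplus \E^u$ close to $E^s \oplus E^u$, and $H$ maps the stable/unstable foliations of $f$ to those of $L$. The key input from \cite[Theorem 1.1]{KSW} is that for such a small perturbation, if $H$ (or $H^{-1}$) has even a weak degree of differentiability — Lipschitz, or more generally $W^{1,q}$ with $q>d$, so that by Morrey's embedding the relevant map has a Hölder-continuous representative and an a.e.-defined derivative that is nondegenerate on a positive-measure set — then in fact $H$ is a $C^{1+\text{H\"older}}$ diffeomorphism. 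I would invoke this verbatim rather than reprove it.

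The one point needing a sentence of care is the asymmetry between $H$ and $H^{-1}$: if it is $H^{-1}$ rather than $H$ that is assumed to be Lipschitz (or $W^{1,q}$), one applies \cite[Theorem 1.1]{KSW} with the roles of $f$ and $L$ interchanged — note $L$ is $C^1$-close to $f$ as well, and $H^{-1}$ conjugates $L$ to $f$ — to conclude $H^{-1}$ is $C^{1+\text{H\"older}}$, whence $H = (H^{-1})^{-1}$ is a $C^{1+\text{H\"older}}$ diffeomorphism by the inverse function theorem once we know the derivative of $H^{-1}$ is everywhere invertible (which is part of the cited conclusion). Either way we arrive at: $H$ is a $C^1$ diffeomorphism. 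Then Theorem \ref{mainA} applies directly and yields that $H$, and therefore every conjugacy between $f$ and $L$, is $C^\infty$.

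The main obstacle is entirely bundled inside the cited \cite[Theorem 1.1]{KSW}: the genuine work is showing that weak ($W^{1,q}$, $q>d$) differentiability of the conjugacy self-improves to $C^{1+\text{H\"older}}$ — this uses that the conjugacy intertwines the hyperbolic dynamics, so differentiability along one foliation propagates and the a.e. derivative, being equivariant, cannot degenerate, combined with a Journé-type argument to assemble Hölder regularity along the stable and unstable foliations into joint Hölder regularity of $DH$. Since that theorem is already established and quoted, the corollary itself is a short deduction; the only thing to get right is the bookkeeping between $H$ and $H^{-1}$ and the invocation of Theorem \ref{mainA}.
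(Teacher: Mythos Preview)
Your proposal is correct and follows essentially the same approach as the paper: invoke \cite[Theorem 1.1]{KSW} to upgrade the weak differentiability assumption to $C^{1+\text{H\"older}}$, then apply Theorem~\ref{mainA}. One minor remark: your treatment of the $H^{-1}$ case by swapping the roles of $f$ and $L$ is slightly awkward, since \cite[Theorem 1.1]{KSW} is formulated for perturbations of the automorphism $L$ rather than of an arbitrary Anosov $f$; in fact that theorem already covers both $H$ and $H^{-1}$ directly, so no role-reversal is needed.
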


\subsection{Regularity of conjugacy for ergodic partially hyperbolic automorphisms.} $\;$\\
 A toral automorphism $L$ is called {\em partially hyperbolic} if the matrix has some (but not all)  eigenvalues on the unit circle. It is ergodic  if and only if none of its eigenvalues is a root of unity.
In contrast to the hyperbolic case, there is no topological classification of partially hyperbolic toral diffeomorphisms. The presence of neutral directions yields that even a small 
perturbation of $L$ may not be topologically conjugate to it. However, if a conjugacy between $L$ 
and a diffeomorphism $f$ exists, the question of its regularity  is interesting. As in the hyperbolic case, 
any two conjugacies  differ by an affine automorphism 
of $\T^d$ commuting with $L$, and so have the same regularity. 

Prior techniques of showing smoothness of a conjugacy along contracting and expanding foliations do not extend to the neutral foliations. Our methods, however, adapt well to the partially hyperbolic case, 
and we establish regularity of a conjugacy between $L$ and  $f$ without any closeness assumption. 

\begin{theorem} \label{PH}
Let $L$ be a very weakly irreducible partially hyperbolic ergodic automorphism of $\T^d$.
If a $C^\infty$ diffeomorphism $f$ of $\T^d$ is $C^{1+\text{H\"older}}$ conjugate to $L$, 
then any conjugacy between $f$ and $L$ is a $C^\infty$ diffeomorphism. 
\end{theorem}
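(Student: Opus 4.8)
The plan is to adapt the strategy used for the hyperbolic Theorem~\ref{mainA} to the partially hyperbolic setting, exploiting the fact that the very weak irreducibility hypothesis guarantees that \emph{every} rational factor of $L$ still carries the extremal Lyapunov exponents $\rho_{\max}$ and $\rho_{\min}$, and that ergodicity (no root of unity) allows us to treat the neutral directions via Diophantine estimates rather than hyperbolic contraction. First I would reduce the problem to a statement about the action of $L$ on the dual lattice: writing $f = L + \varphi$ with $\varphi:\T^d\to\R^d$ a $C^\infty$ map and $H = \id + h$ with $h$ of class $C^{1+\text{H\"older}}$, the conjugacy equation $L\circ H = H\circ f$ becomes the twisted cohomological equation $L h - h\circ f = \varphi\circ H$, equivalently $h - L^{-1}(h\circ f) = -L^{-1}(\varphi\circ H)$. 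Since the right-hand side is already $C^\infty$ (composition of the $C^\infty$ map $\varphi$ with the at-least-$C^1$ map $H$ makes it continuous; bootstrapping along the already-known regular directions makes the right side progressively smoother once $h$ is), the task is to run a smoothing/bootstrap on $h$.

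The key steps, in order, would be: (i) establish smoothness of $h$ along the stable and unstable foliations $\w^s,\w^u$ of $f$ by the classical Livshitz/de la Llave--Marco--Moriy\'on regularity results along contracting and expanding leaves, exactly as in the hyperbolic case, giving $C^\infty$ regularity of $h$ transversally to the neutral bundle; (ii) handle the neutral direction $\E^0$ --- this is where the new input is needed --- by passing to Fourier coefficients $\hat h(n)$, $n\in\Z^d$, and using the conjugacy equation in the form $\hat h$ composed with the transfer operator; grouping $\Z^d$ into $L^{T}$-orbits, one gets for each orbit a telescoping relation that expresses high-frequency behaviour of $h$ in terms of the (smooth, hence rapidly decaying) data $\varphi\circ H$, where the crucial point is that along any $L^{T}$-orbit intersecting the "neutral" frequencies the iterates are eventually pushed into the expanding cone in frequency space because every rational factor has a root of modulus $\rho_{\max}>1$; (iii) combine (i) and (ii) with an interpolation/elliptic-regularity argument (or a Journ\'e-type lemma along the three foliations $\w^s,\w^u,\w^0$, once $C^\infty$ regularity along each is known) to conclude $h\in C^\infty$; (iv) upgrade from "$H$ is $C^\infty$" to "\emph{any} conjugacy is $C^\infty$" using the stated fact that two conjugacies differ by an affine automorphism commuting with $L$.

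I expect the main obstacle to be step (ii): controlling the cohomological equation over the neutral foliation $\w^0$. Unlike the hyperbolic directions, $\w^0$ has no contraction or expansion to force convergence of the formal solution, so one cannot simply integrate along leaves. Instead one must exploit that $f$ restricted to (a lift of) the neutral direction is a small perturbation of a Diophantine translation/automorphism --- ergodicity of $L$ rules out roots of unity, which gives a Diophantine lower bound $\|L^k v\| \gtrsim |k|^{-C}$ type estimate for the relevant rotation numbers --- and then solve the twisted equation modulo the already-known smooth hyperbolic part. The delicate point is that the "twist" $\varphi\circ H$ only becomes smooth \emph{after} we know $h$ is smooth along $\w^0$, so the argument must be set up as a genuine bootstrap: assuming $h\in C^{k+\alpha}$ one proves the right-hand side is $C^{k+\alpha}$ hence, via the frequency-orbit analysis together with the Diophantine estimate, $h\in C^{k+1+\alpha}$ along $\w^0$; then Journ\'e's lemma promotes this to global $C^{k+1+\alpha}$, and iterating in $k$ gives $C^\infty$. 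A secondary technical issue is the base case of the induction: going from the hypothesis $H\in C^{1+\text{H\"older}}$ (as opposed to $C^1$ in the hyperbolic theorem) is exactly what is needed to get the bootstrap started, since one needs a definite H\"older exponent to control the composition $\varphi\circ H$ and to invoke the regularity theorems along $\w^s$ and $\w^u$.
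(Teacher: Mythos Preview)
Your proposal misidentifies where the main difficulty lies and contains an error in the basic setup. First, the conjugacy equation $L\circ H = H\circ f$ with $H=\Id+h$ and $f=L+R$ gives $Lh - h\circ f = R$, not $R\circ H$; the right-hand side is $C^\infty$ from the outset and no bootstrap on it is needed. More importantly, you treat smoothness along the hyperbolic foliations (your step (i)) as classical via de~la~Llave--Marco--Moriy\'on, and the neutral direction (step (ii)) as the new hard part. In the paper the roles are reversed. Global $C^\infty$ smoothness of the components $h^s,h^u$ on $\T^d$ is \emph{not} classical in higher dimensions: the intermediate Lyapunov subfoliations have only $C^{1+\text{H\"older}}$ leaves, so leafwise arguments cannot bootstrap, and the proof must reuse verbatim the machinery built for Theorem~\ref{mainA} --- the series over negative iterates, exponential mixing, the global chart $\Gamma_i$, and crucially the Diophantine property of the fast subspace $E^{i,\ell}$ coming from very weak irreducibility (Propositions~\ref{L^2} and~\ref{Dioph}). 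This is where the irreducibility hypothesis is actually used.

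By contrast, the center component $h^c$ turns out to be the \emph{easier} part, and very weak irreducibility plays no role there. Since $H^c=\Id^c+h^c$ is locally constant along the leaves of $\w^{su}$, one gets $h^c$ uniformly $C^\infty$ along $\w^{su}$ for free. Along $\w^c$ the twist $L_c=L|_{E^c}$ has eigenvalues of modulus one and hence only polynomial growth, so the distributional series $-\sum_{k\ge1} L_c^{k-1}(R_c\circ f^{-k})$ paired with $D^m_{E^c}\psi$ is controlled directly by exponential mixing of the ergodic automorphism $L$: no extra ``balancing'' derivative $\partial_{x_j}$ is required, and hence no Diophantine recovery step (Proposition~\ref{Dioph}) is needed. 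Your proposed $L^T$-orbit Fourier analysis and Diophantine-rotation argument for the center are therefore neither what the paper does nor necessary. Assembly of the pieces is via de~la~Llave's $L^2$ Sobolev regularity theorem \cite{L01} rather than a Journ\'e-type lemma.
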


To the best of our knowledge, this is the first bootstrap of regularity result for the partially hyperbolic case. 
In this theorem we assume $C^{1+\text{H\"older}}$ regularity of $H$, which is needed for our approach. 
In contrast to  the hyperbolic case, we do not know whether $C^1$ regularity implies 
$C^{1+\text{H\"older}}$. This lack of H\"older estimates is one of the reasons why KAM-type iterative argument in \cite{KSW} did not work in  the partially hyperbolic case.

\vskip.2cm

\subsection{Applications: improving regularity of conjugacy in prior rigidity results.} $\;$\\
Applying Theorems \ref{mainA} and \ref{PH} we improve the regularity of conjugacy 
from $C^{1+\text{H\"older}}$ to $C^\infty$ in local and global rigidity results for irreducible 
hyperbolic and partially hyperbolic toral automorphisms.  For a large class of toral automorphisms,
these corollaries describe diffeomorphisms which are $C^\infty$ conjugate to them.

The first result is global and is called $C^\infty$ periodic data rigidity of $L$. 
It follows from Theorem  \ref{mainA} and $C^{1+\text{H\"older}}$ periodic data rigidity,
which was recently obtained in \cite{DWG} as the globalization of the
corresponding local result in \cite{GKS11}. The only prior results on $C^\infty$ periodic 
data rigidity were obtained for automorphisms of $\T^3$ in \cite[Theorem 1.1]{DWG}
and for some automorphisms conformal on full stable and unstable bundles \cite{L02,L04,KS09}.

\begin{corollary} {\em(of Theorem  \ref{mainA} and \cite[Proposition 1.6]{DWG})} \label{PD} $\;$\\
Let $L:\T^d\to\T^d$ be an irreducible hyperbolic automorphism
such that no three of its eigenvalues have the same modulus.
Let  $f$ be a $C^\infty$ Anosov diffeomorphism of $\T^d$ in the homotopy class of $L$ such that the
derivative $D_pf^n$ is conjugate to $L^n$ whenever $p=f^n(p)$.
Then $f$ is $C^{\infty}$ conjugate to $L$.
\end{corollary}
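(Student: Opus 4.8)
The plan is to combine the $C^{1+\text{H\"older}}$ periodic data rigidity of \cite[Proposition 1.6]{DWG} with the bootstrap provided by Theorem \ref{mainA}, so the argument is essentially a two-step chain of citations once the hypotheses are matched up.

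First I would produce a conjugacy and argue that it is $C^{1+\text{H\"older}}$. Since $f$ is Anosov and lies in the homotopy class of $L$, it induces $L$ on $H_1(\T^d,\Z)$, so by the Franks--Manning classification there is a topological conjugacy $H$ with $L\circ H=H\circ f$, unique up to an affine automorphism of $\T^d$ commuting with $L$. Now the standing hypotheses --- $L$ irreducible, no three of its eigenvalues of the same modulus, and $D_pf^n$ conjugate to $L^n$ whenever $p=f^n(p)$ --- are precisely those of \cite[Proposition 1.6]{DWG}, the global version of the local result \cite{GKS11}. Applying it, the conjugacy $H$ is $C^{1+\text{H\"older}}$; in particular $H$ is a $C^1$ diffeomorphism.

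Second I would invoke Theorem \ref{mainA}. Since $L$ is irreducible, its characteristic polynomial has a single $\Q$-irreducible factor, namely itself, and that factor trivially has a root of modulus $\rho_{\max}$ and a root of modulus $\rho_{\min}$; hence $L$ is very weakly irreducible. As $f$ is $C^\infty$ and is conjugate to $L$ by the $C^1$ diffeomorphism $H$, Theorem \ref{mainA} yields that every conjugacy between $f$ and $L$, and in particular $H$ itself, is a $C^\infty$ diffeomorphism. Thus $f$ is $C^\infty$ conjugate to $L$.

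The main obstacle in the argument is entirely absorbed by the cited input: passing from a merely bi-H\"older topological conjugacy to a $C^{1+\text{H\"older}}$ one by matching periodic data is the delicate part, and this is where the ``no three eigenvalues of the same modulus'' assumption enters, since it is used in \cite{DWG,GKS11} to control the coarse Lyapunov subbundles and rule out resonance obstructions. Once $C^1$ regularity (even just membership in $W^{1,q}$ with $q>d$, in view of Corollary \ref{corollaryA}) is available, Theorem \ref{mainA} does the remaining work with no further hypotheses, so the corollary requires no new estimates of its own.
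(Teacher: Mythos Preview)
Your proposal is correct and follows exactly the approach indicated in the paper: apply \cite[Proposition 1.6]{DWG} to obtain a $C^{1+\text{H\"older}}$ conjugacy, note that irreducibility implies very weak irreducibility, and then invoke Theorem \ref{mainA} to upgrade to $C^\infty$. The paper does not give a separate proof of this corollary beyond pointing to this combination, so your write-up is a faithful elaboration.
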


We also obtain improvements in local Lyapunov spectrum rigidity results.

\begin{corollary} {\em (of Theorem  \ref{mainA} and  \cite[Theorem 1.1]{GKS20})} \label{local LS}$\;$\\
Let $L:\T^d\to\T^d$ be an irreducible hyperbolic automorphism such that no three of
its eigenvalues have the same modulus and there are no pairs of eigenvalues
of the form $\lambda, -\lambda$ or $i\lambda, -i\lambda$, where $\lambda\in \R$.
Let  $f$ be a volume-preserving $C^\infty$ diffeomorphism of $\T^d$ sufficiently
$C^{1}$-close to $L$. If the Lyapunov exponents of $f$ with respect to the volume
are the same as the Lyapunov exponents of $L$, then $f$ is $C^{\infty}$ conjugate to $L$.
\end{corollary}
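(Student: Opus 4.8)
\emph{Proof proposal.} The plan is to derive this corollary purely by combining the local Lyapunov spectrum rigidity of \cite[Theorem 1.1]{GKS20} with Theorem~\ref{mainA}; no new argument is needed, only a check that the hypotheses line up.

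First I would record the topological setup. Since $f$ is $C^1$-close to the Anosov automorphism $L$, it is itself Anosov and lies in the homotopy class of $L$, so by the Franks--Manning classification there is a homeomorphism $H$ of $\T^d$ with $L\circ H=H\circ f$ as in \eqref{Conj def}; moreover any other conjugacy differs from $H$ by an affine automorphism of $\T^d$ commuting with $L$, hence has the same regularity. Therefore it suffices to prove that this one conjugacy $H$ is $C^\infty$. Next I would invoke \cite[Theorem 1.1]{GKS20}: its hypotheses are exactly those assumed here — $L$ irreducible hyperbolic with no three eigenvalues of equal modulus and no eigenvalue pairs of the form $\lambda,-\lambda$ or $i\lambda,-i\lambda$ with $\lambda\in\R$, $f$ a volume-preserving $C^\infty$ diffeomorphism sufficiently $C^1$-close to $L$, and the Lyapunov exponents of $f$ with respect to volume equal to those of $L$ — and its conclusion is that $H$ is smooth along each coarse Lyapunov foliation of $L$, which assembles to global $C^{1+\text{H\"older}}$ regularity of $H$. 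In particular $H$ is a $C^1$ diffeomorphism.

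Finally I would apply Theorem~\ref{mainA}. Because $L$ is irreducible, its characteristic polynomial is its only factor over $\Q$, and that single factor trivially has a root of modulus $\rho_{\max}$ and a root of modulus $\rho_{\min}$; hence $L$ is very weakly irreducible. Since $f$ is $C^\infty$ and is conjugate to $L$ by the $C^1$ diffeomorphism $H$ produced in the previous step, Theorem~\ref{mainA} yields that every conjugacy between $f$ and $L$ — in particular $H$ — is a $C^\infty$ diffeomorphism, which is the assertion. The only point requiring a moment's care is verifying that the regularity delivered by \cite[Theorem 1.1]{GKS20} meets the input hypothesis of Theorem~\ref{mainA}; this is immediate, as $C^{1+\text{H\"older}}$ is stronger than the $C^1$ hypothesis needed in the hyperbolic case. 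All the genuine difficulty lives inside Theorem~\ref{mainA} and the cited spectrum rigidity result, not in this deduction.
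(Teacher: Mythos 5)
Your proof is correct and follows exactly the route the paper intends: apply \cite[Theorem 1.1]{GKS20} under the stated hypotheses to obtain a $C^{1+\text{H\"older}}$ (hence $C^1$) conjugacy, observe that irreducibility of $L$ trivially implies very weak irreducibility, and then invoke Theorem~\ref{mainA} to upgrade to $C^\infty$. The hypothesis check is exactly the content of the corollary, and you have carried it out accurately.
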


A similar result in a partially hyperbolic setting uses stronger assumptions on the spectrum of $L$ 
and on closeness of $f$ to $L$. A toral automorphism $L$ is called {\em totally irreducible} if 
$L^n$ is irreducible for all $n\in \N$. Such $L$ is ergodic.

\begin{corollary} {\em (of Theorem  \ref{PH} and  \cite[Theorem 1.5]{GKS20})} \label{partial} $\;$\\
Let $L:\T^d\to\T^d$ be a totally irreducible automorphism with exactly two eigenvalues 
of modulus one and simple real eigenvalues away from the unit circle.
Let  $f$ be a volume-preserving $C^\infty$ diffeomorphism sufficiently $C^{N}$-close to $L$,
where $N=5$ if $d >4$ and $N= 22$ if $d=4$. 
If the Lyapunov exponents of $f$ with respect to the volume are the same as the Lyapunov 
exponents of $L$ then $f$ is $C^{\infty}$ conjugate to $L$.
\end{corollary}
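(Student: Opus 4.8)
The plan is to simply compose the two cited inputs. First I would invoke \cite[Theorem 1.5]{GKS20}: under the hypotheses on $L$ (totally irreducible, exactly two eigenvalues of modulus one, simple real eigenvalues away from the unit circle) and on $f$ (volume preserving, $C^\infty$, sufficiently $C^N$-close to $L$ with $N=5$ for $d>4$ and $N=22$ for $d=4$), together with the matching of Lyapunov exponents with respect to volume, that result produces a conjugacy $H$ between $f$ and $L$ of regularity $C^{1+\text{H\"older}}$. I would first verify that the spectral hypotheses here imply the ones needed in \cite{GKS20} and, crucially, that a totally irreducible $L$ with exactly two unit eigenvalues and simple real eigenvalues off the circle is very weakly irreducible: since $L$ is totally irreducible its characteristic polynomial is irreducible over $\Q$, so it has a single $\Q$-factor, and that factor trivially contains a root of modulus $\rho_{\max}$ and one of modulus $\rho_{\min}$. (One should note that $\rho_{\min}=\rho_{\max}=1$ cannot happen here because the automorphism is partially hyperbolic by assumption — it has eigenvalues off the unit circle — so the hyperbolic part is nonempty.) Total irreducibility also gives that $L$ is ergodic, as remarked in the text preceding the corollary, so $L$ is a very weakly irreducible partially hyperbolic ergodic automorphism.

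Next I would apply Theorem \ref{PH} to this $L$ and $f$, with the conjugacy $H$ of regularity $C^{1+\text{H\"older}}$ supplied by the previous paragraph. Theorem \ref{PH} then upgrades $H$ — and in fact any conjugacy between $f$ and $L$, since all conjugacies differ by an affine automorphism of $\T^d$ commuting with $L$ and hence share the same regularity — to a $C^\infty$ diffeomorphism. This yields the conclusion that $f$ is $C^\infty$ conjugate to $L$.

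The only real content beyond quoting the two results is the bookkeeping check that the hypotheses of \cite[Theorem 1.5]{GKS20} are met and that they force very weak irreducibility and ergodicity of $L$; I expect this to be routine, the main (minor) point being to confirm that "totally irreducible with exactly two unit eigenvalues and simple real eigenvalues off the circle" is subsumed by whatever spectral genericity \cite{GKS20} requires and that the closeness threshold $N$ is the one stated there. No new estimates or dynamical arguments are needed: Corollary \ref{partial} is a formal consequence of Theorem \ref{PH} and the cited $C^{1+\text{H\"older}}$ local rigidity theorem.
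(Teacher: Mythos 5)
Your proposal is correct and is exactly the intended argument: the paper gives no separate proof of Corollary~\ref{partial}, presenting it as a formal composition of \cite[Theorem 1.5]{GKS20} (which supplies the $C^{1+\text{H\"older}}$ conjugacy) with Theorem~\ref{PH}, and your verification that total irreducibility forces very weak irreducibility (a single $\Q$-factor of the characteristic polynomial trivially contains roots of moduli $\rho_{\max}$ and $\rho_{\min}$) and ergodicity is the right bookkeeping check.
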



\subsection{Proof strategy.}

We reduce the conjugacy equation \eqref{Conj def} to a  cohomological equation over $f$
with hyperbolic twist $L$
\begin{align*}\label{twist h}
 L\, h - h\circ  f=R ,
\end{align*}
where $h=H-I$ and $R=f-L$ can be viewed as functions from $\T^d$ to $\R^d$. Projecting this 
equation to a Lyapunov subspace $E^\rho \subset E^u$  for $L$ 
we obtain 
\begin{equation} \label{h rho}
 L_\rho h_\rho - h_\rho \circ f= R_\rho, \quad \text{where } L_\rho=L|_{E^\rho}.
\end{equation}
This is a fixed point equation on $h_\rho$  for an affine contraction on the space $C^0(\T^d,E^\rho)$. 
It follows that $h_\rho$ is the unique fixed points given by series 
\begin{equation} \label{series}
h_\rho =  \sum_{k=0}^\infty L_\rho^{-k-1} (R_\rho \circ f^k). 
\end{equation}
Term-wise differentiation can be used to show that $h_\rho$ is $C^\infty$ along the leaves of $\w^s$.
However, regularity of $h_\rho$  along $\w^u$ cannot be analyzed directly, as the series of derivatives diverge since $Df^k$ grows exponentially.
  Our approach relies on estimating distributional derivatives of $h_\rho$ along $\w^u$ 
and showing that weak derivatives of all orders are in $L^2(\T^d)$. 
We use some techniques developed in \cite{FKS} in the context of $\Z^k$-actions to study
regularity of conjugacy along the {\em neutral} foliation of $f$, rather than the unstable one.
To analyze regularity of $h_\rho$ along $\w^u$, instead of \eqref{series} we consider a different representation 
using {\em negative} iterates of $f$,
\begin{equation} \label{series-}
h_\rho=-\sum_{k=1}^\infty L_\rho^{k-1} (R_\rho \circ f^{-k}).
\end{equation}
However, this series does not converge in $C^0(\T^d,E^\rho)$. 
The key element of our approach is to differentiate it once along the corresponding fast 
subfoliation $\w$ of $\w^u$. Such derivatives $D^1_\w h_\rho$ converge in distributional sense as the contracting $Df^{-k}$ balances the expanding twist $L_\rho^{k-1}$. Then we differentiate further and show
that all distributional derivatives of $D^1_\w h_\rho$ along {\em all} of $\w^u$ are  in $L^2(\T^d)$.

Now we need to obtain weak differentiability of $h_\rho$ from that of $D^1_\w h_\rho$. 
This is the step where we use global information given by very weak irreducibility of $L$.
We recall that the bootstrap fails in general for reducible $L$. 
In particular, the regularity of  $h_\rho$ transverse to $\w$ cannot be recovered locally, even if  
$h_\rho$ is constant along $\w$.
In order to use global structure of $L$ to study nonlinear $f$,  we construct a {\em global} coordinate chart for the foliation $\w^u$ and the  
fast part $\w$. This allows us to use a suitable version of Diophantine property for the fast linear
foliation of $L$, which we obtain from very weak irreducibility. Using this Diophantine property
we establish a regularity result that allows us to recover sufficient information about all weak
derivatives of $h_\rho$ along $\w^u$  from that of $D^1_\w h_\rho$.

Our techniques are also well-adapted to treating the partially hyperbolic case, so that bootstrap from
$C^{1+\text{H\"older}}$ to $C^\infty$ requires only minor changes compared to hyperbolic one.




\section{Proof of Theorem \ref{mainA}} 

Since $L$ and hence $f$ are hyperbolic, we have hyperbolic splittings for $L$ and  $f$
\begin{align}\label{Hsplit}
\R^d=E^s\oplus E^u \quad \text{for $L$}\qquad \text{and } \quad \R^d=\E_x^s\oplus  \E_x^u \quad \text{for $f$}.
\end{align}
The subspace $E^{s/u}$ is the sum of all generalized eigenspaces of $L$ corresponding to 
eigenvalues of moduli less/grater than $1$.  The stable and unstable subbundles 
$\E^s$ and $\E^u$ of $T\T^d$ are tangent to $f$-invariant topological foliations $\w^s$ and $\w^u$
with {\em uniformly $C^\infty$ leaves}. This means that the individual leaves are $C^\infty$ immersed
manifolds with the derivatives of any order continuous on $T\T^d$.
For any continuous conjugacy $H$ as in \eqref{Conj def} we have 
\begin{align}\label{for:186}
 H(\w^{s})=W^{s}\; \text{ and }\;H(\w^{u})=W^{u},
\end{align}
where $W^s$ and $W^u$ are the linear foliations defined by $E^s$ and $E^u$.

\subsection{The conjugacy $H$ is $C^{1+\text{H\"older}}$} 
First we show that the conjugacy $H$ in Theorem \ref{mainA} is in fact $C^{1+\a}$ for some $\a>0$.
Since $H$ is $C^1$ we can differentiate the conjugacy equation $L\circ H= H \circ f$ and obtain 
$$L \circ DH(x)= DH(fx) \circ Df(x).
$$
Thus $C(x)=DH(x)$ is a continuous conjugacy between the derivative cocycles $L$ and $Df$, 
that is, 
$$L = C(fx) \circ Df(x) \circ C(x)^{-1}.
$$
We consider the Lyapunov splitting $\R^d=\oplus_{\rho\in \Delta} E_{\rho}$ for $L$, 
where $\Delta$ is the set of moduli  of eigenvalues of $L$ and $E_\rho$ is the sum of generalized eigenspaces of $L$ corresponding to the eigenvalues of modulus $\rho$.
Denoting $\E_\rho(x)= C^{-1}(x)E_{\rho}$ we obtain a continuous $Df$-invariant splitting $T\T^d=\oplus_{\rho\in \Delta} \E_{\rho}$ with the same expansion/contraction rates as for $L$. 
It is well known that such splitting is 
$\a$-H\"older with some exponent $\a>0$ which depends on $\Delta$. 
Then the restriction $C_\rho(x)=C(x)|_{\E_{\rho}(x)}$ is a continuous conjugacy between $\a$-H\"older
linear cocycles $L|_{E_{\rho}}$ and $Df|_{\E_{\rho}(x)}$ over $f$. Both cocycles have one Lyapunov exponent
$\log \rho$, and $L$ is clearly fiber bunched as the nonconformality $\|L^n\| \cdot \|L^{-n}\|$ grows at
most polynomially and hence is dominated by expansion/contraction along (un)stable manifolds.
Hence \cite[Theorem 2.1]{KSW} applies and yields that the conjugacy $C_\rho(x)$ is $\a$-H\"older for each $\rho$. It follows that $DH(x)=C(x)$ is $\a$-H\"older.

\subsection{General construction and properties of $H$} \label{for:116} 
 
While we do not assume that $H$ is close to the identity map, we may assume that $H$ 
is homotopic to the identity. Indeed, the induced map $H_*\colon \Z^d\to\Z^d$ on the first 
homology group of $\T^d$ is given by a matrix $A\in GL(d,\Z)$, which defines an automorphism 
of  $\T^d$. Replacing $f$ by $A\circ f \circ A^{-1}$ and $H$ by $H \circ A^{-1}$, we may 
assume that $H_*=\Id$. In particular, this yields that $f$ is in the homotopy class of $L$.
Since $L$ fixes $0$ we see that $f$ fixes $H^{-1}(0)$. Conjugating $f$ by the translation
$x\mapsto x+H^{-1}(0)$ we can also assume that $f(0)=L(0)=H(0)=0$ is a common fixed point for
$f$ and $L$. 

From now on we will assume that $H$ is in the homotopy class of the identity, $f$ is in the homotopy class of $L$, and they satisfy $H(0)=f(0)=0$. Under these assumptions, the conjugacy 
$H$ is unique and is given by the following construction. 
We lift $f$ and $H$ to $\R^d$ as 
$$\bar H=\Id+\bar h \quad\text{and}\quad \bar f=L+\bar R,$$
where $\bar h, \bar R:\R^d \to \R^d$ are $\Z^d$-periodic functions satisfying
$\bar R(0)=\bar h(0)=0$. The lifts satisfy the conjugacy equation 
$$
L\circ \bar H=\bar H\circ \bar f\quad\text{which yields}\quad L\circ \bar h - \bar h\circ \bar f=\bar R.
$$
 The latter equation projects to the  following equations for functions on $\T^d$
\begin{align}\label{for:78}
 L\circ h - h\circ  f=R \quad \text{or } \quad  h=L^{-1}\circ  h\circ  f+L^{-1} R,
\end{align}
where $h, R:\T^d \to \R^d$  satisfy $R(0)=h(0)=0$ and are as regular as $f$ and $H$ respectively.
Thus we have that $R$ is $C^{\infty}$ and $h$ is $C^{1+\a}$. 

Using the hyperbolic splitting $\R^d= E^s\oplus E^u$ for $L$ we define the stable and unstable 
projections $h^s,h^u,R_s,R_u$ of $h$ and $R$ respectively. Projecting the second equation in 
\eqref{for:78} to $E^u$ we obtain 
\begin{equation} \label{H_*}
h^u= L_u^{-1} (h^u\circ f)+L_u^{-1} R_u, \quad \text{where } L_u=L|_{E^u}.
\end{equation}
This is a fixed point equation for $h^u$ with the affine operator 
\begin{equation} \label{T*}
T_u (\psi )=L_u^{-1} (\psi \circ f)+L_u^{-1} R_u.
\end{equation}
Since $\|L_u^{-1}\|<1$, the operator $T_u$ is a contraction on the space $C^0(\T^d,E^u)$, 
and thus $h^u$ is its unique fixed point given by 
\begin{equation} \label{Hu}
h^u = \lim_{k\to \infty} T_u^k (0) = \sum_{k=0}^\infty L_u^{-k-1} (R_u \circ f^k).
\end{equation}
Similarly, one obtains the equation  for $h^s$ and the series $h^s=-\sum_{k=1}^\infty L_s^{k-1} (R_s \circ f^{-k}).$

\begin{lemma} \label{h^u s smooth}
The unstable component $h^u$ is uniformly $C^\infty$ along $\w^s$, that is, it has derivatives of all orders along the leaves of $\w^s$ which vary continuously on $\T^d$. Similarly, $h^s$  is uniformly $C^{\infty}$ along $\w^u$.
\end{lemma}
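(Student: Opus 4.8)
The statement is local along the leaves of $\w^s$, so it suffices to differentiate the series \eqref{Hu} term by term in directions tangent to $\w^s$ and check uniform convergence. The plan is to fix a leaf $\w^s(x)$, introduce smooth local coordinates on it (the leaves of $\w^s$ are uniformly $C^\infty$, so these coordinates and all their derivatives vary continuously on $\T^d$), and estimate the derivatives of the $k$-th term $L_u^{-k-1}(R_u\circ f^k)$. The $m$-th order derivative along $\w^s$ of $R_u\circ f^k$ is a universal polynomial expression (Faà di Bruno) in the derivatives of $R_u$ up to order $m$ and the derivatives of $f^k$ along $\w^s$ up to order $m$; since $f$ contracts $\w^s$ uniformly, the norm of $D^j_{\w^s}f^k$ is bounded by $C_j\theta^k$ for each $j\ge 1$ with $\theta<1$ (this is the standard bound for derivatives of iterates along a contracting foliation, obtained by induction on $j$ using the chain rule and uniform $C^\infty$ bounds on $f$). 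Hence $\|D^m_{\w^s}(R_u\circ f^k)\|_{C^0}\le C_m'\theta^k$, and multiplying by $\|L_u^{-k-1}\|\le C\mu^{-k}$ with $\mu=\rho_{\min}(L_u)>1$ gives a term bounded by $C_m''(\theta/\mu)^k$, which is summable. Therefore the series of $m$-th derivatives along $\w^s$ converges uniformly on $\T^d$, so $h^u$ has continuous derivatives of all orders along $\w^s$, i.e. it is uniformly $C^\infty$ along $\w^s$.

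A technical point to handle carefully is that $f$ does not preserve the coordinate charts, so "the derivative of $R_u\circ f^k$ along $\w^s$" must be interpreted intrinsically: one differentiates along $\w^s(x)$, lands on $\w^s(f^k x)$ via $f^k$, and differentiates $R_u$ there along the leaf. Concretely, one works with the restriction $f|_{\w^s}\colon \w^s(x)\to \w^s(fx)$ and uses that the leafwise derivatives $Df|_{\w^s}$, together with all higher leafwise derivatives of $f$, are continuous on $\T^d$ and that the contraction estimate $\|Df^k|_{\E^s}\|\le K\theta^k$ holds uniformly. Then an induction on the order $j$ shows that the $j$-th leafwise derivative of $f^k$, expressed via the chain rule as a sum of products of leafwise derivatives of $f$ evaluated along the orbit segment $x, fx,\dots, f^{k-1}x$, is bounded by $C_j\theta^k$: the top-order term carries a single factor $Df^k|_{\E^s}$ and all lower-order terms carry at least one such contracting factor, so the geometric decay survives.

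The argument for $h^s$ being uniformly $C^\infty$ along $\w^u$ is identical after replacing $f$ by $f^{-1}$, $\w^s$ by $\w^u$, and $L_u$ by $L_s$: from the series $h^s=-\sum_{k=1}^\infty L_s^{k-1}(R_s\circ f^{-k})$, the leafwise derivatives along $\w^u$ of $R_s\circ f^{-k}$ decay geometrically because $f^{-1}$ contracts $\w^u$ uniformly, while $\|L_s^{k-1}\|$ decays (or grows at most polynomially, and is in any case dominated), so again the differentiated series converges uniformly.

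The main obstacle is purely bookkeeping: controlling the higher leafwise derivatives of $f^k$ uniformly in $k$ via the Faà di Bruno / chain rule expansion and verifying that every term in the expansion retains at least one exponentially contracting factor $Df^j|_{\E^s}$ for some $1\le j\le k$. There is no genuine difficulty here — this is the standard mechanism by which contraction along a foliation upgrades $C^0$ convergence of such series to $C^\infty$ — and it is exactly the easy direction, in contrast to the regularity of $h^u$ along $\w^u$, which is the real content of the paper and requires the Diophantine input from very weak irreducibility.
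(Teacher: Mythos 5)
Your proof is correct and coincides with the paper's stated \emph{alternative} argument for this lemma. You differentiate the series \eqref{Hu} term by term along $\w^s$, use a Fa\`a di Bruno / chain-rule induction to bound the leafwise derivatives by $\|D^m_{\w^s}(R_u\circ f^k)\|\le C_m\theta^k$ with $\theta<1$ (this is exactly the second estimate in \eqref{D^mfest} of Lemma \ref{D^mf^n}(i)), and conclude that the differentiated series converges uniformly after multiplying by the decaying $\|L_u^{-k-1}\|$. That is precisely what the paper sketches in the second paragraph of its proof. The paper's \emph{primary} proof, though, is a shorter geometric observation that avoids the derivative bookkeeping entirely: since $H(\w^u)=W^u$ and $H(\w^s)=W^s$, the locally defined unstable component $H^u=\Id^u+h^u$ of $H$ is locally constant along each leaf of $\w^s$; hence on a leaf $h^u$ equals a constant minus the restriction of the linear projection $\Id^u$ to that leaf, and so is exactly as regular as the leaf, which is uniformly $C^\infty$. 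The geometric route is cleaner for this easy lemma; your analytic route has the advantage of being the same mechanism that is reused and sharpened later in the paper (Lemma \ref{D^mD} and Propositions \ref{L^2} and \ref{L^2c}), where no analogue of the flattening-by-$H$ trick is available, which is presumably why the paper records both.
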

\begin{proof}
This follows from \eqref{for:186}. Specifically, $H(\w^u)=W^u$ means that the locally defined unstable component of the conjugacy
$H^u=\Id^u+h^u$ is locally constant along the leaves of $\w^s$ and hence is as regular as these leaves.
Thus $h^u$ is uniformly $C^\infty$ along $\w^s$. 

Alternatively, this can be established by term-wise differentiation of the series \eqref{Hu}
as norms of the derivatives of $f^k$ along $\w^s$ decay exponentially. This is clear for 
$Df^k|_{\E^s}$, and for higher order derivatives this is given by  \eqref{D^mfest} 
in Lemma \ref{D^mf^n} (i).
\end{proof}

\subsection{Lyapunov splitting and smoothness of $h^u$.}
We will focus on the unstable component and show that $h^u$  is $C^{\infty}$ on $\T^d$. 
For this we consider the splitting of the unstable bundles for $L$ and $f$ into Lyapunov subspaces.  
Let $1<\rho_1 <\dots <\rho_\ell$ be the distinct moduli of the unstable eigenvalues of $L$ and let 
\begin{equation} \label{splitL}
E^u=E^{u,L}= E^1 \oplus E^2 \oplus \dots \oplus E^\ell
    \end{equation} 
be the corresponding splitting of $E^{u}$, where $E^i$ is the direct sum of generalized eigenspaces
corresponding to the eigenvalues with modulus $\rho_i$. Similarly to the above, we project the conjugacy equation  to this splitting.
We let $L_i=L|_{E^i}$ and denote by $h_i$ and $R_i$ the $E^i$ components of $h$ and $R$ respectively.  
 Then \eqref{for:78} yields
\begin{align}\label{for:123}
  L_{i}h_{i}(x)-h_{i}(f(x))=R_{i}(x),\qquad x\in\T^d.
\end{align}
As above, this equation has a unique solution given by the similar series 
$$
h_i=\sum_{k=0}^\infty L_i^{-k-1} (R_i \circ f^k).
$$
However, this series is still not useful for studying the regularity of $h_i$  along $\w^u$ as differentiating along $\w^u$ yields diverging series.

The main part of the proof is analyzing regularity of individual $h_i$ along $\w^u$ and showing 
inductively that they all are $C^{\infty}$ on $\T^d$. The inductive process is given by the next theorem, 
which yields that $h^u$ is  $C^\infty$. The argument for $h^s$ is similar and hence $h$ and $H=\Id+h$ 
are also $C^\infty$. 

\begin{theorem}\label{th ind} Suppose $i \in \{1, \dots, \ell\}$. If $h_j$ is $C^\infty$ for all $1\leq j< i$, then $h_i$ is also $C^\infty$.
\end{theorem}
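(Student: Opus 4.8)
The plan is to prove that $h_i$ has weak derivatives of all orders along the unstable foliation $\w^u$ lying in $L^2(\T^d)$; together with the smoothness of $h_i$ along $\w^s$ from Lemma~\ref{h^u s smooth} and a Journé-type regularity argument, this yields $h_i\in C^\infty(\T^d)$ and closes the induction. The only hard direction is $\w^u$, where the forward series $h_i=\sum_{k\ge 0}L_i^{-k-1}(R_i\circ f^{k})$ is useless because $Df^k$ expands. Instead I would work with the \emph{backward} representation obtained by solving \eqref{for:123} for $h_i(f(x))$ and iterating in negative time,
\begin{equation*}
 h_i=-\sum_{k=1}^{\infty}L_i^{k-1}\,(R_i\circ f^{-k}).
\end{equation*}
This series does not converge in $C^0$, but one checks directly that it formally satisfies \eqref{for:123}, so by uniqueness of the solution it represents $h_i$ once its right-hand side is given distributional meaning. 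Let $\w=\w^{\ge i}$ be the fast subfoliation of $\w^u$ tangent to $\E^i\oplus\dots\oplus\E^\ell$; it is $f$-invariant with uniformly $C^\infty$ leaves, and by \eqref{for:186} its $H$-image is the linear foliation $W^{\ge i}$ spanned by $E^i\oplus\dots\oplus E^\ell$.

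The first step is to differentiate the backward series once along $\w$. A formal term is $L_i^{k-1}D_\w(R_i\circ f^{-k})$, and since $\|Df^{-k}|_{\w^{\ge i}}\|$ is comparable to $\rho_i^{-k}$ up to a polynomial factor while $\|L_i^{k-1}\|\lesssim\rho_i^{k}$ up to a polynomial factor, the $C^0$ norms of these terms grow only polynomially in $k$; the oscillation of $R_i\circ f^{-k}$, whose frequencies concentrate at scale $\sim\rho_{\min}^{-k}$ transverse to $\w^u$, then forces convergence in a negative Sobolev space, so $D^1_\w h_i$ is a well-defined distribution on $\T^d$. This is the step where the inductive hypothesis and the $\w^s$-smoothness of the slow components enter: they make the holonomies of $\w^{\ge i}$ and the higher chain-rule terms for $f^{-k}$ (controlled through Lemma~\ref{D^mf^n}) regular enough to carry out these estimates. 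Then I would differentiate further along $\w^u$, balancing each extra factor $\|Df^{-k}|_{\w^u}\|\lesssim\rho_{\max}^{k}$ against the high-frequency concentration already produced by the first $\w$-derivative — the non-stationary oscillatory bookkeeping in the spirit of \cite{FKS} — and conclude that all weak derivatives of $D^1_\w h_i$ along $\w^u$ lie in $L^2(\T^d)$.

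The remaining, and hardest, step is to recover the weak $\w^u$-derivatives of $h_i$ itself from those of $D^1_\w h_i$, i.e.\ to integrate transverse to $\w$ inside $\w^u$. In general this is impossible — modifying $h_i$ by a function constant along $\w$ leaves $D^1_\w h_i$ unchanged — so the global arithmetic of $L$ must enter; indeed this is precisely where the bootstrap fails for reducible $L$. Using \eqref{for:186} I would construct a global coordinate chart straightening $\w^u$ to $W^u$ and $\w$ to $W^{\ge i}$; transporting the conjugacy equation to this chart turns $f|_{\w^u}$ into the linear map $L|_{E^u}$ acting on the quotient $E^u/E^{\ge i}\cong E^{<i}$ and on the complementary torus directions. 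Very weak irreducibility of $L$ — every factor over $\Q$ of its characteristic polynomial has a root of modulus $\rho_{\max}$ and one of modulus $\rho_{\min}$ — guarantees that neither $E^{\ge i}$ nor the complementary subspace $E^{<i}\oplus E^s$ contains a nonzero $L$-invariant rational subspace; via Liouville-type bounds for the algebraic eigenvalues this irrationality yields a Diophantine lower bound for the small denominators of the induced linear action. Feeding this Diophantine estimate into the Fourier expansion of the transported equation promotes "$D^1_\w h_i$ has all $\w^u$-derivatives in $L^2$" to "$h_i$ has all $\w^u$-derivatives in $L^2$".

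Finally, $h_i$ is uniformly $C^\infty$ along $\w^s$ by Lemma~\ref{h^u s smooth} and now has all weak $\w^u$-derivatives in $L^2(\T^d)$; since $\E^s$ and $\E^u$ together span $T\T^d$, a Journé-type regularity lemma together with an elliptic/hypoelliptic bootstrap for the mixed derivatives yields $h_i\in C^\infty(\T^d)$. I expect the recovery step of the third paragraph to be the principal obstacle: it is the only place where the global, number-theoretic structure of $L$ is used, and extracting the right Diophantine property from very weak irreducibility and matching it to the small denominators produced by the \emph{nonlinear}, transported cohomological equation is the technical core of the argument. The distributional convergence and the $L^2$ Sobolev bounds of the second paragraph are the secondary difficulty, as they require delicate non-stationary estimates.
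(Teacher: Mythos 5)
Your overall architecture tracks the paper's proof very closely: the backward (negative-time) series for $h_i$, first differentiating once along the fast subfoliation $\w^{i,\ell}$, obtaining $L^2$ weak derivatives of all orders, recovering full $\w^u$-derivatives from the fast ones via a Diophantine property of $E^{i,\ell}$ supplied by very weak irreducibility (the paper's Proposition~\ref{Dioph} proves exactly this using Katznelson's lemma, as you anticipate), and finishing with the de~la~Llave/Journ\'e-type regularity result of \cite[Theorem 3]{L01} combined with $\w^s$-smoothness. Your construction of the global chart $\Gamma_i$ by smoothing the fast components of $H$ while keeping the $C^\infty$ slow ones fixed is also essentially the paper's Proposition~\ref{chart}.

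The genuine gap is in your second paragraph, where you establish distributional convergence of the differentiated backward series. You argue via ``frequency concentration of $R_i\circ f^{-k}$ at scale $\rho_{\min}^{-k}$ transverse to $\w^u$'' and ``non-stationary oscillatory bookkeeping,'' but $f$ is nonlinear, so there is no clean Fourier picture for $R_i\circ f^{-k}$ and no obvious way to carry out such estimates. The paper sidesteps this entirely: after pairing with a test function and integrating by parts, it changes variables $y=H\circ\Gamma_i^{-1}(x)$, which turns the $f^{-k}$-dependence into an $L^{-k}$-dependence (since $H$ conjugates them), and then invokes \emph{exponential mixing of the linear model $L$ for H\"older observables} (Lind, Gorodnik--Spatzier; Theorem~\ref{th:1}). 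This exponential decay, which beats the polynomial growth of $\rho_i^{-k}L_i^k$ and $\rho_i^kL^{-k}|_{E^{i,\ell}}$, is the engine of the estimate; the paper further splits $\psi=\psi_N+(\psi-\psi_N)$ and trades off the mixing bound on the truncation against the derivative bounds of Lemma~\ref{D^mD} on the tail to absorb the higher $D^m_{E^u}$ derivatives. Your outline has no substitute for this mixing input, and without it I see no route to the needed $\norm{\psi}_{\mathcal{H}^\b}$ bounds.

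A secondary inaccuracy: you describe the Diophantine step as feeding an estimate into ``the Fourier expansion of the transported conjugacy equation.'' In the paper that step is purely analytical and makes no reference to the cohomological equation: Proposition~\ref{dioph} is a statement about an arbitrary function $\omega\in L^2$ whose fast directional derivatives have all $E^u$-derivatives in $L^2$, proved by decomposing the test function into fractional derivatives $|\partial_{v_j}|^{1/2^k}\theta_j$ with $\mathcal{H}^{\b/2}$ control, using only the Diophantine lower bound on $E^{i,\ell}$. The chart $\Gamma_i$ does not conjugate $f$ to $L$; it only straightens $\w^u$ and $\w^{i,\ell}$ so that this Fourier argument can be run in flat coordinates.
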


In the base case $i=1$ the assumption becomes vacuous.
We emphasize that the regularity of $h_i$ in the theorem is the global regularity on $\T^d$ rather than 
the regularity along the corresponding foliation.

\section{Proof of Theorem \ref{th ind}} 
\subsection{Outline of the proof}
We recall that the conjugacy $H$ is a $C^{1+\a}$ diffeomorphism. Using the Lyapunov splitting \eqref{splitL} for $L$ we obtain the corresponding  splitting for $f$
 \begin{equation} \label{for:7}
 \E^{u}=\E^{u,f}= \E^1 \oplus \E^2\oplus \dots \oplus \E^\ell
    \end{equation}
  into $\a$-H\"older  $Df$-invariant sub-bundles $\E^j=DH^{-1}(E^j)$  for $j=1, \dots, \ell$.
   
The bundles $\E^{i}$ are tangent to foliations $\w^{i}$, which are mapped by $H$ to the corresponding linear ones: $H(\w^j)=W^j$ for all $j=1, \dots, \ell$. In particular, each $\w^{j}$ is a $C^{1+\a}$ foliation. However, even its individual leaves are not more regular in general. This is the main reason why the regularity
of $H$ cannot be bootstrapped by studying its restrictions to $\w^{i}$.
Instead, we show global smoothness of the component $h_i$ on $\T^d$.

We will work with the fast subbundle $\E^{i,\ell}$ and  the corresponding foliations $\w^{i,\ell}$, where
\begin{align*}
 \E^{i,\ell}= \E^i \oplus \dots \oplus \E^\ell =T\w^{i,\ell}.
\end{align*}
We note that each $\w^{i,\ell}$ has uniformly $C^\infty$ leaves, moreover it gives a $C^\infty$ 
subfoliation of the leaves of $\w^u$. While this holds in general, in our case it follows from
Proposition \ref{chart}.

Our approach is to represent the first derivatives of  $h_i$ along $\w^{i,\ell}$ 
as series over the {\em negative} iterates of $f$. Using exponential mixing we show that these series 
converge as distributions as the ``expanding twist" by $L_i$ is now balanced by the contracting 
derivative of $f^{-1}$ along $\w^{i,\ell}$ . Then we further differentiate these series 
 in distributional sense along all directions in $\w^u$. 
We obtain estimates of such derivatives of all orders in terms of fractional Sobolev norms of test functions and then show that they are in $L^2(\T^d)$. This is done in Proposition \ref{L^2}. 

In Proposition \ref{chart} we construct an appropriate global coordinate chart which sends 
foliations $\w^{u}$ and $\w^{i,\ell}$ to the corresponding linear foliations $W^u$ and $W^{i,\ell}$ for $L$. 
This allows us to use the Diophantine property of $W^{i,\ell}=E^{i,\ell}$ to show that {\em all} derivatives
 of  $h_i$ along $\w^{u}$ are in $L^2(\T^d)$. This is done in Proposition \ref{Dioph}.
Since by Lemma \ref{h^u s smooth}  $h_i$ is also uniformly $C^\infty$ along $\w^s$, and 
 we will conclude that $h_i$ is  $C^\infty$ on $\T^d$ using \cite[Theorem 3]{L01}.

\subsection{Global charts}
Modifying the $C^{1+\a}$ conjugacy $H$, we now construct a global chart  $\Gamma_i$ for foliations $\w^u$ and $\w^{i,\ell}$ such that $\Gamma_i$ is $C^\infty$ along $\w^u$.

\begin{proposition}\label{chart} Suppose that $i \in \{1, \dots, \ell\}$ and that $h_j\in C^\infty (\T^d)$ for $1\leq j< i$.\\
Then there exists a $C^{1+\a}$ diffeomorphism $\Gamma_i$ of $\T^d$ such that
\begin{enumerate}
  \item\label{for:9} $\Gamma_i$ maps $\w^u$ to $W^u$  and is uniformly $C^\infty$ along $\w^u$;

  \item\label{for:10} $\Gamma_i$ maps $\w^{i,\ell}$ to $W^{i,\ell}$.
\end{enumerate}
\end{proposition}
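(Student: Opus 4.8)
The plan is to build $\Gamma_i$ by correcting the conjugacy $H$ along the directions that have already been linearized by the inductive hypothesis. Recall that $H(\w^u)=W^u$ and that $H$ restricted to a leaf of $\w^u$ is a homeomorphism onto a leaf of $W^u$; the obstruction to $H$ itself serving as $\Gamma_i$ is twofold — first, $H$ is only $C^{1+\a}$ along $\w^u$, not $C^\infty$, and second, $H$ need not send $\w^{i,\ell}$ to $W^{i,\ell}$ (it sends $\w^j$ to $W^j$ only leafwise in the sense that it maps the $j$-th foliation to the $j$-th linear foliation, but the \emph{coordinate description} of $\w^{i,\ell}$ under $H$ is governed by the components $h_1,\dots,h_{i-1}$, which are precisely the ones we now know to be smooth). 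So the first step is to write $H=\Id+h$ with $h=\sum_{j=1}^\ell h_j + h^s$ in the splitting $\R^d=E^1\oplus\cdots\oplus E^\ell\oplus E^s$, and observe that by Lemma~\ref{h^u s smooth} each $h_j$ is uniformly $C^\infty$ along $\w^s$, and by the inductive hypothesis $h_1,\dots,h_{i-1}$ are globally $C^\infty$ on $\T^d$, while $h_i,\dots,h_\ell$ and $h^s$ are a priori only $C^{1+\a}$.

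The second step is to define the correction. I would set
\begin{equation*}
\Gamma_i = \Bigl(\Id + \sum_{j=1}^{i-1} h_j + h^s\Bigr) = H - \sum_{j=i}^{\ell} h_j,
\end{equation*}
or something close to it: the idea is to keep exactly those components of $h$ that are already known to be smooth (the $h_j$ with $j<i$, which are smooth everywhere, and $h^s$, which at least is smooth along $\w^u$ by Lemma~\ref{h^u s smooth}) and discard the rough unstable components $h_i,\dots,h_\ell$, replacing them by zero. Since $H$ is a $C^{1+\a}$ diffeomorphism and the discarded piece $\sum_{j\ge i}h_j$ is $C^{1+\a}$, the map $\Gamma_i$ is $C^{1+\a}$; if the perturbation is small enough (which one arranges by first conjugating so that the relevant components are controlled, or by a standard openness argument for diffeomorphisms) it is a diffeomorphism. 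For the regularity claim in~\eqref{for:9}: along $\w^s$ the components $h_j$ ($j<i$) and $h^s$ are uniformly $C^\infty$, and $h_j$ ($j<i$) are $C^\infty$ along $\w^u$ as well since they are globally smooth; one then checks that the only rough directions have been removed, so $\Gamma_i$ is uniformly $C^\infty$ along $\w^u$. For~\eqref{for:9}'s first half, $\Gamma_i$ maps $\w^u$ to $W^u$ because the correction differs from $H$ only by a function valued in $E^u$ that is constant along… — here is where care is needed, see below.

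The third step is to verify the two foliation-mapping properties. That $\Gamma_i(\w^u)=W^u$: the unstable component of $\Gamma_i$ and of $H$ agree up to the $E^{i,\ell}$ part, and since $H$ already carries $\w^u$ to $W^u$, one needs that subtracting $\sum_{j\ge i} h_j$ does not spoil this — the cleanest way is to note that $\w^u$ is characterized by $H$ and that the modification only rearranges which leaf of $W^u$ a point lands on, not whether it lands on a leaf; more precisely, $h_j$ for $j\ge i$ is uniformly $C^\infty$ along $\w^s$ (Lemma~\ref{h^u s smooth}) but we need it constant along $\w^s$, which is exactly $H(\w^s)=W^s$ combined with the $E^u$-valued-ness… this requires unwinding $H(\w^s)=W^s$ component-wise. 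For~\eqref{for:10}, that $\Gamma_i(\w^{i,\ell})=W^{i,\ell}$: the leaf of $\w^{i,\ell}$ through a point is tangent to $\E^i\oplus\cdots\oplus\E^\ell=DH^{-1}(E^{i,\ell})$; under $H$ it goes to the leaf of $W^{i,\ell}$, and one must check that the lower-order correction we applied (keeping $h_j$, $j<i$) sends the image-leaf, which is a translate of $E^{i,\ell}$, to another translate of $E^{i,\ell}$ — this holds because $h_1,\dots,h_{i-1}$ are $E^1,\dots,E^{i-1}$-valued, hence their contribution to displacement within $W^{i,\ell}$-leaves is… again needs the leafwise-constancy statement.

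I expect the main obstacle to be precisely this bookkeeping: showing that the modified map $\Gamma_i$ maps $\w^{i,\ell}$ \emph{exactly} to $W^{i,\ell}$ (not merely to some foliation close to it). The natural tool is the relation $H(\w^{i,\ell})=W^{i,\ell}$ together with the fact that, in the coordinates where $\w^u$ is already linearized, the "graph" of a $\w^{i,\ell}$-leaf over $E^{i,\ell}$ is described by functions built from $h_1,\dots,h_{i-1}$ alone — so once those are smooth, one can literally straighten $\w^{i,\ell}$ by subtracting them. Making "described by $h_1,\dots,h_{i-1}$ alone" rigorous is where the real work lies, and it likely uses the leafwise structure of $H$ (its restriction to each $\w^j$ being a conjugacy to the linear $W^j$) in an essential way; I would handle it by passing to the universal cover, writing the leaf of $\w^{i,\ell}$ through $x$ explicitly via $\bar H^{-1}$ of the affine subspace $\bar H(x)+E^{i,\ell}$, and computing.
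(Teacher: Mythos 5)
The core idea of your proposal — modify $H$ only in the $E^j$ components with $j\ge i$, keeping $h^s$ and $h_1,\dots,h_{i-1}$ so that the $E^s$-component and the $(E^1\oplus\cdots\oplus E^{i-1})$-components of $\Gamma_i$ agree with those of $H$, which is what forces $\Gamma_i(\w^u)=W^u$ and $\Gamma_i(\w^{i,\ell})=W^{i,\ell}$ — is exactly the right one, and this matches the paper. But there is a genuine gap in the way you implement the modification: you discard $h_i,\dots,h_\ell$ outright and set them to zero, defining $\Gamma_i = \Id + h^s + \sum_{j<i}h_j$. There is no reason this map should be a diffeomorphism. The conjugacy $H$ is not assumed to be close to the identity, so the discarded piece $\sum_{j\ge i}h_j$ need not be small in any norm, and "a standard openness argument" has nothing to apply to. Concretely, the derivative of your $\Gamma_i$ is $\Id$ minus a projection of $Dh$ that could be large, and even if the derivative were everywhere invertible, global injectivity of $\Id+h^s+\sum_{j<i}h_j$ is not forced by the invertibility of $\Id+h$. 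Your suggestion of "first conjugating so that the relevant components are controlled" is not fleshed out and does not resolve this.

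The paper's fix is to replace $h_j$ for $j\ge i$ not by $0$ but by a smoothing $s_\varepsilon(h_j)$ obtained by convolution. Then $\tilde h_\varepsilon\to h$ in $C^1$ as $\varepsilon\to 0$, so $\Gamma_i=\Id+\tilde h_\varepsilon$ is $C^1$-close to the diffeomorphism $H$ for small $\varepsilon$ and is therefore itself a diffeomorphism; it remains $C^{1+\a}$ because the convolution does not degrade the H\"older class. This single change is what makes the diffeomorphism claim rigorous while preserving everything you wanted: the untouched $E^s$ and $E^j$ ($j<i$) components still guarantee the two foliation-mapping properties, and the components $s_\varepsilon(h_j)$ for $j\ge i$ are $C^\infty$, so uniform smoothness along $\w^u$ follows from the inductive hypothesis on $h_j$ ($j<i$), the smoothing for $j\ge i$, and Lemma~\ref{h^u s smooth} for $h^s$. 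I would also note that your verification of the foliation-mapping properties is correct in spirit but never actually closed — the clean statement is simply that $\Gamma_i$ and $H$ have the same $E^s$-component (hence $\Gamma_i(\w^u)=W^u$) and the same $E^s\oplus E^1\oplus\cdots\oplus E^{i-1}$-component (hence $\Gamma_i(\w^{i,\ell})=W^{i,\ell}$), with no further "care needed."
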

We note that $\Gamma_i$ may not map $\w^{s}$ to $W^{s}$. 

If $i=1$ in the proposition, we have  $\w^{i,\ell}=\w^{1,\ell}=\w^u$ and
the assumption that $h_j$  is $C^\infty$ becomes vacuous.
The argument applies and gives a $C^{1+\a}$ diffeomorphism $\Gamma_1$  
which is just a smoothing of $H$ along the leaves $\w^u$.

\begin{proof} 
Recall that the  conjugacy $H=\Id+h$, where $h:\T^d\to \R^d$, is a $C^{1+\a}$ diffeomorphism. We 
write $h=(h^s,h_1,\dots,  h_{\ell})$ and smooth each $h_j$, $j\geq i$, while keeping the other components unchanged. More precisely, we define
\begin{align} \label{chart h}
\tilde h_\varepsilon =\big(h^s,h_1,\dots,  h_{i-1}, {s}_\varepsilon(h_i),\dots ,  {s}_\varepsilon(h_{\ell})\big),
\end{align}
where ${s}_\varepsilon(h_j) \in C^\infty (\T^d)$ is obtained using a standard smoothing by convolution operator.   
Since $\tilde h_\varepsilon \to h$ in $C^1$ as $\varepsilon \to 0$, we see that  $\Id+\tilde h_\varepsilon$ is  
$C^1$ close to the diffeomorphism $H$  for small $\varepsilon$. It follows that $\Id+\tilde h_\varepsilon$ is invertible and hence is a $C^{1}$ diffeomorphism. We fix such $\e>0$ and define
\begin{align} \label{chart G}
\Gamma_i=\Id+\tilde h_\varepsilon.
\end{align}
 Since $\tilde h_\varepsilon$ remains $C^{1+\a}$ on $\T^d$, both $\Gamma_i$ and  $\Gamma_i^{-1}$ are also $C^{1+\a}$. 
 
For $j\geq i$ components $(\tilde h_\varepsilon)_j={s}_\varepsilon(h_j)$ are $C^\infty$ by smoothing, and  for $1\le j <i$ components $(\tilde h_\varepsilon)_j =h_j$   are
 $C^\infty$  by the assumption.  
 The stable component $\tilde h_\varepsilon^s =h^s$  remains unchanged  and hence is 
 uniformly $C^\infty$ along  $\w^u$ by Lemma \ref{h^u s smooth}. Thus the whole 
  $\tilde h_\varepsilon$ is
uniformly $C^\infty$ along  $\w^u$, and hence so is $\Gamma_i$.

 Also since  $\tilde h_\varepsilon^s =h^s$,  the property $H(\w^u) = W^u$ ensures that $\Gamma_i(\w^u) = W^u$.
Similarly, $(\tilde h_\varepsilon)_j =h_j$, for $1 \le j <i$, and so  $H(\w^{i,\ell}) = W^{i,\ell}$ yields $\Gamma_i(\w^{i,\ell}) = W^{i,\ell}$.
\end{proof}


\subsection{Derivatives along $\w^u$}

We denote the dimension of $E^u$  by $d^u$ and we fix an orthonormal basis $x_1,\dots, x_{d^{u}}$ 
of $E^u$ such that $x_1,\dots, x_{\dim E^{i,\ell}}$ is a basis of $E^{i,\ell}$. 
We denote by $m=(m_1,\dots,m_{d^u})$ a multi-index with nonnegative integer components, 
and the corresponding derivative of a function $\omega $ on $\T^d$ by
\begin{align*}
 D^m_{E^u}(\omega)&=\partial_{x_1}^{m_1}\partial_{x_2}^{m_2}\cdots \partial_{x_{d^u}}^{m_{d^u}}\omega.
\end{align*}
The global foliation chart $\Gamma_i$ allows us to conveniently describe the derivatives 
along the foliations  $\w^u$ and $\w^{i,\ell}$ in a similar way:
\begin{align*}
D^m_{\w^u}\omega=D^m_{E^u}(\omega\circ \Gamma_i^{-1})=\partial_{x_1}^{m_1}\partial_{x_2}^{m_2}\cdots \partial_{x_{d^u}}^{m_{d^u}}(\omega\circ \Gamma_i^{-1}).
\end{align*}

The proof of Theorem \ref{th ind} has two main parts. The first one is the following proposition, 
which shows that all derivatives of $h_i$ along  $\w^u$ are in $L^2$, as long as the first 
derivative is taken along $\w^{i,\ell}$.

\begin{proposition}\label{L^2}  
Suppose that $i \in \{1, \dots, \ell\}$ and that $h_j$ are $C^\infty$ for $1\leq j< i$. \\
Then for every $1 \le k \le \dim E^{i,\ell}$  and every multi-index  $m=(m_1,\dots,m_{d^u})$
$$
D^m_{E^u}  \partial_{x_k} ( h_{i}\circ \Gamma_i^{-1}) \in L^2(\TT^d).
$$
   \end{proposition}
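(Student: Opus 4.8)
The plan is to work with the representation of $h_i$ via \emph{negative} iterates of $f$ and to interpret the differentiated series distributionally. Projecting the conjugacy equation to $E^i$ gives \eqref{for:123}, and iterating forward from $f^{-k}(x)$ one gets the formal identity $h_i = -\sum_{k=1}^\infty L_i^{k-1}(R_i\circ f^{-k})$. This series does not converge in $C^0$ because $\|L_i^{k-1}\|$ grows like $\rho_i^k$. However, if we differentiate once along $\w^{i,\ell}$, the chain rule produces a factor $Df^{-k}|_{\w^{i,\ell}}$ whose norm decays like $\rho_i^{-k}$ (up to subexponential corrections, using that the fast subbundle $\E^{i,\ell}$ is exactly contracted at rate $\le \rho_i^{-1}$ under $f^{-1}$ — this is the key point where $i$ being the \emph{slowest} surviving exponent matters). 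So the termwise-differentiated series $\partial_{x_k}(h_i\circ\Gamma_i^{-1}) = -\sum_{k\ge 1} \partial_{x_k}\big(L_i^{k-1}(R_i\circ f^{-k}\circ\Gamma_i^{-1})\big)$ has terms of size $\rho_i^k \cdot \rho_i^{-k} = O(1)$ (subexponentially), which is still not obviously convergent. The remedy, following the $\Z^k$-action techniques of \cite{FKS}, is to pair the series against a test function and use \emph{exponential mixing} of $f$ (which is Anosov, hence exponentially mixing for smooth observables against the SRB or an equivalent smooth measure) to gain an extra exponential factor $\theta^k$ from the correlation decay of $R_i\circ f^{-k}$ against a fixed smooth test function; this makes the distributional series absolutely convergent. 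Here one must be careful that the precompositions with $\Gamma_i^{-1}$ and the smoothing in the chart do not destroy smoothness of the relevant observables — this is exactly why Proposition \ref{chart} was proved, guaranteeing $\Gamma_i$ is uniformly $C^\infty$ along $\w^u$ so that $\partial_{x_k}$ and further $D^m_{E^u}$ applied after $\circ\,\Gamma_i^{-1}$ are honest smooth differential operators along $\w^u$.

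Next I would apply the further derivatives $D^m_{E^u}$. Differentiating the $k$-th term $m$ more times along $\w^u$ brings down, by the chain rule and the higher-order derivative estimates for $f^{-k}$ (the analogue of Lemma \ref{D^mf^n}, which controls $\|D^m f^{-k}|_{\w^u}\|$ by a polynomial in $k$ times $\|Df^{-k}|_{\w^u}\|^{|m|} \le \rho_i^{-|m|k}$ along the fast part, and by $\rho_\ell^{-k}\cdots$ type bounds for the genuinely unstable directions — in any case bounded), only subexponential factors in $k$; the twist is still $\rho_i^k$ but one of the $k$-many derivative hits lands on the already-differentiated $\w^{i,\ell}$-direction and kills it. The crucial structural fact is that in $D^m_{E^u}\partial_{x_k}(h_i\circ\Gamma_i^{-1})$ there is \emph{always} at least that one distinguished $\partial_{x_k}$ with $x_k\in E^{i,\ell}$, so every term carries at least one factor $\le C\rho_i^{-k}$ cancelling the twist, while the remaining $D^m_{E^u}$ derivatives only contribute polynomially in $k$. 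Thus, pairing $D^m_{E^u}\partial_{x_k}(h_i\circ\Gamma_i^{-1})$ against a smooth test function $\varphi$ and integrating by parts to move all derivatives off the series and onto $\varphi$, each term becomes $\int R_i\circ f^{-k}\cdot(\text{smooth function of }\varphi\text{ and its derivatives up to order }|m|+1)$, and exponential mixing gives a bound $C(\varphi)\,\theta^k$ after the polynomial-in-$k$ loss is absorbed. Summing in $k$ yields $|\langle D^m_{E^u}\partial_{x_k}(h_i\circ\Gamma_i^{-1}),\varphi\rangle| \le C\,\|\varphi\|_{C^{|m|+2}}$ or, sharpened, $\le C\,\|\varphi\|_{H^{s}}$ for an appropriate Sobolev exponent $s$. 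This shows the distributional derivative $D^m_{E^u}\partial_{x_k}(h_i\circ\Gamma_i^{-1})$ extends to a bounded functional on $H^s$, i.e. lies in $H^{-s}$; but to get $L^2$ one needs more care.

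To upgrade $H^{-s}$-membership to $L^2$-membership I would use the standard duality/interpolation trick: having shown that $D^m_{E^u}\partial_{x_k}(h_i\circ\Gamma_i^{-1})$ is in $H^{-s}$ for \emph{every} multi-index $m$ (with the $s$ uniform, depending only on the mixing rate and the dimension, not on $m$), one concludes $\partial_{x_k}(h_i\circ\Gamma_i^{-1})$ has all $\w^u$-derivatives in $H^{-s}$, hence it is in $H^{N}$ along $\w^u$ for all $N$ after shifting indices, and in particular $\partial_{x_k}(h_i\circ\Gamma_i^{-1})$ itself together with all its $D^m_{E^u}$-derivatives is in $L^2$. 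Concretely: $\|D^m_{E^u}\partial_{x_k}(h_i\circ\Gamma_i^{-1})\|_{L^2} \le C\,\|D^{m'}_{E^u}\partial_{x_k}(h_i\circ\Gamma_i^{-1})\|_{H^{-s}}$ where $m'$ has $\lceil s\rceil$ extra derivatives, using that the Fourier multiplier bounds in the $\w^u$-frequency variables compose; since the right side is finite by the mixing estimate applied to $m'$, the left side is finite. Combined with Lemma \ref{h^u s smooth} giving smoothness along $\w^s$ (which survives the chart change because $\Gamma_i$ preserves $h^s$), this gives $L^2$-control of all $E^u$-derivatives of $\partial_{x_k}(h_i\circ\Gamma_i^{-1})$, which is the assertion.

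The main obstacle I anticipate is making the distributional differentiation of the divergent series \eqref{series-} rigorous \emph{uniformly in the order of differentiation}: one must carefully track that (a) the forward/backward derivative estimates for $f^{\pm k}$ applied along the fast foliation $\w^{i,\ell}$ really give the decay $\rho_i^{-k}$ with only polynomial-in-$k$ errors — this requires the uniform $C^\infty$ structure along $\w^u$ from Proposition \ref{chart} and the Lasota–Yorke / tempered-distortion type bounds of Lemma \ref{D^mf^n}; and (b) the exponential mixing rate $\theta$ is fixed and independent of $m$, while the polynomial loss from differentiating $k$-many times grows with $m$ — this is fine since any polynomial is dominated by $\theta^{-k/2}$, but one must organize the estimate so that $R_i\circ f^{-k}$ is always the ``fast-mixing'' factor and everything else (the $L_i^{k-1}$ twist absorbed against the $Df^{-k}$ factors, the chart, the test function) forms a \emph{fixed} smooth observable up to polynomial growth. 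Getting the bookkeeping right so that exactly one derivative in the tangential $E^{i,\ell}$-direction is always available to cancel the worst-case twist, with no term left uncancelled, is where the proof has to be most careful.
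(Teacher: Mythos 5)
Your overall strategy is the same as the paper's: represent $h_i\circ\Gamma_i^{-1}$ as a series over negative iterates, differentiate once along $\w^{i,\ell}$ to cancel the twist $L_i^{k-1}$ against $Df^{-k}|_{\w^{i,\ell}}\sim\rho_i^{-k}$, note that further $\w^u$-derivatives only cost polynomially in $k$ (this is exactly Lemma~\ref{D^mD}), and pair with test functions using exponential mixing. However, there is a genuine gap at the crucial sharpening step. If you integrate by parts to put all $|m|+1$ derivatives on the test function $\varphi$, the mixing estimate bounds the $k$-th term by $\text{poly}(k)\,e^{-\gamma k}\,\|D^m\varphi\|_{C^\alpha}$. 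Summing in $k$ gives a bound by $C_m\|\varphi\|_{C^{|m|+2}}$, which shows the series converges as a distribution of order $|m|+2$, but this is \emph{not} the bound you need. Your assertion that this is ``sharpened'' to $\le C\|\varphi\|_{H^s}$ with $s$ independent of $m$ is unjustified, and without it the $L^2$ upgrade breaks: the duality argument (in the paper, Lemma~\ref{to L^2}) tests against Fourier truncations $\omega_N$ of the Hölder function $\omega=\partial_{x_k}(h_i\circ\Gamma_i^{-1})$ itself, and these have $\|\omega_N\|_{\mathcal{H}^\beta}\le\|\omega\|_{\mathcal{H}^\beta}<\infty$ only because $\beta<\alpha$ is fixed, whereas $\|\omega_N\|_{C^{|m|+2}}$ diverges with $N$.

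The missing technique is the Fourier truncation of the \emph{test function}: write $\psi=\psi_N+(\psi-\psi_N)$ with $\psi_N$ the partial Fourier sum. For the smooth part $\psi_N$, one uses exponential mixing as you describe and pays $\|D^m\psi_N\|_{C^\alpha}\lesssim N^{|m|+d+1}\|\psi\|_{L^2}$ (Bernstein-type), getting $\text{poly}(k)\,N^{|m|+d+1}e^{-\gamma k}\|\psi\|_{L^2}$. For the tail $\psi-\psi_N$, one does \emph{not} use mixing at all: move the derivatives the other way onto $R_i\circ f^{-k}\circ\Gamma_i^{-1}$, apply the derivative estimate $\|D^m_{\w^u}D_{\w^{i,\ell}}(R_i\circ f^{-k})\|\le K(\rho_i^{-1}+\delta)^k$ from Lemma~\ref{D^mD}, and pay $\|\psi-\psi_N\|_{L^2}\le N^{-\beta}\|\psi\|_{\mathcal H^\beta}$; this yields $(1+\rho_i\delta)^k N^{-\beta}\|\psi\|_{\mathcal H^\beta}$, slowly growing in $k$ but decaying in $N$. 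Choosing $N=N(k)$ exponentially in $k$ interpolates between the two and gives a geometrically convergent series bounded by $\|\psi\|_{\mathcal H^\beta}$ with $\beta<1$ independent of $m$. This is precisely the organizational difficulty you flag at the end of your proposal, and the truncation-in-$N$ is the device that resolves it. One minor further point: the mixing you want to invoke is most cleanly that of $L$ with respect to Lebesgue (Lind / Gorodnik--Spatzier), obtained after changing variables via $f^{-k}=H^{-1}L^{-k}H$; invoking SRB mixing of $f$ directly would require reconciling the $f$-invariant measure with the Lebesgue pairing used for weak derivatives.
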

  We will prove this proposition in Section \ref{proof L^2}. The proof is dynamical and uses  derivative estimates and exponential mixing.
 \vskip.1cm     
The second part is Proposition \ref{Dioph} where we improve the conclusion of Proposition \ref{L^2} by showing that  {\em all} 
derivatives $D^m_{E^u} (h_{i}\circ \Gamma_i^{-1})$ are in $L^2(\T^d)$. 
    
\begin{proposition}\label{Dioph}
Suppose that  $\,D_{E^u}^m\partial_{x_k}( h_{i}\circ \Gamma_i^{-1})\in L^2(\TT^d)$ \,for every $1 \le k \le \dim E^{i,\ell}$  and every multi-index  $m=(m_1,\dots,m_{d^u})$.
Then
 $$D_{E^u}^m( h_{i}\circ \Gamma_i^{-1}) \in L^2(\TT^d)\quad\text{for every $m=(m_1,\dots,m_{d^u})$.}
 $$

\end{proposition}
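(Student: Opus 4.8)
The plan is to pass to Fourier series on $\T^d$ and run a weighted $\ell^2$ estimate driven by a Diophantine property of $E^{i,\ell}$. Write $\phi:=h_i\circ\Gamma_i^{-1}$ (coordinate‑wise, since $\phi$ is $E^i$‑valued) and, for $n\in\Z^d$, set $P_l(n):=\langle n,x_l\rangle$, so that $\widehat{D^m_{E^u}\phi}(n)=\prod_{l=1}^{d^u}(2\pi i\,P_l(n))^{m_l}\hat\phi(n)$. The conclusion is then $\sum_n\prod_l|P_l(n)|^{2m_l}|\hat\phi(n)|^2<\infty$ for all $m$, while the hypothesis is the same with an extra factor $|P_k(n)|^2$ for some $1\le k\le r:=\dim E^{i,\ell}$. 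Let $p^f(n),p^F(n)$ denote the orthogonal projections of $n$ onto $E^{i,\ell}$ and onto its orthogonal complement $F$ inside $E^u$, so $\|p^f(n)\|^2=\sum_{k\le r}|P_k(n)|^2$ and $\|p^F(n)\|^2=\sum_{l>r}|P_l(n)|^2$. First I would reduce the statement: by the power--mean inequality $\prod_l|P_l(n)|^{2m_l}\le\|p^f(n)\|^{2a}\|p^F(n)\|^{2b}$ with $a=\sum_{l\le r}m_l$, $b=\sum_{l>r}m_l$; if $a\ge1$ one factor $\|p^f(n)\|^2=\sum_{k\le r}|P_k(n)|^2$ splits off and the remaining powers expand multinomially, so each resulting term is directly controlled by the hypothesis. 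Hence it suffices to prove $\sum_n\|p^F(n)\|^{2M}|\hat\phi(n)|^2<\infty$ for every $M\ge0$. The same multinomial expansion of the hypothesis yields the auxiliary bounds $B_Q:=\sum_n\|p^F(n)\|^{2Q}\|p^f(n)\|^2|\hat\phi(n)|^2<\infty$ for all $Q\ge0$.

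Two further inputs enter. The first is the Diophantine property of the fast linear foliation, which is where very weak irreducibility of $L$ is used: since $E^{i,\ell}$ carries the maximal modulus $\rho_{\max}$, no nonzero integer vector is orthogonal to $E^{i,\ell}$ and there are $c,\tau>0$ with $\|p^f(n)\|=\dist\!\big(n,(E^{i,\ell})^\perp\big)\ge c\|n\|^{-\tau}$ for all $n\in\Z^d\setminus\{0\}$. The second is the a priori regularity $\phi\in C^{1+\a}\subset H^s(\T^d)$ for some $s>0$, i.e. $\sum_n\|n\|^{2s}|\hat\phi(n)|^2<\infty$. Now fix $M$ and split $\Z^d$ into three pieces. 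On $\{\|p^F(n)\|\le1\}$ the summand is $\le|\hat\phi(n)|^2$, contributing $\le\|\phi\|_{L^2}^2$; on $\{\|p^f(n)\|\ge1\}$ it is $\le\|p^F(n)\|^{2M}\|p^f(n)\|^2|\hat\phi(n)|^2$, controlled by $B_M$. On the remaining set $\{0<\|p^f(n)\|<1,\ \|p^F(n)\|>1\}$ I would interpolate: with $\theta:=s/(s+\tau)\in(0,1)$ and an integer $Q>M/\theta$, write
\[
\|p^F(n)\|^{2M}|\hat\phi(n)|^2=\big(\|p^F(n)\|^{2Q}\|p^f(n)\|^2|\hat\phi(n)|^2\big)^{\theta}\big(\|n\|^{2s}|\hat\phi(n)|^2\big)^{1-\theta}\,X(n),
\]
with $X(n)=\|p^F(n)\|^{2(M-Q\theta)}\|p^f(n)\|^{-2\theta}\|n\|^{-2s(1-\theta)}$. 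On this set $\|p^F(n)\|^{2(M-Q\theta)}\le1$ (the exponent is negative and $\|p^F(n)\|>1$), the Diophantine bound gives $\|p^f(n)\|^{-2\theta}\le c^{-2\theta}\|n\|^{2\tau\theta}$, and $2\tau\theta-2s(1-\theta)\le0$ by the choice of $\theta$, so $X(n)\le c^{-2\theta}$. Summing over $n$ and applying H\"older with exponents $1/\theta$ and $1/(1-\theta)$ bounds this piece by $c^{-2\theta}B_Q^{\theta}\|\phi\|_{H^s}^{2(1-\theta)}<\infty$. Combining the three pieces proves the estimate for all $M$, hence the proposition.

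The hard part is genuinely the Diophantine property of $E^{i,\ell}$ — it is the only place where global information about $L$ is needed, and it must be extracted separately from very weak irreducibility and the fact that $E^{i,\ell}$ carries the top Lyapunov exponent; the rest is bookkeeping of weighted $\ell^2$ sums together with one interpolation. I should also note that $\phi\in L^2$ alone would not suffice: one truly needs a positive amount of Sobolev smoothness of $\phi$ to absorb the small–divisor loss $\|n\|^{2\tau\theta}$, and this is exactly what the a priori $C^{1+\a}$ regularity of the conjugacy supplies.
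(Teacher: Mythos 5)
Your proof is correct and reaches the conclusion by a genuinely different route from the paper's. The paper first establishes a dual-type estimate $|\langle D^m_{E^u}\omega,\psi\rangle|\leq K\norm{\psi}_{\mathcal{H}^\beta}$ against arbitrary $C^\infty$ test functions (Proposition \ref{dioph}): it iterates Cauchy--Schwarz on the Fourier side to shrink the power of $|n\cdot v_j|$ down to a dyadic fractional power $1/2^k$, then uses the Diophantine bound to decompose the test function as $\psi=\hat\psi_0+\sum_j|\partial_{v_j}|^{1/2^k}\theta_j$ with $\theta_j$ controlled in $\mathcal{H}^{\beta/2}$ (Lemma \ref{lemma theta}), and finally converts the dual estimate (applied with multi-index $2m$) into $D^m_{E^u}\omega\in L^2$ via Lemma \ref{to L^2}. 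You bypass the test-function machinery entirely, work directly with weighted $\ell^2$ sums of Fourier coefficients, and handle the dangerous frequency region $\{0<\norm{p^f(n)}<1,\ \norm{p^F(n)}>1\}$ by a single H\"older interpolation between the auxiliary bound $B_Q$ (a multinomial repackaging of the hypothesis) and the a priori Sobolev regularity $\phi\in\mathcal{H}^s$ coming from the $C^\a\subset\mathcal H^\b$ embedding \eqref{Holder}, choosing $\theta=s/(s+\tau)$ so that the Diophantine loss $\norm{n}^{2\tau\theta}$ is exactly absorbed by $\norm{n}^{-2s(1-\theta)}$. Both arguments rest on the same two inputs — the Diophantine lower bound $\norm{\pi_{E^{i,\ell}}(n)}\ge c\norm{n}^{-d}$, obtained from Katznelson's lemma and very weak irreducibility, and a positive amount of a priori Sobolev regularity of $h_i\circ\Gamma_i^{-1}$ — and the paper's dyadic parameter $2^k$ plays the same balancing role as your interpolation exponent $\theta$. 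Your version is shorter and more self-contained, replacing iterated Cauchy--Schwarz, the fractional-derivative decomposition, and the $L^2$-upgrade step by one H\"older inequality; the paper's version is more modular, and in particular Lemma \ref{to L^2} is reused in the proof of Proposition \ref{L^2}, which your approach would have to address separately. (A small terminological point: your inequality $\prod_l|P_l(n)|^{2m_l}\le\norm{p^f(n)}^{2a}\norm{p^F(n)}^{2b}$ is not really the power-mean inequality but the trivial bound $|P_l(n)|\le\norm{p^f(n)}$ or $|P_l(n)|\le\norm{p^F(n)}$ applied factor by factor; the estimate is correct either way.)
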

We will prove this proposition in Section \ref{proof Dioph}.
The proof is analytical and relies on a Diophantine property of the linear foliation $E^{i,\ell}$,
which we obtain from the very weak irreducibility of $L$. 
This is the only place where we use the irreducibility assumption. 
\vskip.1cm

Combining Propositions \ref{L^2} and \ref{Dioph} we conclude that $D^m_{E^u} (h_{i}\circ \Gamma_i^{-1}) \in L^2(\TT^d)$ for all $m$, that is, all derivatives $D^m_{\w^u} h_{i}$ of $h_i$ along $\w^u$ are in $L^2(\TT^d)$. Recall that by Lemma \ref{h^u s smooth} the map $h^u$ and hence each component $h_{i}$ are uniformly $C^\infty$ along $\w^{s}$, that is, $D^m_{\w^s} h_{i}$ are continuous and so are also in $L^2(\TT^d)$.
Now we apply \cite[Theorem 3]{L01}. 
It yields that all derivatives of order $m$ of a function $\phi$ are   in $L^2(\TT^d)$
 if   $\phi$ has  derivatives of order $m$ in $L^2(\TT^d)$ along finitely many transverse 
 foliations, each with uniformly $C^\infty$ leaves and admitting foliation charts whose Jacobians are
 uniformly $C^\infty$ along the foliation. The last assumption holds for $\w^s$ and $\w^u$ \cite[Theorem 2]{L01}.
 Thus we conclude that all derivatives 
of $h_i$ are in $L^2(\TT^d)$, and hence $h_i \in C^\infty(\TT^d)$ by Sobolev embedding theorems.

This completes the proof of Theorem \ref{th ind} modulo the proofs of Propositions \ref{L^2} and \ref{Dioph}. For convenience of the exposition we will prove Proposition \ref{Dioph} first.


\section{Proof of Proposition \ref{Dioph}} \label{proof Dioph}
 The proof of Proposition \ref{Dioph} is structured as follows. 
In Section \ref{Weak irred} we give corollaries of very weak irreducibility 
and then state and prove a certain Diophantine property for spaces $E^{i,\ell}$.
In Section \ref{functions} we discuss fractional Sobolev spaces $\mathcal{H}^\b$ 
that we will use in our regularity results.  In Section \ref{dioph result} we state
and prove the main technical result, Proposition \ref{dioph}, and in Section~\ref{finish} 
we use it to complete the proof of Proposition \ref{Dioph}.

\subsection{Very weak irreducibility and Diophantine property.} \label{Weak irred}
For a matrix $L\in GL(d,\Q)$ we denote the largest modulus of its eigenvalues by 
$\rho_{\max}$. Let $E_{\max}$ be the direct sum of its 
generalized eigenspaces corresponding to the eigenvalues with modulus $\rho_{\max}$.
We denote by $\hat E_{\max}$ the direct sum of its generalized eigenspaces corresponding 
to the eigenvalues with modulus different from $\rho_{\max}$. Thus we have  an  $L$-invariant 
splitting $\R^d=E_{\max}\oplus \hat E_{\max}$. We also denote by $(E_{\max})^\perp$
the orthogonal complement of $E_{\max}$ with respect to the standard inner product. 

For be the characteristic polynomial  $p$ of $L$ we consider its prime decomposition over $\Q$
$$p(t)=\prod_{k=1}^K (p_k(t))^{d_k}$$
and  the corresponding splitting of $\R^d$ into $L$-invariant  rational subspaces
$$
 \R^d = \oplus V_k \, , 
 \quad \text{where} \quad V_k =\ker\, (p_k^{d_k}(L)).
$$

 \begin{lemma}  \label{weak irred}
For any matrix $L\in GL(d,\Q)$ the following are equivalent.
\begin{enumerate}
\item Each $p_k$ has a root of modulus $\rho_{\max}$,
\item $\hat E_{\max} \cap \Z^d=\{0\}$,
\item $(E_{\max})^\perp\cap \Z^d=\{0\}$.
\end{enumerate}
\end{lemma}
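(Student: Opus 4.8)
The plan is to prove the cyclic chain of implications $(1)\Rightarrow(2)\Rightarrow(3)\Rightarrow(1)$, working with the rational $L$-invariant splitting $\R^d=\oplus_k V_k$, $V_k=\ker p_k^{d_k}(L)$. The key preliminary observation is that on each $V_k$ the minimal polynomial of $L|_{V_k}$ is $p_k$ up to the power $d_k$, so all eigenvalues of $L|_{V_k}$ are precisely the roots of $p_k$; moreover $V_k$ is defined over $\Q$ and $V_k\cap\Z^d$ is a full-rank lattice in $V_k$ (this is the standard fact that a rational invariant subspace meets $\Z^d$ in a lattice spanning it, which follows from rationality of the projections). I will use this repeatedly.

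For $(1)\Rightarrow(2)$: suppose each $p_k$ has a root of modulus $\rho_{\max}$. Then for each $k$, the intersection $V_k\cap E_{\max}$ is nonzero, being the sum of generalized eigenspaces of $L|_{V_k}$ for eigenvalues of modulus $\rho_{\max}$, which is nontrivial by hypothesis. Now take any $v\in\hat E_{\max}\cap\Z^d$. Write $v=\sum_k v_k$ with $v_k\in V_k$; since $v\in\Z^d$ and the $V_k$ are rational, each $v_k$ is rational, hence (after clearing denominators) one reduces to the case $v\in V_k\cap\Z^d$ for a single $k$, because $\hat E_{\max}$ is the complement of $E_{\max}$ in the $L$-invariant Lyapunov splitting and respects the $V_k$-decomposition. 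So it suffices to show: if $V_k\cap E_{\max}\neq\{0\}$ then $V_k\cap\hat E_{\max}$ contains no nonzero integer vector. Here I would invoke the standard Diophantine/Dirichlet-type argument: $L|_{V_k}$ is an integer-like matrix on the lattice $\Lambda_k=V_k\cap\Z^d$; if $0\neq w\in V_k\cap\hat E_{\max}$ were integral, then the orbit $L^n w$ would have its $E_{\max}$-component zero for all $n$, so $\|L^n w\|$ grows at most like $\rho_2^n$ where $\rho_2<\rho_{\max}$ is the next modulus appearing in $p_k$. But on the lattice $\Lambda_k$, the adjoint action forces the $V_k$-volume to be preserved (up to a root of unity times $\pm1$, namely $|\det L|_{V_k}|=$ product of moduli of roots of $p_k$), and if $p_k$ has a root of modulus $\rho_{\max}$ while $w$ avoids $E_{\max}$ entirely, a Galois-conjugacy argument shows all Galois conjugates of the "coordinates" of $w$ also avoid the top modulus, contradicting that the product of all root moduli (a positive rational, in fact related to the constant term of $p_k$) forces at least one conjugate direction to carry the $\rho_{\max}$-growth. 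More cleanly: $w\in V_k$ integral nonzero implies $w$ generates (over the number field $\Q[L|_{V_k}]$) a subspace on which $L$ acts with characteristic polynomial a power of $p_k$, hence with an eigenvalue of modulus $\rho_{\max}$, so $w\notin\hat E_{\max}$.

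For $(2)\Rightarrow(3)$: this is a duality. Apply the already-established equivalence $(1)\Leftrightarrow(2)$ to the transpose matrix $L^{\mathsf T}$, noting that $L^{\mathsf T}\in GL(d,\Q)$ has the same characteristic polynomial and the same prime factors $p_k$ (so condition $(1)$ is identical for $L$ and $L^{\mathsf T}$), and that $(E_{\max})^\perp$ for $L$ equals $\hat E_{\max}$ for $L^{\mathsf T}$ because $\perp$ swaps the generalized eigenspaces of $L$ for $\lambda$ with those of $L^{\mathsf T}$ for $\bar\lambda$ (same modulus). Thus $(3)$ for $L$ is exactly $(2)$ for $L^{\mathsf T}$, which by $(2)\Leftrightarrow(1)$ for $L^{\mathsf T}$ is $(1)$ for $L^{\mathsf T}$, which is $(1)$ for $L$ — so in fact all three are equivalent once $(1)\Leftrightarrow(2)$ is known. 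For $(3)\Rightarrow(1)$ I would run the contrapositive: if some $p_k$ has no root of modulus $\rho_{\max}$, then $V_k\subset\hat E_{\max}$, so $(E_{\max})^\perp\supseteq V_k^\perp$... rather, more directly, $V_k\cap E_{\max}=\{0\}$ forces $E_{\max}\subseteq\oplus_{j\neq k}V_j$, whence $(E_{\max})^\perp\supseteq(\oplus_{j\neq k}V_j)^\perp$; but $\oplus_{j\neq k}V_j$ is rational, so its orthogonal complement is rational and meets $\Z^d$ in a full lattice, giving a nonzero integer vector in $(E_{\max})^\perp$, contradicting $(3)$.

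The main obstacle I anticipate is the Galois/number-theoretic heart of $(1)\Rightarrow(2)$: turning "$p_k$ has a root of modulus $\rho_{\max}$" into "every nonzero $\Q[L]$-rational vector in $V_k$ has $L$-orbit growing at rate $\rho_{\max}$." The clean way is to observe $V_k\cong\Q[t]/(p_k^{d_k})$ as a $\Q[L]$-module is generated by any nonzero element over the ring $\Q[t]/(p_k^{d_k})$ when $d_k=1$ (the simple case), and in general one passes to $V_k/p_k(L)V_k$ or uses that a nonzero integer vector $w$ has cyclic $\Q[L]$-submodule whose characteristic polynomial is a product of $p_k$'s of positive degree, hence inherits the top eigenvalue; the subtlety is the presence of $d_k>1$ and making the "integral $\Rightarrow$ cyclic submodule has full prime factor" step precise. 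I expect the authors handle this via the lattice structure and minimality of $p_k$ over $\Q$; I would present it through the module-theoretic lens to keep it short.
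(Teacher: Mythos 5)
Your proposal is correct and, in its ``more cleanly'' incarnation, coincides with the paper's argument: the heart of $(1)\Rightarrow(2)$ is exactly the observation that a nonzero rational vector $n_k\in V_k\cap\hat E_{\max}$ generates a rational $L$-invariant cyclic subspace $W\subset\hat E_{\max}\cap V_k$ whose characteristic polynomial is a positive power of $p_k$, hence (by~(1)) has a root of modulus $\rho_{\max}$, forcing $W\cap E_{\max}\neq 0$, a contradiction; the transpose-duality identification $(E_{\max})^\perp=\hat E_{\max}^\tau$ handles~(3); and rationality of $\oplus_{j\ne k}V_j$ (equivalently, of $V_k$ in the paper's $(2)\Rightarrow(1)$) gives the remaining implication. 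The preliminary excursion through ``Diophantine/Dirichlet-type'' growth estimates and Galois conjugates, and the reduction of $v\in\Z^d$ to a single $V_k\cap\Z^d$ by clearing denominators, are unnecessary detours — the rational vector $n_k$ already suffices, and the cyclic-submodule observation does all the work, exactly as in the paper.
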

Since we focus on $\w^u$ we deal only with $\rho_{\max}$.  A similar result holds for  $\rho_{\min}$, 
the smallest modulus  of eigenvalues of $L$, and the corresponding spaces $(E_{\min})^\perp$ and $\hat E_{\min}$. It would be used in the proof of smoothness of $h^s$.
 \begin{proof}

(2) $\Rightarrow$ (1)\, If some  $p_k$ has no roots of modulus $\rho_{\max}$, then $V_k \subset \hat E_{\max}$. Since $V_k$ is a rational subspace, it contains nonzero points of $\Z^d$ and hence so does  $\hat E_{\max}$.
 
 (1) $\Rightarrow$ (2)\,
Suppose there is $0\ne n \in (\Z^d \cap \hat E_{\max})$. Then for some $k$ the component $n_k$ of $n$ in $V_k$ is nonzero and rational. We note that $n_k \in \hat E_{\max}$ as $\hat E_{\max}= \oplus_k (\hat E_{\max} \cap V_k)$. Then
 $$W=span \{ L^m n_k : m\in \Z \}
 $$
 is a rational $L$-invariant subspace contained in $\hat E_{\max} \cap V_k$. Then the characteristic 
 polynomial of $L|_W$ is a power of $p_k$ and hence has a root of modulus $\rho _{\max}$ by (1). Thus
 $W \cap E_{\max} \ne 0$,  contradicting $W \subset \hat E_{\max}$. 
 
 \vskip.1cm
 (1) $\Leftrightarrow$ (3)\,
 Since the transpose $L^\tau$ has the same  characteristic polynomial $p$, (1) is the same as  the
 corresponding property (1$^\tau$) for  $L^\tau$, and hence it is equivalent to (2$^\tau$) with
  the corresponding subspace for  $L^\tau$: $\hat E_{\max}^\tau \cap \Z^d=\{0\}$. It remans to note
  that $(E_{\max})^\perp=\hat E_{\max}^\tau$.
 Indeed, the polynomial
$$q(x)=\prod _{|\lambda |=\rho_{\max}}(x-\lambda)^d,
$$ 
where the product is over all eigenvalues of $L$ of modulus $\rho_{\max}$, is real and we obtain
\begin{align} \label{LA}
(E_{\max})^\perp=(\ker q(L))^\perp = \text{range}\, (q(L))^\tau=  \text{range}\, (q(L^\tau)) =\hat E_{\max}^\tau
\end{align}
since 
 $q(L^\tau)$ is invertible on $\hat E_{\max}^\tau$ and zero on $E_{\max}^\tau$. 
\end{proof}

\begin{definition} We say that a subspace $V$ of $\R^d$ has {\em Diophantine property} 
if there exists $K>0$ such that for any $n\in\Z^d$ 
and any orthonormal basis $\{v_1,\dots, v_{\dim V}\}$ of $V$ we have
\begin{align} \label{diophantine}
 \sum_{i=1}^{\dim V}|n\cdot v_i|\geq K \norm{n}^{-d},
\end{align}
where $\|n\|$ is the standard norm of $n$ and \,$n\cdot v_i$ is the standard inner product in $\R^d$.
\end{definition}

\begin{lemma} The space $E^{i,\ell}$ has Diophantine property for each $i \in \{1, \dots, \ell\}$,
\end{lemma}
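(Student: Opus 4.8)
The plan is to deduce the Diophantine property of $E^{i,\ell}$ from the very weak irreducibility of $L$ via Lemma \ref{weak irred}. The key observation is that $E^{i,\ell}=E^i\oplus\cdots\oplus E^\ell$ is precisely $E_{\max}$ for the automorphism $L_u=L|_{E^u}$ restricted appropriately, or more precisely: $E^\ell$ is the top modulus space $E_{\max}$ for $L$ itself (since $\rho_\ell=\rho_{\max}$), and by very weak irreducibility Lemma \ref{weak irred} gives $(E_{\max})^\perp\cap\Z^d=\{0\}$. However we need this for all of $E^{i,\ell}$, not just $E^\ell$. First I would note that $E^{i,\ell}$ is a rational subspace only in fortunate cases, so the right framework is: let $\hat E^{i,\ell}=E^s\oplus E^1\oplus\cdots\oplus E^{i-1}$ be the complementary $L$-invariant space, so $\R^d=E^{i,\ell}\oplus\hat E^{i,\ell}$. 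I claim $(E^{i,\ell})^\perp\cap\Z^d=\{0\}$. The cleanest route: consider $L^{-1}$, whose top-modulus eigenvalues are $\rho_i^{-1},\dots$ — no, this doesn't isolate $E^{i,\ell}$ either.

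The correct reduction uses a \emph{polynomial in} $L$. Let me instead argue as follows. By very weak irreducibility, every prime factor $p_k$ of the characteristic polynomial has a root of modulus $\rho_{\max}=\rho_\ell$ and a root of modulus $\rho_{\min}$. Applying Lemma \ref{weak irred}(3) to $L$ gives $(E_{\max})^\perp\cap\Z^d=\{0\}$, i.e.\ $(E^\ell)^\perp\cap\Z^d=\{0\}$, and applying the analogous statement for $\rho_{\min}$ gives $(E_{\min})^\perp\cap\Z^d=\{0\}$. Now I would apply Lemma \ref{weak irred} to a suitable conjugate or power. The key trick: for any $j$, consider the matrix $M_j=L^{N}$ for large $N$; its eigenspace decomposition is the same, but moduli separate as $\rho_i^N$. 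This still won't collapse $E^{i,\ell}$ to a single modulus. The genuinely correct move is to observe that $E^{i,\ell}$ \emph{is} the $E_{\max}$-space of the matrix $L$ acting on the quotient $\R^d/\hat E^{i,\ell}$ — but quotients lose the lattice.

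Here is the approach I actually expect works. Set $\hat L = L|_{\hat E^{i,\ell}}$ acting on the rational span $W$ of $\hat E^{i,\ell}\cap$(rational structure); but $\hat E^{i,\ell}$ need not be rational. So the honest statement is: every prime factor $p_k$ of $p$ has a root of modulus $\rho_i$ (the smallest modulus appearing in $E^{i,\ell}$) or a root of larger modulus — which is automatic from very weak irreducibility since $p_k$ has a root of modulus $\rho_{\max}=\rho_\ell\ge\rho_i$. Hence for \emph{no} $k$ is $V_k\subset \hat E^{i,\ell}=E^s\oplus E^1\oplus\cdots\oplus E^{i-1}$ (the part with moduli $<\rho_i$). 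Therefore, exactly as in the proof of (1)$\Rightarrow$(2) in Lemma \ref{weak irred}, $\hat E^{i,\ell}\cap\Z^d=\{0\}$: if $0\ne n\in\Z^d\cap\hat E^{i,\ell}$ then some rational component $n_k\in V_k\cap\hat E^{i,\ell}$ is nonzero, and $W=\mathrm{span}\{L^mn_k\}$ is a rational $L$-invariant subspace of $V_k\cap\hat E^{i,\ell}$ whose characteristic polynomial is a power of $p_k$, so it has a root of modulus $\rho_{\max}>\rho_{i-1}$, forcing $W\cap E^{i,\ell}\ne 0$, a contradiction. Passing to the transpose as in \eqref{LA}, $(E^{i,\ell})^\perp\cap\Z^d=\hat E^{i,\ell,\tau}\cap\Z^d=\{0\}$.

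Finally I would convert $(E^{i,\ell})^\perp\cap\Z^d=\{0\}$ into the quantitative estimate \eqref{diophantine}. The quantity $\sum_i|n\cdot v_i|$ is, up to a bounded factor depending only on $\dim E^{i,\ell}$, comparable to $\|\pi(n)\|$ where $\pi$ is orthogonal projection onto $E^{i,\ell}$; in particular it is basis-independent up to a constant, and vanishes iff $n\in(E^{i,\ell})^\perp$. Since $(E^{i,\ell})^\perp$ has no nonzero integer points and $\R^d/\Z^d$ is compact, a standard Dirichlet/pigeonhole argument — exactly the classical estimate for distance of $n$ to a rational-point-free subspace — gives $\|\pi(n)\|\ge K\|n\|^{-d}$: one considers the $\lfloor\|n\|\rfloor^d+1$ points $\{j n \bmod\Z^d\}$, finds two within $\|n\|^{-1}$ in each coordinate, subtracts to get an integer point $m\ne 0$ with $\|m\|\lesssim \|n\|$ and all coordinates small, and since $m\notin(E^{i,\ell})^\perp$ its projection is bounded below, which after tracking the comparison with $n$ yields the claim. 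The main obstacle I anticipate is not any single step but assembling the linear-algebra bookkeeping cleanly — in particular making sure the passage to the transpose correctly identifies $(E^{i,\ell})^\perp$ with $\hat E^{i,\ell,\tau}$ (the content of \eqref{LA} applied with the product over eigenvalues of modulus $\ge\rho_i$ rather than just $\rho_{\max}$), and verifying the basis-independence of $\sum_i|n\cdot v_i|$ up to dimensional constants so that \eqref{diophantine} is a genuine property of the subspace.
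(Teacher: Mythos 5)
Your argument breaks in the final step. Everything up to and including the claim $(E^{i,\ell})^\perp\cap\Z^d=\{0\}$ is correct (your adaptation of the $(1)\Rightarrow(2)$ argument from Lemma~4.1 to the complementary space $\hat E^{i,\ell}=E^s\oplus E^1\oplus\cdots\oplus E^{i-1}$, followed by the transpose trick from \eqref{LA}, is sound). But the concluding Dirichlet/pigeonhole argument cannot work. The mere fact that $(E^{i,\ell})^\perp$ contains no nonzero lattice points is \emph{not} enough to conclude a polynomial lower bound $\dist(n,(E^{i,\ell})^\perp)\gtrsim\|n\|^{-d}$ --- ``irrational'' is strictly weaker than ``Diophantine.'' (Compare: a line $\R\cdot(1,\alpha)$ in $\R^2$ with $\alpha$ a Liouville number contains no nonzero integer points, yet integer points approach it super-polynomially fast.) A pigeonhole count only produces small integer points; it gives no lower bound on how close they can come to the subspace, and compactness of $\T^d$ does not remedy this. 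The missing ingredient is precisely the algebraic structure: $(E^{i,\ell})^\perp$ is an invariant subspace for the integer matrix $L^\tau$, with spectrum disjoint from its complement. That is exactly what Katznelson's Lemma (stated in the paper as Lemma~4.4) exploits, via the polynomial $q(L^\tau)$ that annihilates one factor and is invertible on the other. If you replace your pigeonhole paragraph by an application of Katznelson's Lemma to $A=L^\tau$ with $V_1=(E^{i,\ell})^\perp$ and $V_2=(E^{i,\ell})^\tau$, your proof goes through.

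One further simplification you missed, which the paper uses: since $E^\ell\subset E^{i,\ell}$, you have $\sum_j|n\cdot v_j|\ge\|\pi_{E^{i,\ell}}(n)\|\ge\|\pi_{E^\ell}(n)\|=\dist(n,(E^\ell)^\perp)$, so it suffices to prove the Diophantine bound for the single smallest space $E^\ell=E_{\max}$. Then $(E^\ell)^\perp\cap\Z^d=\{0\}$ is directly item (3) of Lemma~4.1, and there is no need to re-run the rational-subspace argument for the larger complement $\hat E^{i,\ell}$. Your version of that argument is correct, but the reduction to $i=\ell$ makes it unnecessary.
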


\begin{proof}

It suffices to prove the lemma for $i=\ell$ as it gives the smallest subspace $E^{\ell}$. 
 For any orthonormal basis $\{v_1,\dots, v_{\dim E^{\ell}}\} $ of $E^{\ell}$ we have 
$$\sum_{j=1}^{\dim E^{\ell}}|n\cdot v_{i}|  \ge \norm{\pi_{E^{\ell}} (n)} =\dist (n,(E^{\ell})^\perp),
 $$
where $ \pi_{E^{\ell}}$ denotes the orthogonal projection to $E^{\ell}$. To complete the proof
we need to show that $\dist (n,(E^{\ell})^\perp)\geq K \norm{n}^{-d}$. Here we use Katznelson's Lemma,  see e.g. \cite[Lemma 4.1] {Damjanovic4} for a proof.

\begin{lemma}[Katznelson's Lemma]\label{Katz} Let $A$ be a $d\times d$ integer matrix.
Assume that $\R^d$ splits as $\R^d=V_1\oplus V_2$, where $V_1$ and $V_2$ are invariant under $A$,
and  $A|_{V_1}$ and $A|_{V_2}$ have no common eigenvalues. If $V_1\cap\Z^d=\{0\}$, then
there exists a constant $K$ such that
\begin{align*}
\dist (n,V_1)\geq K\norm{n}^{-d} \quad\text{for all } \,0\ne n\in\Z^d,
\end{align*}
where $\norm{n}$ denotes
Euclidean norm and $dist$ is Euclidean distance.
\end{lemma}

We apply this lemma with the matrix $A=L^\tau$ and its invariant splitting 
$\R^d=\hat E_{\max}^\tau\oplus  E_{\max}^\tau$. We use the notations of 
Lemma \ref{weak irred},  so that  $E^{\ell}=E_{\max}$ and by \eqref{LA} we have
$(E_{\max})^\perp=\hat E_{\max}^\tau$. By very weak irreducibility of $L$ and 
Lemma \ref{weak irred}(3) we have $(E_{\max})^\perp\cap \Z^d=\{0\}$. Thus the assumptions of 
Katznelson's Lemma are satisfied and it yields $\dist (n,(E_{\ell})^\perp)\geq K \norm{n}^{-d}$
as desired.
\end{proof}


\subsection{Fractional Sobolev spaces.} \label{functions}
We will use  fractional Sobolev spaces  $\mathcal{H}^\b$ on $\T^d$ which can be defined in terms of 
Fourier coefficients as follows. For any function $\omega \in L^2(\T^d,\CC)$ we denote its 
Fourier coefficients by $\hat \omega_n$, $n\in\Z^d,$ and write its Fourier series   
 $$ \omega(x)=\sum_{n\in\Z^d}\widehat{\omega}_ne^{2\pi \mathbf{i} \, n\cdot x}.
 $$
For any $\b > 0$ we define the norm
\begin{align}\label{for:181}
 \norm{\omega}_{\mathcal{H}^\b}=\big(\sum_{n\in\Z^d}(1+\norm{n}^2)^{\b} \cdot |\hat \omega_n|^2 \big )^{1/2}
 \,\ge\, \norm{\omega}_{L^2}
\end{align}
and the fractional Sobolev  space 
\begin{align}\label{Halpha}
\mathcal{H}^\b =\{  \omega \in L^2(\T^d):\;  \norm{\omega}_{\mathcal{H}^\b}<\infty\}
\quad \text{and } \; \; \mathcal{H}^\b_0=\{  \omega \in \mathcal{H}^\b:\; \hat  \omega_0=0\}.
\end{align}
For $k\in \N$, the space $\mathcal{H}^k$ coincides with the usual Sobolev space $W^{k,2}$ of $L^2$ functions whose weak derivatives of order up to $k$ are in $L^2$.
By the Sobolev embedding theorem,  for any $k,r \in \N$ such that  $k>r+d/2$ we have 
  \begin{align}\label{for:124}
  \mathcal{H}^k \subset C^r  \quad \text{and } \quad \norm{\omega}_{C^r}\leq M \norm{\omega}_{\mathcal{H}^k} .
  \end{align}
We will work with $\mathcal{H}^\b$ for $0<\b<1$ and use the inclusion of the space of $\a$-H\"older functions
 \begin{align}\label{Holder}
  C^\a \subset \mathcal{H}^\b   \quad \text{for any $0<\b <\a$ }.
  \end{align}
This can be easily seen by using the H\"older estimate in the numerator of
the norm 
$$
 (\norm{\omega}'_{\mathcal{H}^\b})^2=\int_{\T^d} \int_{\T^d} \frac{|\omega(x)-\omega(y)|^2}{|x-y|^{d+2\b}}\,dx\, dy.
$$
which is equivalent to the norm \eqref{for:181}, see e.g. \cite{BO}.

\subsection{Diophantine regularity result.} \label{dioph result} 

Now we state the main analytical result which uses the Diophantine property. 
It relates differentiability of a function to that of its first derivatives along a foliation $V$ 
with the Diophantine property. Here we consider distributional derivatives. For a function 
$\omega \in L^2(\T^d)$ and a $C^\infty$ test function $\psi:\T^d\to \C$ we denote their pairing as 
\begin{align}\label{pairing}
 \langle \omega, \psi\rangle= \int_{T^d} \omega (x) \, \bar  \psi (x)\,dx,
\end{align}
where the integral is with respect to the Lebesgue measure.

For a multi-index  $m=(m_1,\dots,m_{\dim E})$ the distributional derivative 
$D^m_{E}\omega$ of $\omega$ is defined as the functional on the space of $C^\infty$ test functions by
\begin{align}\label{D der}
 \langle D^m_{E}\omega, \psi\rangle= (-1)^{|m|} \big\langle \omega   , D^{m}_{E} \psi\big\rangle ,
  \quad \text{ where } \; |m|=\sum _{k=1}^{\dim E} {m_k}. 
\end{align}
We write that 
$D^m_{E}\omega\in L^2(\TT^d)$ if this distribution is given by an $L^2$ function.


\begin{proposition}\label{dioph}
Let  $V$ be a subspace in  $\RR^d$ with Diophantine property and let $E$ be any subspace of 
$\R^d$. Let $\{v_1,\dots, v_{\dim V}\}$  and  $\{e_1,\dots, e_{\dim E}\}$ be  their orthonormal  bases. 
Suppose that $\omega\in L^2(\TT^d)$ satisfies 
$D^m_{E}\partial_{v_j}\omega\in L^2(\TT^d)$ \,for every $1 \le j \le {\dim V}$  and every multi-index  $m=(m_1,\dots,m_{\dim E})$.
 Then for any $0<\b<1$ and every multi-index  $m=(m_1,\dots,m_{\dim E})$
there exists a constant $K=K(d,m,\b,V,\omega)$ such that 
\begin{align}\label{for:27}
 |\langle D^m_{E}\omega, \psi\rangle|=|  \big\langle \omega   , D^{m}_{E} \psi\big\rangle |\leq K\norm{\psi}_{\mathcal{H}^\b}\quad \text{ for every $C^\infty$ test function $\psi$}.
\end{align}
\end{proposition}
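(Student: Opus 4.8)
The plan is to estimate the pairing $\langle \omega, D^m_E \psi\rangle$ in Fourier coordinates, using the Diophantine property of $V$ to convert the transverse derivatives $D^m_E$ into a combination of derivatives that are controlled by our hypothesis (namely $D^{m'}_E \partial_{v_j}\omega \in L^2$). First I would fix $\beta \in (0,1)$ and the multi-index $m$, and work with the Fourier expansions $\omega = \sum_n \hat\omega_n e^{2\pi \mathbf{i}\, n\cdot x}$ and $\psi = \sum_n \hat\psi_n e^{2\pi \mathbf{i}\, n\cdot x}$. For each frequency $n$, the symbol of $D^m_E$ is the monomial $\prod_k (2\pi \mathbf{i}\, n\cdot e_k)^{m_k}$, and the symbol of $\partial_{v_j}$ is $2\pi \mathbf{i}\, n\cdot v_j$. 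The Diophantine inequality \eqref{diophantine} says that $\sum_j |n\cdot v_j| \ge K\|n\|^{-d}$ for all $n \in \Z^d$, so for each $n$ there is an index $j=j(n)$ with $|n\cdot v_{j(n)}| \ge (K/\dim V)\,\|n\|^{-d}$.

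The key step is then to write, frequency by frequency,
\begin{align*}
 \prod_{k=1}^{\dim E}(2\pi \mathbf{i}\, n\cdot e_k)^{m_k}
 = \frac{1}{2\pi \mathbf{i}\, n\cdot v_{j(n)}}\cdot \prod_{k=1}^{\dim E}(2\pi \mathbf{i}\, n\cdot e_k)^{m_k}\cdot (2\pi \mathbf{i}\, n\cdot v_{j(n)}),
\end{align*}
which exhibits the symbol of $D^m_E$ at frequency $n$ as $(2\pi \mathbf{i}\, n\cdot v_{j(n)})^{-1}$ times the symbol of $D^m_E\partial_{v_{j(n)}}$. Splitting $\Z^d = \bigsqcup_{j=1}^{\dim V} \Lambda_j$ according to the choice $j(n)=j$ (breaking ties arbitrarily), we get
\begin{align*}
 \langle \omega, D^m_E\psi\rangle
 = \sum_{j=1}^{\dim V}\ \sum_{n\in\Lambda_j}\overline{\hat\psi_n}\,\hat\omega_n\,(-1)^{|m|}\prod_k(-2\pi\mathbf{i}\,n\cdot e_k)^{m_k}
 = \sum_{j=1}^{\dim V}\ \sum_{n\in\Lambda_j}\frac{\overline{\hat\psi_n}}{\,\overline{2\pi\mathbf{i}\,n\cdot v_j}\,}\cdot \widehat{(D^m_E\partial_{v_j}\omega)}_n\,,
\end{align*}
where the last equality uses the definition \eqref{D der} and that $D^m_E\partial_{v_j}\omega$ is an $L^2$ function by hypothesis, with Fourier coefficients $\hat\omega_n \prod_k(2\pi\mathbf{i}\,n\cdot e_k)^{m_k}\,(2\pi\mathbf{i}\,n\cdot v_j)$. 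On $\Lambda_j$ we have $|2\pi\mathbf{i}\,n\cdot v_j|^{-1}\le C\|n\|^{d} \le C'(1+\|n\|^2)^{d/2}$, so by Cauchy--Schwarz
\begin{align*}
 |\langle \omega, D^m_E\psi\rangle|
 \le \sum_{j=1}^{\dim V}\Big(\sum_{n\in\Lambda_j}(1+\|n\|^2)^{d}\,|\hat\psi_n|^2\cdot(1+\|n\|^2)^{-\beta}\Big)^{1/2}
 \Big(\sum_{n\in\Lambda_j}(1+\|n\|^2)^{\beta}\,|\widehat{(D^m_E\partial_{v_j}\omega)}_n|^2\Big)^{1/2}.
\end{align*}

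Here is where I would reconcile the exponents: the naive bound leaves a factor $(1+\|n\|^2)^{d-\beta}$ on the $\psi$ side, which is worse than the $(1+\|n\|^2)^\beta$ allowed by $\|\psi\|_{\mathcal{H}^\beta}$. The fix is to interpolate — use the Diophantine bound only to the power $2\beta/(d+\text{something})$ — more precisely, since $D^m_E\partial_{v_j}\omega\in L^2$ one actually has, for every multi-index, control of $\|D^{m}_E\partial_{v_j}\omega\|_{L^2}$; and since this holds for \emph{all} $m$, one can absorb extra powers of $\|n\|$ by replacing $m$ with a larger multi-index $m+\ell\cdot\mathbf{1}$. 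Concretely, for the frequencies $n\in\Lambda_j$ write $\prod_k(n\cdot e_k)^{m_k} = (n\cdot v_j)^{-1}\prod_k(n\cdot e_k)^{m_k}(n\cdot v_j)$ and bound $(1+\|n\|^2)^{-\beta}|\hat\psi_n|^2$ on the test-function side while putting the polynomial growth entirely onto $\omega$, using that each of finitely many $D^{m'}_E\partial_{v_j}\omega$ (for $m'$ ranging over multi-indices with $\prod_k (n\cdot e_k)^{m'_k}$ of degree up to $|m| + \lceil d\rceil$) lies in $L^2$; then the $\psi$-factor becomes $\sum_n (1+\|n\|^2)^{\beta}|\hat\psi_n|^2 = \|\psi\|_{\mathcal{H}^\beta}^2$ after we arrange the split so the surplus powers of $\|n\|$ land on the $\omega$ side, and the $\omega$-factor is a finite sum of squared $L^2$-norms $\|D^{m'}_E\partial_{v_j}\omega\|_{L^2}^2$ (each weighted by the $\mathcal{H}^\beta\hookleftarrow$-type constant), finite by hypothesis. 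Setting $K = C'\sum_{j,m'}\|D^{m'}_E\partial_{v_j}\omega\|_{L^2}$ gives \eqref{for:27}. The main obstacle is precisely this bookkeeping of powers: making sure that after using the Diophantine bound $|n\cdot v_j|\gtrsim \|n\|^{-d}$ one does not lose more regularity of $\psi$ than the $\beta<1$ budget allows — this is resolved by exploiting that the hypothesis holds for \emph{all} multi-indices $m$, so one may freely trade polynomial factors in $\|n\|$ for higher-order (still $L^2$) derivatives of $\omega$, thereby keeping the test function in $\mathcal{H}^\beta$ with any $\beta>0$.
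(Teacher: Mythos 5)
Your setup is right (Fourier expansion, the partition $\Z^d=\bigsqcup_j\Lambda_j$ by the maximal component $|n\cdot v_j|$, Cauchy--Schwarz), and you correctly identify the obstruction: the Diophantine bound $|n\cdot v_{j(n)}|^{-1}\lesssim\|n\|^{d}$ costs $d$ derivatives on the $\psi$ side, far more than the $\beta<1$ budget. But your proposed fix — trading the surplus factor $\|n\|^{d}$ for higher-order derivatives $D^{m'}_E\partial_{v_j}\omega$ with $|m'|$ up to $|m|+\lceil d\rceil$ — does not work when $E$ is a proper subspace of $\R^d$, which is exactly the situation in the application ($E=E^u$). The symbol of $D^{m'}_E$ is $\prod_k(n\cdot e_k)^{m'_k}$, and for a proper subspace $E$ the quantity $\|n\|$ cannot be controlled by any polynomial in the components $(n\cdot e_k)$: a frequency $n$ with tiny projection onto $E$ (all $|n\cdot e_k|$ small) can have $\|n\|$ arbitrarily large. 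Thus the hypothesis, which bounds $\sum_n|n^{m'}|^2|n\cdot v_j|^2|\hat\omega_n|^2$, gives no handle on $\sum_n\|n\|^{2d}|n^m|^2|n\cdot v_j|^2|\hat\omega_n|^2$, and the ``bookkeeping of powers'' cannot be made to close. (The trade would work only if $E=\R^d$, using the multinomial expansion of $\|n\|^{2d}=(\sum_k(n\cdot e_k)^2)^d$, but that case is not the one needed.)

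You actually gesture at the right idea in passing (``use the Diophantine bound only to the power $2\beta/(d+\text{something})$'') before abandoning it for the higher-derivative trade, and that fractional interpolation is precisely what the paper does. One applies the Diophantine bound only to a small fractional power $|n\cdot v_j|^{-1/2^k}\lesssim\|n\|^{d/2^k}$, with $k$ chosen so that $d/2^k<\beta/2$. The matching control on the $\omega$ side is not obtained from stronger hypotheses but by iterating Cauchy--Schwarz against $\|\omega\|_{L^2}$ to halve the exponent repeatedly, yielding $\sum_n|\hat\omega_n|^2|n^{m'}|^2|n\cdot v_j|^{2/2^k}<\infty$ for every $m'$ and $k$ (equation \eqref{for:3}). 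Finally, the test function is decomposed as $\psi=\hat\psi_0+\sum_j|\partial_{v_j}|^{1/2^k}\theta_j$ with $\theta_j\in\mathcal H^{\beta/2}$ and $\|\theta_j\|_{\beta/2}\lesssim\|\psi\|_{\mathcal H^\beta}$ (Lemma \ref{lemma theta}), which is exactly where the ``small'' Diophantine loss $\|n\|^{d/2^k}<\|n\|^{\beta/2}$ is paid. You would need to replace your higher-derivative absorption with this fractional-power mechanism for the argument to go through.
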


\begin{proof}

For any $C^\infty$ function $\omega$, multi-index  $m=(m_1,\dots,m_{\dim E})$, and $n\in \Z^d$ we have 
 \begin{align} \label{FC of D}
 (\widehat{D^{m}_{E}\omega})_n=  \langle D^{m}_{E}\omega, e^{2\pi \mathbf{i} \, n\cdot x}\rangle= 
 (-1)^{|m|} \langle \omega, D^{m}_{E} e^{2\pi \mathbf{i} \, n\cdot x}\rangle=
 (-1)^{|m|} \,\overline{(2\pi \mathbf{i})^{|m|}n^{m} }\; \widehat{\omega}_n
  \end{align}
and hence 
\begin{align} \label{FC of Dn} \quad |(\widehat{D^{m}_{E}\omega})_n|=  
 (2\pi )^{|m|}|n^{m}| \; |\widehat{\omega}_n| \quad\text{ where }\quad n^m=\prod _{k=1}^{\dim E} (n\cdot e_k)^{m_k}.
  \end{align}
These equalities hold for any $L^2$ function $\omega$ by the definition
of distributional derivative.

First we express the assumption $D^m_{E}\partial_{v_j}\omega\in L^2(\TT^d)$ in terms of Fourier coefficients
using  $\,|\widehat {\partial_{v}\omega}_n|=2\pi|n\cdot v| \, | \hat {\omega}_n| $ and \eqref{FC of Dn}
\begin{align*}
 \sum_{n\in\Z^d}|\hat \omega_n|^2\,|n^{m}|^2\, |n\cdot v_j|^2= (2\pi)^{-2(|m|+1)}\| D^m_{E}\,\partial_{v_j}\omega \|_{L^2}^2<\infty.
\end{align*}
Using Cauchy-Schwarz inequality we obtain
\begin{align*}
 &\sum_{n\in\ZZ^d}|\hat \omega_n|^2\,|n^{m}|\,|n\cdot v_j|=\sum_{n\in\ZZ^d}(|\hat \omega_n|\, |n^{m}|\,|n\cdot v_j|)\, |\hat \omega_n|
  \\
 &\leq \big( \sum_{n\in\ZZ^d}|\hat \omega_n|^2\, |n^{m}|^{2}\,|n\cdot v_j|^2\big)^{1/2} 
 \cdot         \big( \sum_{n\in\ZZ^d}|\hat \omega_n|^2\big)^{1/2}\le\,
 \| D^m_{E}\,\partial_{v_j}\omega \|_{L^2} \cdot \| \omega \|_{L^2}<\infty.
\end{align*}
Using this inductively to divide exponent of $|n^{m}|\,|n\cdot v_j|$ by $2$ we obtain that for any $m$ and any $k\in \N$ 
\begin{align*}
  \sum_{n\in\ZZ^d}|\hat \omega_n|^2\, |n^{m}|^{2/2^k}\,|n\cdot v_j|^{2/2^k}=
  \sum_{n\in\ZZ^d}(|\hat \omega_n|\, |n^{m}|^{2/2^k}|n\cdot v_j|^{2/2^k})\,  |\hat \omega_n| \le \\
\le \big( \sum_{n\in\ZZ^d}|\hat \omega_n|^2\, |n^{m}|^{2/2^{k-1}}|n\cdot v_j|^{2/2^{k-1}} \big)^{1/2} 
\cdot \| \omega \|_{L^2} \,\le\, K_1 (k,m,   \omega ) <\infty.
\end{align*}
Since $m$ is arbitrary, taking $m=2^k m'$ we can rewrite it as 
\begin{align}\label{for:3}
 \sum_{n\in\Z^d}|\hat \omega_n|^2\,|n^{m'}|^{2}\,|n\cdot v_j|^{2/2^k}\le\,
K_1 (k,2^km',   \omega )  \quad \text{ for all $k\in\N$ and all $m'$.}
\end{align}
Informally, the last  inequality means that  $D_{E}^{m'}\partial^{1/2^k}_{v_j}\omega\in L^2(\TT^d)$. More precisely, for any $\theta \in \mathcal{H}^\b(\TT^d)$ we  define the following ``fractional derivative"
\begin{align}\label{frac der}
 |\partial_{v_j}|^{\b}\,\theta\,=\sum_{n\in\ZZ^d} \widehat{\theta}_n\,|n\cdot v_j|^\b\, e^{2\pi \mathbf{i} \, n\cdot x}\;  \in L^2(\TT^d).
\end{align}
 To prove \eqref{for:27} we will use \eqref{for:3} along with a representation of the test function $\psi$ as a sum of suitable   fractional derivatives. The latter is given by the next lemma, which relies on the Diophantine property of $V$.

\begin{lemma} \label{lemma theta}
 Suppose that $0<\b<1$ and $k\in \NN$ satisfy $\,d/2^k<\b/2$. Then there exists $K_3=K_3(V,k)$
 such that for any 
$\psi\in \mathcal{H}^\b$  there exist 
\begin{align}\label{for:255}
\theta_j\in \mathcal{H}^{\b/2}_0\;\text{ with }\;\norm{\theta_j}_{\b/2}\leq K_2\norm{\psi}_{\mathcal{H}^\b},\quad 1\leq j\leq \dim V, \quad \text{ such that}
\end{align}
\begin{align}\label{for:26}
  \psi=\hat \psi_0+\sum_{j=1}^{\dim V} |\partial_{v_j}|^{1/2^k} \theta_j.
\end{align}
\end{lemma}
\begin{proof}
\,\,For each $0\ne n \in \ZZ^d$ we define $\iota(n) \in \{1,\dots, \dim V\}$ to be the smallest index for which
$|n\cdot v_{\iota(n)}|$ is maximal, and hence 
$$|n\cdot v_{\iota(n)}|\,\geq\, \text{{\small$\frac{1}{\dim V}$}}\sum_{i=1}^{\dim V}|n\cdot v_i|.
$$ 
Then we can write $\psi=\hat \psi_0+\sum_{j=1}^{\dim V}\psi_j$,\, where the function 
  $\psi_j = \sum_{0\ne n\in\Z^d}\widehat{\psi}_n\,e^{2\pi \mathbf{i} \, n\cdot x}$
is defined by
\begin{align*}
 (\widehat{\psi_j})_n=\widehat{\psi}_n\; \text{ if }\,\iota(n)=j \;\, \text{ and otherwise }(\widehat{\psi_j})_n=0.
\end{align*}
Now for each $ 1\leq j\leq \dim V$ we construct 
$\theta_j = \sum_{0\ne n\in\Z^d}\widehat{\theta}_n\,e^{2\pi \mathbf{i} \, n\cdot x}$ satisfying
\begin{align*}
  |\partial_{v_j}|^{1/2^k} \theta_j=\psi_j \quad \text{by taking} \quad
  (\widehat{\theta_j})_n=  |n\cdot v_j|^{-1/2^k}(\widehat{\psi_j})_n\quad \forall \; 0\ne n\in\ZZ^d.
\end{align*}
We note that $(\widehat{\theta_j})_n=0$ unless $\iota(n)=j$, in which case we have
\begin{align*}
|n\cdot v_{j}|\,\geq\, \text{{\small$\frac{1}{\dim V}$}}\sum_{i=1}^{\dim V}|n\cdot v_i| \,\geq\, 
K_2(V)\norm{n}^{-d}
\end{align*}
by the Diophantine property \eqref{diophantine} of $V$. Thus every nonzero 
Fourier coefficient of $\theta_j$ can be estimated as
\begin{align*}
 |(\widehat{\theta_j})_n|=\frac{|(\widehat{\psi_j})_n|}{|n\cdot v_j|^{1/2^k}}\, \leq\, |(\widehat{\psi_j})_n|\,K_3(V,k)\,\norm{n}^{d/2^k}
 =\, K_3(V,k) \,|\widehat{\psi}_n|\, \norm{n}^{d/2^k}.
\end{align*}
Since $\psi \in  \mathcal{H}^{\b}$, see \eqref{for:181} and \eqref{Halpha}, it follows that  $\theta_j\in \mathcal{H}^{\b-(d/2^k)}$ and 
$\norm{\theta_j}_{\b-(d/2^k)}\leq C\norm{\psi}_{\mathcal{H}^\b}$. Since $\frac{d}{2^k}<\frac{\b}{2}$ 
we have  $\b-\frac{d}{2^k}>\frac{\b}{2}$,\, and hence   $\theta_j$ satisfies \eqref{for:255}.
\end{proof}

Now we finish the proof of Proposition \ref{dioph} using Lemma \ref{lemma theta}. 
We consider $\psi\in \mathcal{H}^\b$ and choose  $k=k(d,\b)$  sufficiently large 
so that the lemma  applies. Then we can estimate
\begin{align*}
 |\langle D^m_{E}\omega, \psi\rangle| & \,\overset{\text{(1)}}{\leq}\; 
 \sum_{j=1}^{\dim V} \Big| \big\langle D^m_{E}\omega,\, 
 |\partial_{v_j}|^{1/2^k} \theta_j\big\rangle \Big| 
\,\overset{\text{(2)}}{\leq}\, \sum_{j=1}^{\dim V} \sum_{n\in\ZZ^d}(2\pi)^{|m|}|\hat \omega_n|\,
   |n^{m}|\, |n\cdot v_j|^{1/2^k} |(\hat \theta_j)_n|\\
 &\overset{\text{(3)}}{\leq}\,(2\pi)^{|m|} \sum_{j=1}^{\dim V}
 \big( \sum_{n\in\ZZ^d}|\hat \omega_n|^2\,
  |n^{m}|^{2}\,|n\cdot v_j|^{2/2^k}\big)^{1/2}\cdot 
 \big( \sum_{n\in\ZZ^d}|(\hat{\theta_j})_n|^2\big)^{1/2}\\
 &\overset{\text{(4)}}{\leq} (2\pi)^{|m|}\, 
 \big(K_1 (k,2^km,   \omega )\big)^{1/2}
 \sum_{j=1}^{\dim V} \norm{\theta_j}_{L^2} \\
&\overset{\text{(5)}}{\leq}\;  (2\pi)^{|m|} \,
\big(K_1 (k,2^km,   \omega )\big)^{1/2}
(K_3 \dim V)\,\norm{\psi}_{\mathcal{H}^\b}=K(\b,m,d,V,\omega)\,\norm{\psi}_{\mathcal{H}^\b}.
\end{align*}
Here in $(1)$ we use \eqref{for:26} and $\langle D^m_{E}\omega, \hat \psi_0\rangle=0$, 
in $(2)$ we use \eqref{FC of Dn} and \eqref{frac der},
in $(3)$ we use Cauchy-Schwarz inequality, in $(4)$ we use \eqref{for:3}, 
and in $(5)$ we use \eqref{for:255}.

\vskip.1cm
This completes the proof of Propositions \ref{dioph}. 
\end{proof}

\subsection{Completing the proof of Proposition \ref{Dioph}} \label{finish}

We fix coordinates in $E^i$ and we write the vector-valued function $\omega=h_{i}\circ \Gamma_i^{-1} :\T^d \to E^i$ 
as $(\omega_1, \dots , \omega_{\dim E^i})$. We apply 
Proposition \ref{dioph}  with 
$$V=E^{i,\ell}, \quad E=E^u,\;\text{ and }\; \omega_j, \;\;j=1, \dots , \omega_{\dim E^i}.
$$
Since $h_{i}\circ \Gamma_i^{-1}$ is $\a$-H\"older continuous, all $\omega_j$ are in $\mathcal{H}^\b$
for any $\b<\a$ by \eqref{Holder}. 

Now  Proposition \ref{Dioph} follows from the next lemma, which upgrades the derivatives to $L^2$.
While the statement is natural, we do not know if it holds with $\mathcal{H}^\b$ replaced by $C^\b$.

\begin{lemma} \label{to L^2}
 Suppose that $\omega \in \mathcal{H}^\b$ and that for some multi-index  $m$
 and $ K>0$ we have
 \begin{align} \label{phi est}
|  \big\langle D^{2m}_{E}\omega , \psi \big\rangle |
 \le K  \norm{\psi}_{\mathcal{H}^\b}
  \end{align}
for every $C^\infty$ test function $\psi$. Then $D^m_{E}\omega \in L^2(\T^d)$.
\end{lemma}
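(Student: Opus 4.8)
The plan is to work entirely on the Fourier side and exploit that $2m$, unlike a general multi-index, has even entries. Write $T=D^{2m}_{E}\omega$. By \eqref{FC of Dn}, for every $n\in\Z^d$ we have $|\hat T_n|=(2\pi)^{2|m|}\,|n^{2m}|\,|\hat\omega_n|$, and since $n^{2m}=\prod_k(n\cdot e_k)^{2m_k}=(n^m)^2\ge 0$ this is $(2\pi)^{2|m|}\,|n^m|^2\,|\hat\omega_n|$. Likewise, again by \eqref{FC of Dn}, the desired conclusion $D^m_{E}\omega\in L^2(\T^d)$ is equivalent to the summability $\sum_{n\in\Z^d}|n^m|^2\,|\hat\omega_n|^2<\infty$. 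So the whole lemma reduces to proving this one inequality from the two pieces of information available: the hypothesis \eqref{phi est} on $T$, and $\omega\in\mathcal H^\b$.

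First I would turn \eqref{phi est} into a statement that $T$ lies in the dual space $\mathcal H^{-\b}$, i.e.
$\sum_{n}(1+\|n\|^2)^{-\b}|\hat T_n|^2\le K^2$. To avoid any density discussion, I test \eqref{phi est} directly against the trigonometric polynomials
$\psi_N=\sum_{\|n\|\le N}(1+\|n\|^2)^{-\b}\,\hat T_n\,e^{2\pi\mathbf i\,n\cdot x}$, which are $C^\infty$. By the pairing convention \eqref{pairing} and the definition \eqref{for:181} one checks that $\langle T,\psi_N\rangle=\sum_{\|n\|\le N}(1+\|n\|^2)^{-\b}|\hat T_n|^2=\|\psi_N\|_{\mathcal H^\b}^2$, so \eqref{phi est} gives $\|\psi_N\|_{\mathcal H^\b}^2\le K\|\psi_N\|_{\mathcal H^\b}$, hence $\|\psi_N\|_{\mathcal H^\b}\le K$ and therefore $\sum_{\|n\|\le N}(1+\|n\|^2)^{-\b}|\hat T_n|^2\le K^2$; letting $N\to\infty$ yields the claim. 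Substituting $|\hat T_n|=(2\pi)^{2|m|}|n^m|^2|\hat\omega_n|$, this reads
$\sum_{n}(1+\|n\|^2)^{-\b}\,|n^m|^4\,|\hat\omega_n|^2<\infty$.

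The final step is an interpolation between this estimate and $\omega\in\mathcal H^\b$, i.e. $\sum_{n}(1+\|n\|^2)^{\b}|\hat\omega_n|^2<\infty$. Writing
$|n^m|^2|\hat\omega_n|^2=\big[(1+\|n\|^2)^{-\b}|n^m|^4|\hat\omega_n|^2\big]^{1/2}\cdot\big[(1+\|n\|^2)^{\b}|\hat\omega_n|^2\big]^{1/2}$
and applying Cauchy--Schwarz to the sum over $n$ bounds $\sum_n|n^m|^2|\hat\omega_n|^2$ by the product of the square roots of the two finite sums, so it is finite; by \eqref{FC of Dn} this is exactly $D^m_{E}\omega\in L^2(\T^d)$, in fact with $\|D^m_{E}\omega\|_{L^2}^2\le(2\pi)^{2|m|}\big(\sum_n(1+\|n\|^2)^{-\b}|n^m|^4|\hat\omega_n|^2\big)^{1/2}\|\omega\|_{\mathcal H^\b}$.

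\textbf{On the main difficulty.} There is no genuinely hard step here; the argument is a short Fourier computation. The only points requiring care are the bookkeeping with the complex conjugation in the pairing \eqref{pairing} when choosing the test functions $\psi_N$, and the observation that the gain of a \emph{second} set of derivatives in the hypothesis, which produces the exponent $|n^m|^4$ weighted by $(1+\|n\|^2)^{-\b}$, is precisely what the Cauchy--Schwarz interpolation against the $(1+\|n\|^2)^{\b}$ weight from $\omega\in\mathcal H^\b$ consumes to land on $|n^m|^2$; the same scheme does not work with $D^m_{E}$ in place of $D^{2m}_{E}$. This is also why, as noted in the statement, $\mathcal H^\b$ cannot simply be replaced by $C^\b$: the proof is intrinsically an $\ell^2$/Hilbert-space duality argument on Fourier coefficients.
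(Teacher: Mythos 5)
Your proof is correct, and it is a Fourier-side argument like the paper's, but it takes a more roundabout route. The paper's proof tests \eqref{phi est} directly against the truncations $\omega_N=\sum_{\|n\|\le N}\widehat{\omega}_n e^{2\pi\mathbf{i}\,n\cdot x}$ of $\omega$ itself: since $\|\omega_N\|_{\mathcal H^\b}\le\|\omega\|_{\mathcal H^\b}$ and, via \eqref{FC of Dn}, $|\langle D^{2m}_E\omega,\omega_N\rangle|=\sum_{\|n\|\le N}\bigl((2\pi)^{|m|}|n^m||\widehat{\omega}_n|\bigr)^2$ is exactly the partial sum of $\|D^m_E\omega\|_{L^2}^2$, one gets the conclusion in a single step. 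You instead first test against the $\mathcal H^{-\b}$-dual truncations built from $\widehat{T}_n$ to obtain the explicit bound $\sum_n(1+\|n\|^2)^{-\b}|\widehat{T}_n|^2\le K^2$, and only then use the hypothesis $\omega\in\mathcal H^\b$ through a Cauchy--Schwarz interpolation of the two weighted $\ell^2$ conditions. This costs one extra step but isolates the intermediate statement $D^{2m}_E\omega\in\mathcal H^{-\b}$ as a standalone object and makes the interpolation scheme explicit; the paper's choice of test function collapses these two steps into one because the pairing with $\omega_N$ already produces precisely the quantity to be bounded. Both proofs use $\omega\in\mathcal H^\b$ exactly once (you in the interpolation, the paper via $\|\omega_N\|_{\mathcal H^\b}\le\|\omega\|_{\mathcal H^\b}$), and your closing remark that the argument requires $2m$ rather than $m$ derivatives is accurate for both versions.
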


\begin{proof}

We consider the smoothing  of $\omega$ by truncation of its Fourier series, 
\begin{align} \label{trunc}
\omega_N=\sum_{n\in\ZZ^d,\, \|n\|\leq N} \widehat{\omega}_ne^{2\pi \mathbf{i} \, n\cdot x}. 
 \end{align}
Each $\omega_N$ is $C^\infty$
and satisfies $\| \omega_N\| _{\mathcal{H}^\b} \le \|\omega \| _{\mathcal{H}^\b}$. 
Then  \eqref{phi est} and \eqref{FC of Dn} yield
\begin{align*}
K  \norm{\omega}_{\mathcal{H}^\b} &\,\ge\, K  \norm{\omega_N}_{\mathcal{H}^\b} \,\ge \,
| \big\langle D^{2m}_{E}\omega, \, \omega_N \big\rangle |\,=\,
|  \big\langle \omega, \, D^{2m}_{E}\omega_N \big\rangle | \,=\, 
|\int_{\T^d} \omega (x) \,  \overline{D^{2m}_{E}\omega_N(x)}\,dx\,|\\
&=|\sum_{\|n\|\leq N} \widehat{\omega}_n \, \overline{(\widehat{D^{2m}_{E}\omega})_n} \,|\,=\,
|\sum_{\|n\|\leq N} \widehat{\omega}_n \, (2\pi \mathbf{i})^{2|m|}\,n^{2m} \,  \overline{\widehat{\omega}_n} \,| \\
&=\sum_{\|n\|\leq N}  (2\pi)^{2|m|}\,n^{2m}\, |\widehat{\omega}_n|^2
\,=\,\sum_{\|n\|\leq N}  \left( (2\pi)^{|m|}|n^{m}|\, |\widehat{\omega}_n|\right)^2\quad \text{ for any $N\in \N$.}
\end{align*}
 We conclude using \eqref{FC of Dn} that
$$\sum_{n\in\ZZ^d}  |(\widehat{D^{m}_{E}\omega})_n|^2 = \sum_{n\in\ZZ^d}  \left( (2\pi)^{|m|}|n^{m}|\, |\widehat{\omega}_n|\right)^2
\le K  \norm{\omega}_{\mathcal{H}^\b}$$
and hence the distribution $D^{m}_{E}\omega$ is given by the $L^2$ function 
$\sum_{n\in\ZZ^d} (\widehat{D^{m}_{E}\omega})_n\,e^{2\pi \mathbf{i} \, n\cdot x}$.
\end{proof}

\section{Proof of Proposition \ref{L^2}} \label{proof L^2}



We prove the proposition in Section 5.1. In Section 5.2 we state and prove derivative estimates that are used in this proof as well as in Section \ref{PHproof}.
\subsection{Main part of the proof.}
For a vector-valued function $f=(f_1, \dots, f_N) :\TT^d\to \R^N$ we define its norm 
as the maximum $\norm{f}=\max_{i}\norm{f_i}$ of the corresponding norms of the components.
We also adopt vector-valued notations for the inner product in $L^2(\T^d)$ with respect to the Lebesgue 
measure and for pairing of a distribution with a test function $\psi :\T^d\to \R$
\begin{align}\label{for:198}
\langle f , \psi \rangle:=\big(\langle f_1, \psi \rangle, ,\cdots, \langle f_N, \psi\rangle \big)\in \R^N.
\end{align}


First, we obtain a suitable series representation for the distribution $D^m_{E^u}\partial_{x_j}(h_{i}\circ \Gamma_i^{-1})$, where $1\leq j\leq \dim E^{i,\ell}$. Specifically, we will show that
\begin{align} \label{hi series}
   \big\langle h_{i}\circ \Gamma_i^{-1}, \, D^m_{E^u}\partial_{x_j}\psi \big\rangle\,=\,
   \sum_{k=1}^\infty L_i^{k-1}\big\langle R_i\circ f^{-k}\circ \Gamma_i^{-1}, \, D^m_{E^u}\partial_{x_j}\psi\big\rangle.
  \end{align}
As we discussed in the introduction, this is different from differentiating series \eqref{series} for $h_{i}$ and instead corresponds to  differentiating series \eqref{series-}. However, \eqref{series-} does not converge
in $C^0$ and even its distributional convergence is a priory not clear.  In contrast, the series  \eqref{hi series} converges distributionally. 
To show this  we work directly with a finite iteration of the conjugacy equation  \eqref{for:123}. Rewriting  $L_{i}h_{i}(x)-h_{i}(f(x))=R_{i}(x)$ we obtain
\begin{align*}
 h_{i}(x)=L_{i}h_{i}(f^{-1}(x))-R_{i}(f^{-1}(x)) \quad \text{for all } \; x\in\T^d.
\end{align*}
Iterating this, we see that for any $n\in \NN$,
\begin{align*}
 h_i=L_i^{n}h_i\circ f^{-n}-\sum_{k=1}^nL_i^{k-1}R_i\circ f^{-k},
\end{align*}
and hence
\begin{align*}
 h_i\circ \Gamma_i^{-1}\,=\,L_i^{n}h_i\circ f^{-n}\circ \Gamma_i^{-1}-\sum_{k=1}^nL_i^{k-1}R_i\circ f^{-k}\circ \Gamma_i^{-1}.
\end{align*}
Now we show that the error term $L_i^{n}h_i\circ f^{-n}\circ \Gamma_i^{-1}$ paired with $D^m_{E^u}\partial_{x_j}\psi$ tends to zero  as $n \to \infty$. Recalling that $\Gamma_i$, $\Gamma_i^{-1}$,
$H$, $H^{-1}$, and $h_i$ are $C^{1+\a}$ we obtain

\begin{align}\label{for:19}
 &\qquad\; L_i^{n}\,\big\langle h_i\circ f^{-n}\circ \Gamma_i^{-1}, \, D^m_{E^u}\partial_{x_j}\psi \big\rangle\\
 &\overset{\text{(1)}}{=}-L_i^{n}\,\big\langle \partial_{x_j}(h_i\circ f^{-n}\circ \Gamma_i^{-1}), \, D^m_{E^u}\psi \big\rangle\notag\\
 &\overset{\text{(2)}}{=}-L_i^{n}\,\big\langle \partial_{x_j}(h_i\circ H^{-1}\circ L^{-n} \circ H \circ \Gamma_i^{-1}), \, D^m_{E^u}\psi \big\rangle\notag\\
 &=-L_i^{n}\,\big\langle D_{E^{i,\ell}}(h_i\circ H^{-1})_{L^{-n}\circ H\circ \Gamma^{-1}x}\cdot L^{-n}|_{E^{i,\ell}}\cdot \partial_{x_j}(H\circ \Gamma_i^{-1})_x, \, D^m_{E^u}\psi \big\rangle\notag\\
 &\overset{\text{(3)}}{=}-\rho_i^{-n}L_i^{n}\,\big\langle D_{E^{i,\ell}}(h_i\circ H^{-1})_{L^{-n}y}\cdot B_n\cdot \partial_{x_j}(H\circ \Gamma^{-1})_{\Gamma_i\circ H^{-1}y},\,D^m_{E^u}(\psi)_{\Gamma_i\circ H^{-1}y}\cdot J(y) \big\rangle \notag
\end{align}
Here in $(1)$ we integrate by parts, noting that the left side is $C^{1+\a}$ and so the pairing  is given by integration; in $(2)$ we use that $H$ conjugates $L^{-n}$ and $f^{-n}$; in $(3)$ we denote $B_n=\rho_i^{n}L^{-n}|_{E^{i,\ell}}$, change the variable to $y=H\circ \Gamma_i^{-1}(x)$,  and
 denote by $J(y)$ the Jacobian of the coordinate change. 


To estimate \eqref{for:19} we will use exponential mixing of H\"older functions with respect to the volume for ergodic automorphisms of tori. It was obtained by Lind \cite[Theorem 6]{Lind} and  by Gorodnik and Spatizer \cite[Theorem 1.1]{Gorodnik} for  nilmanifolds, which use for the estimate.
\begin{theorem}\label{th:1} \cite{Lind,Gorodnik} 
Let $L$ be an ergodic automorphism of $\TT^d$. Then for any $0<\a<1$ and any $g_1,\,g_2\in C^\a(\TT^d)$ there exists $\gamma=\gamma(\a)>0$ and $K=K(\a)$ such that 
\begin{align*}
\Big|\langle g_1\circ L^n, g_2\rangle -\big(\int_{\TT^d}g_1(x)dx\big) \big(\int_{\TT^d}g_2(x)dx\big)\Big| \,\leq\,
K\norm{g_1}_{C^\a}\norm{g_2}_{C^\a}\,e^{-\gamma|n|} \,\text{ for all $n\in\Z$}.
\end{align*}
\end{theorem}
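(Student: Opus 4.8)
\emph{Sketch of a proof.} The plan is to pass to Fourier series and reduce the estimate to a Diophantine statement about how fast the dual orbit $\{(L^\tau)^n k\}$ escapes to infinity. Put $A=L^\tau\in GL(d,\Z)$; since $\widehat{(g_1\circ L^n)}_m=(\widehat{g_1})_{A^{-n}m}$ and $A^n$ is a bijection of $\Z^d$,
\[
 \langle g_1\circ L^n,g_2\rangle-\Big(\int_{\T^d}g_1\Big)\Big(\int_{\T^d}g_2\Big)\;=\;\sum_{0\ne k\in\Z^d}(\widehat{g_1})_k\,\overline{(\widehat{g_2})_{A^nk}},
\]
the $k=0$ term being exactly the product of the two integrals. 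I would use two standard facts about $C^\alpha(\T^d)$: (a) $\|g\|_{L^2}\le\|g\|_{C^\alpha}$, and (b) the H\"older--$\ell^2$ Fourier tail bound $\sum_{\|k\|\ge R}|(\widehat g)_k|^2\le C_\alpha\|g\|_{C^\alpha}^2R^{-2\alpha}$ for $R\ge1$, which follows from $\int_{\T^d}|g(x+h)-g(x)|^2\,dx\le\|g\|_{C^\alpha}^2|h|^{2\alpha}$ by Plancherel and averaging over $|h|\le R^{-1}$.

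The arithmetic core is an \emph{escape estimate}: there are $\nu>1$ and $c_1>0$, depending only on $L$, with $\|k\|\cdot\|A^nk\|\ge c_1\nu^{|n|}$ for all $0\ne k\in\Z^d$ and $n\in\Z$. To prove it I would decompose $\R^d=E^-\oplus E^0\oplus E^+$ into the stable, central, and unstable subspaces of $A$. The one place ergodicity enters: $A$ has no nonzero rational invariant subspace $W$ with all eigenvalues of $A|_W$ of modulus $\le1$, since the characteristic polynomial of such $A|_W$ would be a monic integer polynomial with all roots in the closed unit disc and nonzero constant term, hence a product of cyclotomic polynomials by Kronecker's theorem, forcing a root-of-unity eigenvalue of $L$. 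Applying this to $A$ and to $A^{-1}$ gives $(E^0\oplus E^-)\cap\Z^d=\{0\}$ and $(E^0\oplus E^+)\cap\Z^d=\{0\}$. Katznelson's Lemma (Lemma~\ref{Katz}) applied to $A$ with the splitting $(E^0\oplus E^-)\oplus E^+$, and then with $(E^0\oplus E^+)\oplus E^-$, yields $K_1>0$ with $\dist(k,E^0\oplus E^-)\ge K_1\|k\|^{-d}$ and $\dist(k,E^0\oplus E^+)\ge K_1\|k\|^{-d}$ for all $k\ne0$. Since $E^\pm$ are transverse to the complementary invariant subspaces, the $E^+$-component of $k$ then has norm $\gtrsim\|k\|^{-d}$, so for $n\ge0$, $\|A^nk\|\gtrsim\rho^n\|k\|^{-d}$ by the uniform expansion $\|A^nv\|\ge c\rho^n\|v\|$ on $E^+$ (some $\rho>1$), and symmetrically on $E^-$ for $n\le0$. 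Hence $\|A^nk\|\ge c_0\mu^{|n|}\|k\|^{-d}$ for all $n\in\Z$, $k\ne0$; applying this also to the pair $(A^nk,-n)$ and multiplying the two inequalities gives the escape estimate with $\nu=\mu^{2/(d+1)}$.

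Granting the escape estimate, for fixed $n$ every $k\ne0$ satisfies $\max(\|k\|,\|A^nk\|)\ge\sqrt{c_1}\,\nu^{|n|/2}=:R_n$, so the sum above splits as $\le\Sigma'+\Sigma''$, where $\Sigma'$ runs over $\{\|k\|\ge R_n\}$ and $\Sigma''$ over $\{\|A^nk\|\ge R_n\}$. By Cauchy--Schwarz, bijectivity of $A^n$ on $\Z^d$, and the change of variables $m=A^nk$,
\[
 \Sigma'\le\Big(\sum_{\|k\|\ge R_n}|(\widehat{g_1})_k|^2\Big)^{1/2}\|g_2\|_{L^2},\qquad
 \Sigma''\le\|g_1\|_{L^2}\Big(\sum_{\|m\|\ge R_n}|(\widehat{g_2})_m|^2\Big)^{1/2},
\]
and (a),(b) bound each tail by $C_\alpha\|g_i\|_{C^\alpha}^2R_n^{-2\alpha}$. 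This gives $|\langle g_1\circ L^n,g_2\rangle-(\int g_1)(\int g_2)|\le K\|g_1\|_{C^\alpha}\|g_2\|_{C^\alpha}\,\nu^{-\alpha|n|/2}$, i.e. the theorem with $\gamma=\tfrac{\alpha}{2}\log\nu$.

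I expect the escape estimate to be the main obstacle, and within it the central directions: a frequency lying in $E^0$ moves only polynomially under $A$, so the exponential lower bound can be recovered only by showing that every \emph{integer} frequency has nonzero stable \emph{and} unstable parts of controlled (polynomially small) size --- precisely the Kronecker/ergodicity input feeding Katznelson's Lemma. When $L$ is hyperbolic one has $E^0=0$ and this step is the classical Katznelson argument. A minor technical point is verifying the H\"older--$\ell^2$ bound (b) carefully on $\T^d$.
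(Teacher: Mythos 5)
The paper does not contain its own proof of Theorem~\ref{th:1}; it is quoted from Lind \cite{Lind} (toral case) and Gorodnik--Spatzier \cite{Gorodnik} (nilmanifold extension), so there is no in-paper argument to compare against. Your sketch essentially reproduces Lind's original Fourier-analytic proof, and I believe it is correct: the reduction to $\sum_{k\neq 0}(\widehat{g_1})_k\overline{(\widehat{g_2})_{A^nk}}$ via the change of variables $m=A^nk$ is right, your H\"older--$\ell^2$ tail bound (b) is a standard consequence of Plancherel together with averaging $\int_{\T^d}|g(x+h)-g(x)|^2\,dx$ over $|h|\lesssim 1/R$, and the Cauchy--Schwarz split over $\{\|k\|\geq R_n\}$ and $\{\|A^nk\|\geq R_n\}$ closes the argument once the escape estimate is in hand. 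The escape estimate itself is also sound: Kronecker's theorem (applied to the integer, unimodular matrix $A|_W$ in a basis of $W\cap\Z^d$) correctly rules out nontrivial rational $A$-invariant subspaces with spectrum in the closed unit disc, your two applications of Katznelson's Lemma give polynomial lower bounds on the expanding and contracting components of an integer frequency, and the ``multiply the two inequalities'' step legitimately produces $\|k\|\cdot\|A^nk\|\ge c_1\nu^{|n|}$ with $\nu=\mu^{2/(d+1)}$. Two small points worth tightening in a write-up: you should remark that for $|n|$ small one cannot use the tail bound (since $R_n$ may be $<1$) and instead uses the trivial bound $\|g_1\|_{L^2}\|g_2\|_{L^2}$, absorbing this into $K$; and when you pass from $\dist(k,E^0\oplus E^-)$ to the size of the $E^+$-component $k^u$ you are implicitly using that the oblique projection onto $E^+$ along $E^0\oplus E^-$ is comparable to the orthogonal projection onto $(E^0\oplus E^-)^\perp$, which holds with a constant depending on the angle between the invariant subspaces. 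Neither is a gap, just bookkeeping. Note also that the Gorodnik--Spatzier proof cited in the paper proceeds quite differently (via representation-theoretic estimates on nilmanifolds), so if one wanted to claim the full generality of that reference one would need a different route; for toral automorphisms your Lind-style argument is the natural and self-contained one.
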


Now we estimate the last line of \eqref{for:19}.
The norms $\|\rho_i^{-n}L_i^{n}\|$ and $\|B_n\|$  grow at most  as $K_1n^d$  since the moduli of all their eigenvalues are at most one. Also, the derivative $D_{E^{i,\ell}}(h_i\circ H^{-1})$ is $\a$-H\"older with zero average, and $\partial_{x_j}(H\circ \Gamma^{-1})_{\Gamma_i\circ H^{-1}y}$ and $J(y)$ 
are $\a$-H\"older functions as well.
Hence  we can use Theorem \ref{th:1} to estimate
\begin{align}\label{for:20a}
&\|\rho_i^{-n}L_i^{n}\big\langle D_{E^{i,\ell}}(h_i\circ H^{-1})_{L^{-n}y}\cdot B_n\cdot \partial_{x_j}(H\circ \Gamma_i^{-1})_{\Gamma_i\circ H^{-1}y},\,
 D^m_{E^u}(\psi)_{\Gamma_i\circ H^{-1}y} \cdot J(y) \big\rangle \|\notag\\
 &\leq K_2\,n^{2d}\, \norm{D_{E^{i,\ell}}(h_i\circ H^{-1})}_{C^\a}\, 
 \norm{\partial_{x_j}(H\circ \Gamma_i^{-1})_{\Gamma_i\circ H^{-1}y} \cdot D^m_{E^u}(\psi)\cdot J(y)}_{C^\a}\cdot e^{-\gamma n}.
\end{align}
Thus  the pairing \eqref{for:19} decays exponentially, specifically, there is $C=C(K_2,h_i,H,\Gamma_i,J)$ such that
\begin{align}\label{for:20}
 \|L_i^{n}\,\big\langle h_i\circ f^{-n}\circ \Gamma_i^{-1}, \, D^m_{E^u}\partial_{x_j}\psi \big\rangle \|
 \leq Cn^{2d} e^{-\gamma n}\,\norm{D^m_{E^u}(\psi)}_{C^\a} \; \text{ for all $n\in \N$}.
\end{align}


Now we estimate the terms in representation \eqref{hi series}, which are similar to the error term 
\eqref{for:19} with
$h_i$ replaced by $R_i$. However, we want to estimate by H\"older norm of $\psi$ in place of 
$\norm{D^m_{E^u}(\psi)}$ and so we want to move the derivative $D^m_{E^u}$ to the left and use
differentiability of $R=f-L$:
\begin{align}\label{for:18}
 \big\langle R_i\circ f^{-k}\circ \Gamma_i^{-1}, \, D^m_{E^u}\partial_{x_j}\psi \big\rangle
 =(-1)^{|m|+1}\big\langle D^m_{E^u}\partial_{x_j}(R_i\circ f^{-k}\circ \Gamma_i^{-1}), \, \psi \big\rangle.
\end{align}
However, having higher order derivatives on the left does not allow to use exponential mixing directly.

Instead,  we split $\psi$ into its truncation smoothing $\psi_N$ as in \eqref{trunc} and the error $\psi-\psi_N$.
Then for any $\b>0$ and $N\ge 1$ the following estimates hold
\begin{align} 
 \norm{\psi-\psi_N}_{L^2}&\leq \,N^{-\b}\norm{\psi}_{\mathcal{H}^\b} 
 \quad\text{and }\label{for:107}\\
 \norm{\psi_N}_{\mathcal{H}^\b}&\leq 2^{\b/2}\,N^{\b}\norm{\psi}_{L^2}. \label{for:142}
\end{align}
Indeed,
$$
 \norm{\psi-\psi_N}_{L^2}^2 =\sum_{\|n\| > N}  |\widehat{\psi}_n|^2 \le 
N^{-2\b} \sum_{\|n\| > N}  \| n\|^{2\b}|\widehat{\psi}_n|^2 \le N^{-2\b} \norm{\psi}_{\mathcal{H}^\b} ^2\quad\text{ and}
$$
$$
 \norm{\psi_N}_{\mathcal{H}^\b}^2 =\sum_{\|n\| \le N} (1+ \| n\|^2)^{\b} |\widehat{\psi}_n|^2 \le 
(1+ N^2)^{\b}\sum_{\|n\| > N} |\widehat{\psi}_n|^2 \le 2^\b N^{2\b} \norm{\psi}_{L^2} ^2.
$$

To estimate the pairing with $\psi_N$ we use bounds  \eqref{for:142} on the derivative 
$D^m_{E^u}\psi_N$ in terms of $\psi$. This allows us to repeat the
estimates \eqref{for:19} and \eqref{for:20} above replacing  $n$ by $k$, $h_i$ by $R_i$, and $\psi$ by $\psi_N$. Thus we obtain
\begin{align*}
 &\Big\|L_i^{k-1}\big\langle R_i\circ f^{-k}\circ \Gamma_i^{-1}, \, D^m_{E^u}\partial_{x_j}(\psi_N) \big\rangle\Big\| \le  Ck^{2d}
 \cdot \norm{D^m_{E^u}(\psi_N)}_{C^\a}\cdot e^{-\gamma k}\\
 &\leq C_1k^{2d} \cdot \norm{\psi_N}_{C^{|m|+1}}\cdot e^{-\gamma k}
 \leq C_1k^{2d}\cdot \norm{\psi_N}_{\mathcal{H}^{|m|+1+d}}\cdot e^{-\gamma k} 
 \quad\text{using  \eqref{for:124}}\\
  &\leq C_2 k^{2d}\cdot N^{|m|+d+1}\cdot\norm{\psi}_{L^2}\cdot e^{-\gamma k}
 \quad\text{using  \eqref{for:142}}.
\end{align*}


To estimate the pairing with $\psi_N -\psi$ we use the bounds on norms 
higher order derivatives of $f$, which we obtain in Lemma \ref{D^mD} in Section \ref{Dest} below. 
Specifically, using the second part of \eqref{D^mDest} with $g=f^{-1}$, $\phi=R_i$, $\w=\w^u$, $\w'=\w^{i,\ell}$, and $\la=\rho_i^{-1}$ we obtain that for each $m$ and $\delta>0$ there exists 
$K=K(\delta, m, \norm{R_i|_{\w^u}}_{C^{m+1}})$ such that for all $k \in \N$, 
$$ \norm{D^{m}_{\w^u} (D_{\w^{i,\ell}}( R_i\circ f^{-k}))}_{C^0}
\leq K(\rho_i^{-1}+\delta)^k\,   \norm{R_i|_{\w^u}}_{C^{m+1}}\cdot 
 $$
 Using this and \eqref{for:18} we estimate
\begin{align*}
 &\Big\|L_i^{k-1}\big\langle R_i\circ f^{-k}\circ \Gamma_i^{-1}, \, D^m_{E^u}\partial_{x_j}(\psi-\psi_N) \big\rangle\Big\|\\
 & = \Big\|L_i^{k-1}\big\langle D^m_{E^u}\partial_{x_j} (R_i\circ f^{-k}\circ \Gamma_i^{-1}), \, (\psi-\psi_N) \big\rangle\Big\|\\
 &\leq \norm{L_i^{k-1}}\cdot  \norm{D^{m}_{\w^u} (D_{\w^{i,\ell}}( R_i\circ f^{-k}))}_{C^0}
 \cdot \norm{\psi-\psi_N}_{L^2}\\
&\leq Ck^d\rho_i^k\cdot  \norm{R_i|_{\w^u}}_{C^{m+1}}\cdot 
 K(\rho_i^{-1}+\delta)^k   \cdot \norm{\psi-\psi_N}_{L^2}\\
 &\leq C_3 k^{d}\cdot (1+\rho_i \delta)^k\cdot N^{-\b}\norm{\psi}_{\mathcal{H}^\b} \quad\text{using  \eqref{for:107}}.
\end{align*}
Finally we combine the two estimates above to get an estimate for terms in \eqref{hi series} 
$$
\| L_i^{k-1}\big\langle R_i\circ f^{-k}\circ \Gamma_i^{-1}, \, D^m_{E^u}\partial_{x_j}\psi \big\rangle\|
\,\le\,  C_4k^{2d} \,\norm{\psi}_{\mathcal{H}^\b} \,(N^{|m|+d+1} e^{-\gamma k} +(1+\rho_i \delta)^kN^{-\b}).
 $$ 
For each $k$ we choose $N=N(k)=e^{{\gamma k}/{(2(|m|+d+1))}}$ 
so that we can write the last term as 
$$
N^{|m|+d+1} e^{-\gamma k} +(1+\rho_i \delta)^kN^{-\b}=
e^{-\gamma k/2}+(1+\rho_i \delta)^ke^{-{\b \gamma k}/{(2(|m|+d+1))}}
 $$ 
 and get an exponentially converging series for small enough $\delta$.
 
 We conclude that there is $0<\xi<1$ and a constant $C$ such that for all $k\in \N$ we have
\begin{align} \label{hi terms est}
\| L_i^{k-1}\big\langle R_i\circ f^{-k}\circ \Gamma_i^{-1}, \, D^m_{E^u}\partial_{x_j}\psi \big\rangle\|
\le  C\xi^k  \norm{\psi}_{\mathcal{H}^\b} 
  \end{align}
  and hence
\begin{align} \label{hi series est}
\|  \big\langle D^m_{E^u}\partial_{x_j} ( h_{i}\circ \Gamma_i^{-1}), \, \psi \big\rangle \|=\|
   \sum_{k=1}^\infty L_i^{k-1}\big\langle R_i\circ f^{-k}\circ \Gamma_i^{-1}, \, D^m_{E^u}\partial_{x_j}\psi \big\rangle \| \le  C_m  \norm{\psi}_{\mathcal{H}^\b}.
  \end{align}


Now we complete the proof of Proposition \ref{L^2} by showing that 
$D^m_{E^u}\partial_{x_j} (h_{i}\circ \Gamma_i^{-1}) \in L^2(\T^d)$ for any $m$.
We denote $\omega =\partial_{x_j} (h_{i}\circ \Gamma_i^{-1})$ and recall that it is 
$\a$-H\"older since the conjugacy $H$ and the global chart $\Gamma$ are $C^{1+\a}$
diffeomorphisms. It follows that $\omega$ is in $\mathcal {H}^{\b}$ for any $\b<\a$ by \eqref{Holder}.  
Then \eqref{hi series est} shows that for any $m$
and any $C^\infty$ function $\psi$ we have
\begin{align*} 
|  \big\langle D^{2m}_{E^u}\omega, \, \psi \big\rangle |=|  \big\langle \omega, \, D^{2m}_{E^u}\psi \big\rangle | \le  C_{2m}  \norm{\psi}_{\mathcal{H}^\b}.
  \end{align*}
  Hence we can apply Lemma \ref{to L^2} to conclude that $D^m_{E^u}\omega$ is in $L^2(\T^d)$.

\subsection{Derivative estimates} \label{Dest}
In this section we prove the derivative estimates used above. While estimates of derivatives 
of compositions along the stable manifold are not new, a precise reference is hard to give
as we need a specific result involving faster part. We will also need similar estimates along 
the center foliation in the next section. 
For a function $\phi$, a foliation
$\w$, and $m\in \N$ we will denote by $D^{m}_{\w} \phi$ the derivative of order $m$ of $\phi$ 
restricted to the leaves of $\w$. We view it here as an $m$-linear form on $T\w$ and 
we denote its norm by $\| D^{m}_{\w} \phi\|$.


\begin{lemma}\label{D^mf^n} 
Let $\w$ be a foliation of $\T^d$ with uniformly $C^\infty$ leaves invariant under a 
$C^\infty$ diffeomorphism $g$ such that $\| D g|_{\w}\| \le \sigma$. 
\vskip.1cm 
\begin{itemize}
\item[(i)]If $\sigma<1$ then for each $m$ and $\delta>0$ 
there exists $C=C(\delta, m, \| g|_\w\|_{C^m})$ such that
 \begin{align}\label{D^mfest}
\| D^{m}_{\w} g^n\| \le C(\s+\delta)^n \quad \text{and} \quad
\| D^{m}_{\w}  (\phi \circ g^n)\| \le C(\s+\delta)^n \| \phi|_\w\|_{C^{m}}
 \end{align}
 for any $\phi \in C^\infty(\T^d)$, where $D^{m}_{\w}$ is the derivative
 of order $m$ along $\w$.
 \vskip.1cm 
\item[(ii)] If $\sigma>1$ then for each $m$
there exists $C=C(m, \| g|_\w\|_{C^m})$ such that
 \begin{align}\label{D^mfest c}
\| D^{m}_{\w} g^n\| \le C\s^{mn} \; \text{ and } \;
\| D^{m}_{\w} (\phi \circ g^n)\| \le C\s^{m^2n} \| \phi|_\w\|_{C^{m}}
\;\text{ for any $\phi \in C^\infty(\T^d)$.}
 \end{align}
 \end{itemize}

 \end{lemma}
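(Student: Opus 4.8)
The plan is to induct on the order $m$ of the derivative. For $m=1$ the claim is immediate: in case (i), $\|D_\w g^n\|\le \prod_{k=0}^{n-1}\|Dg|_\w\|\circ g^k\le \s^n$, which is $\le(\s+\delta)^n$, and by the chain rule $\|D_\w(\phi\circ g^n)\|\le \|D_\w\phi\|\cdot\|D_\w g^n\|\le \|\phi|_\w\|_{C^1}(\s+\delta)^n$; in case (ii) one gets $\s^n$ directly. For the inductive step I would differentiate the cocycle relation $g^{n}=g\circ g^{n-1}$ (equivalently $g^n=g^{n-1}\circ g$) along $\w$ using the Fa\`a di Bruno / higher chain rule: $D^m_\w(g\circ g^{n-1})$ is a sum over partitions of $\{1,\dots,m\}$ of terms of the form $(D^p_\w g)\circ g^{n-1}$ composed with a product of $D^{q_1}_\w g^{n-1},\dots,D^{q_p}_\w g^{n-1}$ where $q_1+\dots+q_p=m$. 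Bounding $\|(D^p_\w g)\circ g^{n-1}\|\le \|g|_\w\|_{C^m}$ and inserting the inductive estimates for each lower-order factor $D^{q_j}_\w g^{n-1}$ then yields the result, after summing the finitely many partition terms (the number depends only on $m$).

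For case (i), the key point in this summation is that in each partition term exactly one factor has $q_j\ge 1$ carrying the ``full'' decay $(\s+\delta)^{n}$ while the other factors contribute at worst a subexponential amount — but that is not quite automatic since the inductive estimate for $D^{q}_\w g^{n-1}$ also contributes $(\s+\delta)^{n-1}$. Here I would absorb the accumulation of these extra geometric-type factors into the arbitrarily small $\delta$: more precisely, one proves by induction the sharper statement that $\|D^m_\w g^n\|\le C\,\s_\delta^{\,n}$ where $\s_\delta=\s+\delta$, by first fixing $\delta$, choosing an auxiliary $\delta'<\delta$ with $(\s+\delta')^{?}$ small enough, and noting that a product of $p\ge 2$ terms each $\lesssim (\s+\delta')^{n}$ is $\lesssim (\s+\delta')^{2n}=o((\s+\delta)^n)$ since $\s<1$; the single-factor partition (the term $(D^1_\w g)\circ g^{n-1}\cdot D^m_\w g^{n-1}$) carries the main term $\s\cdot(\s+\delta')^n\cdot(\text{const})$, and the constant is controlled because each appearance of $D^1_\w g$ along the orbit contributes a bounded factor. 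The statement about $\|D^m_\w(\phi\circ g^n)\|$ follows by the same Fa\`a di Bruno expansion with the outermost function being $\phi$ instead of $g$, so that one factor $\|D^p_\w\phi\|\le\|\phi|_\w\|_{C^m}$ replaces $\|g|_\w\|_{C^m}$.

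For case (ii) the bookkeeping is simpler because there is no competition: every factor is allowed to grow, and one only needs to track the exponent of $\s$. For $D^m_\w g^n$: in a partition term with parts $q_1,\dots,q_p$ ($\sum q_j=m$, $p\le m$), the factor $(D^p_\w g)\circ g^{n-1}$ is bounded, and each $D^{q_j}_\w g^{n-1}$ is $\le C\s^{q_j(n-1)}$ by induction, so the product is $\le C^p\s^{(\sum q_j)(n-1)}=C^p\s^{m(n-1)}\le C'\s^{mn}$ after adjusting constants; summing over the $O(1)$ partitions keeps the exponent $mn$. For $\|D^m_\w(\phi\circ g^n)\|$ I would instead use $\phi\circ g^n=(\phi\circ g^{n-1})\circ g$ or expand directly: a partition term is $\|(D^p_\w\phi)\circ g^n\|$ times $\prod_j \|D^{q_j}_\w g^n\|\le \|\phi|_\w\|_{C^m}\cdot\prod_j C\s^{q_j m n}$, and since $\sum q_j=m$ the total exponent is at most $m\cdot m\cdot n=m^2 n$, giving the claimed $\s^{m^2n}$ bound. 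The main obstacle is purely combinatorial/notational: organizing the higher chain rule along a foliation (where ``derivative'' means an $m$-linear form on $T\w$ and one must be careful that $\w$-derivatives of $g$ make sense because $\w$ is $g$-invariant with uniformly $C^\infty$ leaves) and verifying that in case (i) the many lower-order error terms are genuinely dominated after enlarging $\s$ by $\delta$; none of the individual estimates is deep, but the accounting must be done cleanly.
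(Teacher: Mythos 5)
Your plan matches the paper's own proof in all essentials: a double induction on the order $m$ and, within it, on $n$; Fa\`a di Bruno applied to $g\circ g^{n-1}$; isolation of the $p=1$ partition term $(D^1_\w g)\circ g^{n-1}\cdot D^m_\w g^{n-1}$; and absorption of the $p\ge 2$ terms. The only difference is bookkeeping — the paper fixes $\delta$ once and inflates the constant $c$ (taking $c>N(m)\|g|_\w\|_{C^m}\delta^{-1}$) so the remaining terms fit into the multiplicative gap $\delta\,c^{m-1}(\s+\delta)^n$, whereas you shrink to an auxiliary $\delta'<\delta$ so the $p\ge2$ products are visibly of smaller exponential order; both schemes work and are equivalent. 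One small slip in your case (ii) composition step: the induction gives $\|D^{q_j}_\w g^n\|\le C\s^{q_j n}$, not $C\s^{q_j m n}$, which would actually yield the sharper bound $C\s^{mn}$; the paper reaches the stated $\s^{m^2 n}$ by deliberately relaxing $\s^{jn}$ to $\s^{mn}$ before taking the product, so the conclusion is unaffected.
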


\begin{proof} (i) 
We abbreviate $D_{\w}$ to $D$ in this proof. We will show inductively that  for some $c$ 
 and all $m \le m_0$ we have $\| D^{m} g^n\| \le c^{m-1}(\s+\delta)^n$ for all  $n \in \N$. 
 In the base case $m=1$ by the assumption we have $\| D g^n\| \le \s^n$ for all $n \in \N$.
 Suppose the estimate holds for derivatives of orders up to $m-1$. 
 
 Now we show that $\| D^{m} g^n\| \le c^{m-1}(\s+\delta)^n$ for all  $n \in \N$ by induction on $n$.
The base case $n=0$ is trivial.
 For the inductive step we apply  Fa\`a di Bruno's formula to $D^{m} g^{n+1}= D^{m}( g \circ g^{n})$. 
 We slightly abuse notations by suppressing the base points, as they are not important in the estimate.
  \begin{align}\label{Faa}
D^{m}( g \circ g^{n}) = \sum_{k_1, \dots , k_m} C_{k_1, \dots , k_m} D^{k} g \,
[(D^{1} g^{n})^{ \otimes k_1} \otimes \dots \otimes (D^{m} g^{n})^{ \otimes k_m} ],
\end{align}
where $k =k_1 +\dots +k_m$ and the sum is taken over all $k_1, \dots , k_m$ such that 
$k_1 +2 k_2+ \dots +mk_m=m$. 
We note that $k_m=0$ unless $k_m=1=k$ and we can separate the corresponding term as
   \begin{align*}
D^{m}( g \circ g^{n}) =  Dg \, [D^{m} g^{n}]  +\sum_{k_1, \dots , k_{m-1}} C_{k_1, \dots , k_{m-1}} D^{k} g \,
[(D^{1} g^{n})^{ \otimes k_1} \otimes \dots \otimes (D^{m-1} g^{n})^{ \otimes k_{m-1}} ].
\end{align*}
We need to show $\|D^{m}( g \circ g^{n})\| \le c^{m-1}(\s+\delta)^{n+1}$ 
provided that $\| D^{m} g^n\| \le c^{m-1}(\s+\delta)^n$, which yields
$$
\|Dg \, [D^{m} g^{n}] \| \le \|Dg\| \cdot  \| D^{m} g^n\| \le \s \, c^{m-1}(\s+\delta)^n.
$$
Hence it suffices to estimate the norm of the sum from above by the difference
 \begin{align}\label{gap1}
c^{m-1}(\s+\delta)^{n+1} -  \s \, c^{m-1}(\s+\delta)^n =  \delta \, c^{m-1}(\s+\delta)^n.
  \end{align}
We estimate the norms of the terms using inductive assumptions as
   \begin{align*}
\| D^{k} g \|  \prod_{j=1}^{m-1} \| D^{j} g^{n}\| ^{ k_{j}} \,\le\, 
\| g|_\w\|_{C^k} \prod_{j=1}^{m-1}  [ c^{j-1}(\s+\delta)^n]^{k_j}\,=\,\| g|_\w\|_{C^k} \, c^{m-k}(\s+\delta)^{nk}.
\end{align*}
All terms in the sum have $k>1$ and hence each can be estimated as
   \begin{align*}
\|D^{k} g \, [ (D^{1} g^{n})^{ \otimes k_1} \otimes \dots \otimes (D^{m-1} g^{n})^{ \otimes k_{m-1}}]\|\, \le\, 
\| g|_\w\|_{C^k} \, c^{m-2}(\s+\delta)^{n}.
\end{align*}
for $\s+\delta<1$. The ratio of this to \eqref{gap1} is $\| g|_\w\|_{C^k} \, (c \delta)^{-1}$.
Hence the inductive step will hold if we choose $c> N(m)\, \| g|_\w\|_{C^m}\, \delta^{-1} >1$, where $N(m)$ is the 
 sum of the coefficients $C_{k_1, \dots , k_{m-1}}$. We can choose the same $c$  for all $m \le m_0$.
Thus the first estimate in the lemma holds with $C=c^{m-1}$ for any $m \le m_0$.
\vskip.1cm
To prove the second estimate in \eqref{D^mfest} we apply  Fa\`a di Bruno's formula to 
$D^{m} ( \phi \circ g^{n})$. Each term can be estimates as
  \begin{align*}
\|D^{k}_{\w} g \, [ (D^{1}_{\w} g^{n})^{ \otimes k_1} \otimes \dots \otimes (D^{m-1}_{\w} g^{n})^{ \otimes k_{m}}]\| \,\le\,
\| \phi|_\w\|_{C^k} \,  \prod_{j=1}^{m} \| D^{j}_{\w} g^{n}\| ^{ k_{j}} \le
\\
\| \phi|_\w\|_{C^k} \prod_{j=1}^{m}  [  c^{j-1}(\s+\delta)^n]^{k_j} \le\, \| \phi|_\w\|_{C^m} \, c^{m-k}(\s+\delta)^{nk}
\le\,  \| \phi|_\w\|_{C^m} \, C(\s+\delta)^{n}
\end{align*}
since $\s+\delta<1$ and $c^{m-k}\le c^{m-1} =C$. The estimate for the sum follows by adjusting $C$.
\vskip.2cm

(ii) The proof of the second part is similar so we just indicate the changes. We look for the
inductive estimate of the form $\| D^{m} g^n\| \le c^{m-1}\s^{mn}$, with the base $m=1$ 
given by the assumption. Writing $D^{m}( g \circ g^{n}) =  Dg \, [D^{m} g^{n}]  +\sum \dots$
we need to estimate $\|\sum \dots \|$ from above by the gap similar to \eqref{gap1}:
$$
 c^{m-1}\s^{m(n+1)} -  \s  c^{m-1}\s^{mn} \,=\,   c^{m-1}\s^{mn}(\s^m-\s) \,\ge\, c^{m-1}\s^{mn}(\s^2-\s)
 \quad\text{ for }m \ge 2. 
$$
The terms in  $\|\sum \dots \|$ have $k>1$ and can be estimated as before
   \begin{align*}
\|D^{k} g \, [ (D^{1} g^{n})^{ \otimes k_1} \otimes \dots \otimes (D^{m-1} g^{n})^{ \otimes k_{m-1}}]\| \le 
\| g|_\w\|_{C^k} \, c^{m-k}\s^{nk} \le \| g|_\w\|_{C^k} \, c^{m-2}\s^{mn}
\end{align*}
as $2\le k\le m$ and $\s>1$. Hence we can again choose $c$ large enough to obtain the estimate
$\| D^{m}_{\w} g^n\| \le C\s^{mn}$ with $C=c^{m-1}$. To prove the second inequality in 
\eqref{D^mfest c} we estimate each term $D^{m} ( \phi \circ g^{n})$ similarly to the above
with $\s>1$
 \begin{align*}
\|D^{k}_{\w} g \, [ (D^{1}_{\w} g^{n})^{ \otimes k_1} \otimes \dots \otimes (D^{m-1}_{\w} g^{n})^{ \otimes k_{m}}]\| \,\le\, 
\| \phi|_\w\|_{C^k} \,  \prod_{j=1}^{m} \| D^{j}_{\w} g^{n}\| ^{ k_{j}} \le
\\
\| \phi|_\w\|_{C^k} \prod_{j=1}^{m}  [  c^{j-1}\s^{mn}]^{k_j} \le\, \| \phi|_\w\|_{C^m} \, c^{m-k}\s^{mnk}
\,\le\,  \| \phi|_\w\|_{C^m} \, C\s^{m^2n}.
\end{align*}
\end{proof}


\begin{lemma}\label{D^mD} 
Let $\w$ and $\w'$ be  foliations of $\T^d$  invariant under a 
$C^\infty$ diffeomorphism $g$ with uniformly $C^\infty$ leaves  such that $\w'$ is a $C^\infty$ foliation of each leaf of $\w$.
Suppose that $\| D g|_{\w}\| \le \sigma<1$ and $\| D g|_{\w'}\| \le \la $.
Then for each $m$ and $\delta>0$ there exists $K=K(\delta, m, \| g|_\w\|_{C^{m+1}})$ such that
for all $n \in \N$, 
\begin{equation}\label{D^mDest}
 \begin{aligned}
&\| D^{m}_{\w} (D_{\w'}g^n)\| \le K(\la+\delta)^n \;\; \text{and} \\
&\| D^{m}_{\w}  D_{\w'}(\phi \circ g^n)\| \le K(\la+\delta)^n \,\| \phi|_\w\|_{C^{m+1}}
 \;\;\text{ for any } \phi\in C^\infty(\T^d).
 \end{aligned}
 \end{equation}
 \hskip1.5cm
\end{lemma}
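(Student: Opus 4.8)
The plan is to follow closely the template of the proof of Lemma \ref{D^mf^n}(i): a double induction, the outer one on the order $m$ and, for each fixed $m$, an inner one on the iterate $n$. First I would record the setup. Since $\w'$ is a $C^\infty$ subfoliation of the uniformly $C^\infty$ leaves of $\w$, on each leaf of $\w$ one can trivialize the bundle $T\w'$ smoothly and uniformly, so that $D_{\w'}g^n$ and $D_{\w'}(\phi\circ g^n)$ become matrix- and covector-valued $C^\infty$ functions along the leaves of $\w$; then $D^m_\w$ acting on them, the Leibniz rule, and the Fa\`a di Bruno formula are the standard ones, with all norm bounds uniform in the base point, which I would suppress as in Lemma \ref{D^mf^n}. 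In this picture $\|D^k_\w(D_{\w'}g)\|\le\|g|_\w\|_{C^{k+1}}$ and $\|D^k_\w(D_{\w'}\phi)\|\le\|\phi|_\w\|_{C^{k+1}}$, and, applying Lemma \ref{D^mf^n}(i) with an auxiliary parameter $\delta'<1-\sigma$, one gets a constant $C_0$ with $\|D^i_\w g^n\|\le C_0(\sigma+\delta')^n\le C_0$ for all $1\le i\le m$ and all $n\in\N$; these bounds on the iterates of $g$ are the only place the contraction $\sigma<1$ enters.

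For the first estimate in \eqref{D^mDest} I would induct on $m$. The base case $m=0$ is immediate from $D_{\w'}g^n(x)=\prod_{k=0}^{n-1}(D_{\w'}g)(g^kx)$, which gives $\|D_{\w'}g^n\|\le\la^n\le(\la+\delta)^n$. For the inductive step, assuming $\|D^{m'}_\w(D_{\w'}g^n)\|\le K_{m'}(\la+\delta)^n$ for all $m'<m$ and all $n$, I would establish the bound for $m$ by a second induction on $n$, starting from the identity $D_{\w'}g^{n+1}=[\,D_{\w'}g\circ g^n\,]\cdot D_{\w'}g^n$ (valid because $\w'$ is $g$-invariant). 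Applying $D^m_\w$ by the Leibniz rule splits off the ``main'' term $[\,D_{\w'}g\circ g^n\,]\cdot D^m_\w(D_{\w'}g^n)$, of norm at most $\la\cdot K(\la+\delta)^n$ by the $n$-step hypothesis (using $\|Dg|_{\w'}\|\le\la$), plus terms $\binom{m}{j}\,D^j_\w[\,D_{\w'}g\circ g^n\,]\cdot D^{m-j}_\w(D_{\w'}g^n)$ with $j\ge1$. Here $D^{m-j}_\w(D_{\w'}g^n)$ is controlled by the outer hypothesis (with $K_0=1$ when $m-j=0$), while $D^j_\w[\,D_{\w'}g\circ g^n\,]$ is expanded by Fa\`a di Bruno into terms $D^k_\w(D_{\w'}g)(g^nx)\cdot[(D^1_\w g^n)^{\otimes k_1}\otimes\cdots]$ with $k\ge1$, each bounded by $\|g|_\w\|_{C^{m+1}}C_0^k$ using the input above — and, crucially, involving no $D_{\w'}g^n$, so nothing is circular. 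Thus the $j\ge1$ terms total at most $C_2(\la+\delta)^n$ for a constant $C_2$ depending on $\delta,m,\|g|_\w\|_{C^{m+1}}$ and $K_0,\dots,K_{m-1}$, and choosing $K$ with $K\ge\max\{K_0,\dots,K_{m-1}\}$ and $K\delta\ge C_2$ closes the $n$-induction, since $\la K(\la+\delta)^n+\delta K(\la+\delta)^n=K(\la+\delta)^{n+1}$. Set $K_m:=K$.

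The second estimate follows in the same spirit, with no further induction on $n$: writing $D_{\w'}(\phi\circ g^n)=[\,D_{\w'}\phi\circ g^n\,]\cdot D_{\w'}g^n$ and applying $D^m_\w$ by Leibniz, the $j=0$ term is bounded by $\|\phi|_\w\|_{C^1}\cdot\|D^m_\w(D_{\w'}g^n)\|\le\|\phi|_\w\|_{C^1}\,K_m(\la+\delta)^n$ by the first estimate just proved, and for $j\ge1$ the factor $D^j_\w[\,D_{\w'}\phi\circ g^n\,]$ is handled by Fa\`a di Bruno exactly as above, giving a bound $C_3\|\phi|_\w\|_{C^{m+1}}$, while $D^{m-j}_\w(D_{\w'}g^n)$ is again bounded by the first estimate at the lower order $m-j<m$; summing yields $\|D^m_\w D_{\w'}(\phi\circ g^n)\|\le K(\la+\delta)^n\|\phi|_\w\|_{C^{m+1}}$ for a suitable $K=K(\delta,m,\|g|_\w\|_{C^{m+1}})$.

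I expect the main obstacle to be purely organizational: keeping the nested induction and the dependencies of the constants straight — each $K_m$ legitimately depends on $\delta$, $m$, $\|g|_\w\|_{C^{m+1}}$ and on $K_0,\dots,K_{m-1}$, which is harmless as it is a finite descent — together with the observation that at full order $m$ the quantity $D^m_\w(D_{\w'}g^n)$ occurs only in the single ``main'' term, which is precisely what lets the $n$-induction close (the factor $\la$ there leaves a gap $\delta K(\la+\delta)^n$ into which every lower-order contribution fits once $K$ is large enough). The only genuinely technical, as opposed to difficult, point is the smooth uniform trivialization of $T\w'$ along the leaves of $\w$ needed to interpret the iterated derivatives $D^m_\w(D_{\w'}g^n)$ in the first place, and this is exactly where the hypothesis that $\w'$ is a $C^\infty$ subfoliation of the uniformly $C^\infty$ leaves of $\w$ is used.
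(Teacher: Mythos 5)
Your proposal follows essentially the same route as the paper: write $D_{\w'}g^{n+1}=(D_{\w'}g\circ g^n)\cdot D_{\w'}g^n$, run a double induction (outer on $m$, inner on $n$), isolate the single top-order term $(D_{\w'}g\circ g^n)\cdot D^m_\w(D_{\w'}g^n)$ whose factor $\la$ leaves the gap $\delta K(\la+\delta)^n$, and absorb the lower-order Leibniz/Fa\`a di Bruno terms — controlled via Lemma \ref{D^mf^n}(i) applied to $D^j_\w(D_{\w'}g\circ g^n)$ — into that gap by choosing $K$ large. This matches the paper's argument in structure and in every essential estimate, differing only in bookkeeping conventions (separate constants $K_{m'}$ versus a single $c$ working for all orders up to $m_0$).
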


\begin{proof}
We again abbreviate $D_{\w}$ to $D$ in this proof. We also denote $G_x=D g|_{\w'(x)}$
and  $G^n_x=D g^n|_{\w'(x)}$. We will show inductively that  for some $c$ 
 and all $m \le m_0$ we have 
 $$\| D^{m} G^n_x\| \le c^{m}(\la+\delta)^n\quad\text{ for all  $n \in \N$.}
 $$ 
 In the base case $m=0$ we have $\| G^n_x\| \le \la^n$ for all $n \in \N$ by the assumption.
 Suppose the estimate holds for the derivatives of orders up to $m-1$. 

Now we show that $\| D^{m} G^n_x\| \le c^{m}(\la+\delta)^n$ for all  $n \in \N$ by induction on $n$.
The base case $n=1$ holds if $c$ is large.
 For the inductive step we differentiate $G^{n+1}_x=G_{g^nx} \circ G^n_x$ 
 as the product of two linear maps which depend on $x$:
  \begin{align}\label{PR}
D^{m} (G_{g^nx} \cdot G^n_x) =\sum_{k=0}^m {m\choose k} D^{k} (G_{g^nx}) \cdot D^{m-k} ( G^n_x) =\\
=G_{g^nx} \cdot D^{m} ( G^n_x)+\sum_{k=1}^m {m\choose k} D^{k} (G_{g^nx}) \cdot D^{m-k} ( G^n_x).
  \end{align}

We want to show that $\|D^{m} (G_{g^nx} \cdot G^n_x)\| \le c^{m}(\la+\delta)^{n+1}$ 
provided that $\| D^{m} ( G^n_x)\| \le c^{m}(\la+\delta)^n$, which yields
$$
\|G_{g^nx} \cdot D^{m} ( G^n_x) \| \le \|G_{g^nx}\|  \| D^{m} ( G^n_x)\| \le \la \, c^{m}(\la+\delta)^n.
$$
Hence it suffices to estimate the norm of the sum on the right from above by the difference
 \begin{align}\label{gap}
c^{m}(\la+\delta)^{n+1} -  \la \, c^{m}(\la+\delta)^n =  \delta \, c^{m}(\la+\delta)^n
  \end{align}
Now we estimate the norms of the terms in the sum.  By the inductive assumption we get
   \begin{align*}
\| D^{m-k} ( G^n_x) \|  \le c^{m-k}(\la+\delta)^n.
\end{align*}
To estimate $D^{k} ( G_{g^nx})=D^{k} [ (G \circ g^n)(x)]$ we use Lemma \ref{D^mf^n}(i)
with $\phi=G$:
$$\| D^{k} ( G_{g^nx})\| \le C(\s+\delta)^n \| G|_\w\|_{C^m} \le C(\s+\delta)^n \| g|_\w\|_{C^{m+1}}\le C \| g|_\w\|_{C^{m+1}}.$$
for $\s+\delta<1$. Hence terms in the sum with $k\ge1$  can be estimated  as
   \begin{align*}
\|D^{k} (G_{g^nx}) \cdot D^{m-k} ( G^n_x)\| \le Cg|_\w\|_{C^{m+1}} \cdot c^{m-k}(\la+\delta)^n
\le C c^{m-1}(\la+\delta)^n  \| g|_\w\|_{C^{m+1}}.
\end{align*}
The ratio of this to \eqref{gap} is $C \| g|_\w\|_{C^{m+1}}(c \delta)^{-1}$. Hence the inductive step will hold if 
we choose $c> 2^m C \| g|_\w\|_{C^{m+1}}\delta^{-1} $, where $2^m$ is the 
 sum of the binomial coefficients. Thus we can choose the same $c$  for all $m \le m_0$
and  the lemma holds with $K=c^m$ for any $m \le m_0$.

The second estimate in the lemma follows from the first one by applying 
 Fa\`a di Bruno's formula to $\phi \circ g^{n}$ and adjusting $K$
 in the same way as in  Lemma \ref{D^mf^n}.
\end{proof}


\section{Proof of Theorem \ref{PH}} \label{PHproof}
In this section we prove Theorem \ref{PH} by describing the adjustments we need to make 
in the proof of Theorem  \ref{mainA}.  

Since $L$ is partially hyperbolic we have the $L$-invariant partially hyperbolic splitting 
$$\R^d=E^s\oplus E^c \oplus E^u,$$ where $E^c$ is the sum of all generalized eigenspaces 
of $L$ corresponding to eigenvalues of modulus $1$.
Since  $f$ is $C^{1+\a}$ conjugate to $L$, it is also partially hyperbolic and it preserves the corresponding splitting 
$$T\T^d=\E^s\oplus \E^c \oplus \E^u
$$
into $\a$-H\"older  sub-bundles 
$\E^*=DH^{-1}(E^*)$  for  $*=s,c,u$, called stable, center, and unstable respectively.
Denoting the corresponding foliations for $L$ and $f$ by $W^s$, $W^c$,  $W^u$ and
$\w^s$, $\w^c$, and $\w^u$ respectively, we have $H(\w^{*})=W^{*}$ for $\ast = s,c,u$. 

In general, the center foliation of a partially hyperbolic system may fail to be absolutely continuous,
and the regularity of individual leaves may be lower than that of $f$, depending on the 
rate of expansion/contraction in $\E^c$. However, in our case $C^{1+\a}$ regularity of $H$ 
means that foliations $\w^s$, $\w^c$, and $\w^u$ are $C^{1+\a}$, and hence all
 three are absolutely continuous with the conditional measures on the leaves given by the 
 restriction of the volume form to their tangent spaces. In addition, the growth  of $Df^n|_{\E^c}$ 
  as $n\to \pm \infty$ is the same as for $L^n|_{E^c}$, that is, at most polynomial. 
 This implies that $f$ is so called {\em strongly r-bunched} for any $r$ and 
 hence $\w^{c}$ has uniformly $C^\infty$ leaves \cite{PSW}.
Existence of $H$ also implies that $\E^c\oplus \E^s$,  $\E^c\oplus \E^u$, and $\E^s\oplus \E^u$
are tangent to $C^{1+\a}$ foliations $\w^{cs}$, $\w^{cu}$, and $\w^{su}$ respectively. 
It follows that these foliations also have uniformly $C^\infty$ leaves, see for example \cite[Lemma 4.1]{KS06}.

As in the proof of Theorem  \ref{mainA}, we can assume that $H$ is in the homotopy class 
of the identity satisfying $H(0)=0$ and $f$ is in the homotopy class of $L$ satisfying $f(0)=0$. 

Using the splitting $\R^d= E^s\oplus E^c\oplus E^u$ for $L$ we define the projections $h^*$ and $R_*$ for $\ast = s,u,c$, of $h=H-\Id$ and $R=f-L$ respectively. Similarly 
to the hyperbolic case, projecting the second equation in \eqref{for:78} to $E^*$ we obtain 
for $\ast = s,u,c$
\begin{equation} \label{H^c}
h^*= L_*^{-1} (h_*\circ f)+L_*^{-1} R_*, \quad \text{where } L_*=L|_{E^*}.
\end{equation}
In particular, $h^u$  is given by \eqref{Hu},
and similarly $h^s=-\sum_{k=1}^\infty L_s^{k-1} (R_s \circ f^{-k}).$ Moreover, the following analog 
of Lemma \ref{h^u s smooth} holds.

\begin{lemma} \label{h^u sc smooth}
The unstable component $h^u$ is uniformly $C^\infty$ along $\w^{cs}$. 
The stable component $h^s$ is uniformly $C^\infty$ along $\w^{cu}$.
The center component $h^c$ is uniformly $C^\infty$ along $\w^{su}$.
\end{lemma}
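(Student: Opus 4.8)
The plan is to argue exactly as in the first part of the proof of Lemma~\ref{h^u s smooth}, now applied to the three mixed foliations. Write $W^{cs}$, $W^{cu}$, $W^{su}$ for the linear foliations of $\T^d$ tangent to $E^c\oplus E^s$, $E^c\oplus E^u$, $E^s\oplus E^u$ respectively. Two facts are needed. First, $H(\w^{cs})=W^{cs}$, $H(\w^{cu})=W^{cu}$, and $H(\w^{su})=W^{su}$; this is immediate from the construction of these foliations, since $\E^c\oplus\E^s=DH^{-1}(E^c\oplus E^s)$ forces the $C^{1+\a}$ foliation tangent to it to be $H^{-1}(W^{cs})$, and likewise for the other two. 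Second, $\w^{cs}$, $\w^{cu}$, $\w^{su}$ have uniformly $C^\infty$ leaves, which was recorded just above via \cite{KS06}.

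For the first assertion, fix a leaf $\mathcal L$ of $\w^{cs}$. Then $H(\mathcal L)$ is contained in a single leaf of $W^{cs}$, i.e.\ in an affine translate $v+(E^c\oplus E^s)$. Projecting to $E^u$, the locally defined unstable component $H^u=\Id^u+h^u$ of $H$ is locally constant along $\mathcal L$. Since $\Id^u$ is the linear, hence $C^\infty$, projection onto $E^u$, along $\mathcal L$ the function $h^u$ equals a constant minus $\Id^u$; thus $h^u|_{\mathcal L}$ has exactly the regularity of $\Id^u$ restricted to the $C^\infty$-immersed submanifold $\mathcal L$, with all leafwise derivatives bounded in terms of the uniform $C^\infty$ bounds on the leaves of $\w^{cs}$. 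Hence $h^u$ is uniformly $C^\infty$ along $\w^{cs}$.

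The other two assertions follow the same pattern. If $\mathcal L'$ is a leaf of $\w^{cu}$, then $H(\mathcal L')$ lies in an affine translate of $E^c\oplus E^u$, so the stable component $H^s=\Id^s+h^s$ is locally constant along $\mathcal L'$, giving $h^s$ uniformly $C^\infty$ along $\w^{cu}$; and if $\mathcal L''$ is a leaf of $\w^{su}$, then $H(\mathcal L'')$ lies in an affine translate of $E^s\oplus E^u$, so the center component $H^c=\Id^c+h^c$ is locally constant along $\mathcal L''$, giving $h^c$ uniformly $C^\infty$ along $\w^{su}$.

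I do not expect a real obstacle here, but the point worth emphasizing is why the geometric argument, rather than the term-wise differentiation alternative from Lemma~\ref{h^u s smooth}, is the one that must be used for the center component: there is no contraction-based series representation for $h^c$ analogous to \eqref{Hu}, because the operator $\psi\mapsto L_c^{-1}(\psi\circ f)+L_c^{-1}R_c$ is neither a contraction nor the inverse of one when $L_c$ has all its eigenvalues on the unit circle; and even for $h^u$, differentiating \eqref{Hu} term by term \emph{along $\w^{cs}$} would require estimating $D^m_{\w^{cs}}(R_u\circ f^k)$ along the non-contracting foliation $\w^{cs}$, where Lemma~\ref{D^mf^n}(ii) gives only an exponentially growing bound that the exponential decay of $\|L_u^{-k}\|$ need not beat. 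So the invariances $H(\w^{cs})=W^{cs}$, $H(\w^{cu})=W^{cu}$, $H(\w^{su})=W^{su}$ are essential, and the only remaining technical points are the routine uniformity of the leafwise $C^\infty$ bounds and the standard reduction to lifts needed to make $\Id^u$, $\Id^s$, $\Id^c$ meaningful.
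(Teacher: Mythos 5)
Your proof is correct and follows the same argument the paper uses: namely, that $H(\w^{cs})=W^{cs}$ (etc.) makes the corresponding component of $H=\Id+h$ locally constant along the relevant foliation, so $h^u$, $h^s$, $h^c$ inherit the uniform $C^\infty$ regularity of the leaves. Your closing remark correctly identifies why the term-wise differentiation alternative available for Lemma~\ref{h^u s smooth} is not viable here; the paper makes the same implicit choice by only invoking the geometric argument.
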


  \begin{proof} The proof is the same since $H^u=\Id_u+h^u$ is
 locally constant along the leaves of the foliation $\w^{sc}$, which  are uniformly $C^\infty$. Similarly, 
 $H^c=\Id_c+h^c$ is locally constant along the leaves of $\w^{su}$, which are  uniformly $C^\infty$.  \end{proof}
  
Now we  explain why $h^u$ and $h^s$ are $C^\infty$ on $\T^d$.
For $h^u$ we use Lemma  \ref{h^u sc smooth} in place of Lemma \ref{h^u s smooth} and modify the charts $\Gamma_i$ by including the center component. Then we can apply the proof of 
Theorem \ref{th ind} without change, as the arguments work within the unstable foliation.
Thus we obtain that $h^u$, and similarly $h^s$, are $C^\infty$ on $\T^d$, 
and hence so are $H^u$ and $H^s$. 

It remains to show that $h^c$ is $C^\infty$ on $\T^d$. 
We give a proof by modifying our arguments for $h^u$.
By Lemma \ref{h^u s smooth} we already have that $h^c$ is uniformly $C^\infty$ along 
$\w^{su}$. Using the next proposition, we obtain global smoothness 
of $h^c$ on $\T^d$ from \cite[Theorem 3]{L01}. The required properties of $\w^c$ 
 were givend above.

\begin{proposition}\label{L^2c}  
$D^m_{\w^c}   h^{c} \in L^2(\T^d)$ for every every multi-index  $m$.
   \end{proposition}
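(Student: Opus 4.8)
The plan is to prove Proposition~\ref{L^2c} by the same two‑part scheme that produced Theorem~\ref{th ind}: a dynamical step that puts all derivatives of $h^c$ along $\w^c$ into $L^2$ provided one extra derivative is taken along $\w^u$, followed by an analytic step that removes this extra derivative using a Diophantine property. The new feature is that the twisting matrix $L_c=L|_{E^c}$ is now \emph{neutral}: all of its eigenvalues have modulus one, so $\|L_c^{\pm k}\|$ grows only polynomially in $k$ and neither series for $h^c$ coming from \eqref{for:78} converges in $C^0$. A single derivative along $\w^u$ cures this, because $\|Df^{-k}|_{\w^u}\|$ decays exponentially, which more than compensates the polynomial growth of $\|L_c^{k-1}\|$; the polynomial factors that survive are then absorbed by exponential mixing (Theorem~\ref{th:1}).

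For the dynamical step I would first build, exactly as in Proposition~\ref{chart}, global foliation charts adapted to $\w^c$ and $\w^u$; this is possible since $h^u$ and $h^s$ are already $C^\infty$ on $\T^d$ and $h^c$ is uniformly $C^\infty$ along $\w^{su}$ by Lemma~\ref{h^u sc smooth}, so only the center component $h^c$ needs to be smoothed. Then I would iterate \eqref{for:78} in backward time, $h^c=L_c^{n}(h^c\circ f^{-n})-\sum_{k=1}^{n}L_c^{k-1}(R_c\circ f^{-k})$, and pair with a smooth test function carrying $m$ derivatives along $\w^c$ and one along $\w^u$. After integrating by parts in the $\w^u$‑direction and using that $H$ conjugates $f^{-n}$ and $L^{-n}$, the error term reduces to the pairing of $D_{E^u}(h^c\circ H^{-1})\circ L^{-n}$ — an $\a$‑H\"older function of zero average — against a smooth test function, times factors bounded by a fixed power of $n$; by Theorem~\ref{th:1} this is $O(n^{C}e^{-\gamma n})$ and vanishes as $n\to\infty$. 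The same computation with $R_c$ in place of $h^c$, together with the splitting $\psi=\psi_N+(\psi-\psi_N)$, the bounds \eqref{for:107} and \eqref{for:142}, and the derivative estimates of Section~\ref{Dest} (including their center‑foliation versions, $f^{-1}$ being neutral along $\w^c$ and contracting along $\w^u$), shows that the $k$‑th term of the series is $\le C\xi^{k}\,\|\psi\|_{\mathcal H^{\b}}$ with $\xi<1$, exactly as in the proof of Proposition~\ref{L^2}. Summing over $k$ and applying Lemma~\ref{to L^2} yields $D^{m}_{\w^c}\partial_{x_j}h^c\in L^2(\T^d)$ for every multi‑index $m$ and every $x_j$ in a fixed basis of $E^u$.

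For the analytic step I would apply Proposition~\ref{dioph} with $V=E^u$ and $E=E^c$. The space $E^u$ has the Diophantine property: as in \eqref{LA} one checks that $(E^u)^\perp$ is the sum of the generalized eigenspaces of $L^{\tau}$ of modulus at most one, and if $0\ne n\in (E^u)^\perp\cap\Z^d$ then $\operatorname{span}\{(L^{\tau})^{k}n:k\in\Z\}$ would be a rational $L^{\tau}$‑invariant subspace whose characteristic polynomial is a product of prime factors of the characteristic polynomial of $L$, each of which has a root of modulus $\rho_{\max}>1$ by very weak irreducibility — contradicting that $L^{\tau}$ has all eigenvalues of modulus $\le1$ on that subspace; Katznelson's Lemma~\ref{Katz} then gives the quantitative bound \eqref{diophantine}, just as in the Diophantine lemma for the spaces $E^{i,\ell}$. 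Hence Proposition~\ref{dioph} gives $|\langle D^{m}_{\w^c}h^c,\psi\rangle|\le K\|\psi\|_{\mathcal H^{\b}}$ for $0<\b<1$ (recall that $h^c$ pulled back by the chart is $C^{1+\a}\subset\mathcal H^{\b}$), and Lemma~\ref{to L^2} upgrades this to $D^{m}_{\w^c}h^c\in L^2(\T^d)$ for every $m$, which is the assertion of the proposition.

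I expect the main obstacle to be the construction of charts compatible with differentiation along both the smooth, fast foliation $\w^u$ and the rough, neutral foliation $\w^c$ simultaneously, since the center component of $H$ must be smoothed to keep high‑order $\w^c$‑derivatives of $R_c\circ f^{-k}$ manageable while $\w^u$ must remain straightened for the Diophantine step. The neutrality of $L_c$ — the reason the extra $\w^u$‑derivative is indispensable — is the conceptual rather than technical difficulty, and it is handled precisely as indicated above.
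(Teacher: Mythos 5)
Your proposal takes a genuinely different (and more involved) route than the paper, and it contains a gap that you partially anticipate but do not resolve.

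The paper's actual proof is \emph{simpler} than the proof of Proposition~\ref{L^2}, not a repeat of it: since $L_c=L|_{E^c}$ has all eigenvalues of modulus one, $\|L_c^{k}\|$ grows only polynomially, and this polynomial growth is already dominated by the exponential decay coming from mixing (Theorem~\ref{th:1}) applied to the zero--mean function $D^m_{E^c}\psi$ (after the change of variables the transformed test function still has zero average for $|m|\ge1$). Consequently the distributional series $\big\langle h^{c}\circ \Gamma_c^{-1},D^m_{E^c}\psi\big\rangle=\sum_{k\ge1}L_c^{k-1}\big\langle R_c\circ f^{-k}\circ\Gamma_c^{-1},D^m_{E^c}\psi\big\rangle$ converges with \emph{no} extra derivative $\partial_{x_j}$ at all; the chart $\Gamma_c$ only needs to straighten $\w^c$, which is harmless since only the center component $h^c$ is smoothed. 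There is therefore no Diophantine step in the center direction, and Proposition~\ref{dioph} is not used. Your plan, by contrast, imports one extra derivative along $\w^u$ and then removes it via a Diophantine property of $E^u$; the Diophantine verification you sketch is fine (it is the analogue of Lemma~\ref{weak irred}/Katznelson for $V=E^u$), but the whole step is unnecessary here and misses the central observation that the neutral twist makes the error term vanish on its own.

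The gap you acknowledge is, however, a real obstruction to your route rather than a technicality. For the chain rule in \eqref{for:19} to produce a contracting factor $L^{-n}|_{E^u}$ from the extra $\partial_{x_j}$, one needs $\partial_{x_j}(H\circ\Gamma^{-1})_x\in E^u$, equivalently $\Gamma(\w^u)=W^u$. In the partially hyperbolic case a point $x'$ lies in $\w^u(x)$ iff \emph{both} $H^s(x')=H^s(x)$ and $H^c(x')=H^c(x)$, so any chart straightening $\w^u$ must have its $E^c$-component equal to $H^c=\Id^c+h^c$. Smoothing $h^c$ destroys $\Gamma(\w^u)=W^u$, while keeping it means the chart is only $C^{1+\alpha}$ along $\w^c$ and its Jacobian fails the hypotheses of \cite[Theorem 3]{L01}; and with $\Gamma=H$ the resulting conclusion $D^m_{E^c}(h^c\circ H^{-1})\in L^2$ cannot be transferred to the smooth chart $\Gamma_c$ through the merely $C^{1+\alpha}$ change of variables $H\circ\Gamma_c^{-1}$. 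So the chart simultaneously adapted to $\w^u$ and smooth along $\w^c$ that your argument requires does not exist under the given hypotheses --- this is precisely why the paper works without the extra unstable derivative.
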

   \begin{proof}
The proof is a significantly simplified version of the proof of Proposition \ref{L^2}.  
Since we study the derivatives of $h^c$ along $\w^c$, so we do not need any further splitting
of $E^c$ and we do not need to separate derivative $\partial_{x_k}$ as in that proposition.
In particular we do not need Proposition \ref{Dioph} to remove $\partial_{x_k}$.

We will use the foliation chart $\Gamma _c$ obtained as in Proposition \ref{chart} by 
smoothing $h^c$ component of $H$ similarly to \eqref{chart h} and  \eqref{chart G}
\begin{align*} 
\Gamma_c=\Id+\tilde h_\varepsilon , \quad \text{ where } \; 
\tilde h_\varepsilon =\big(h^s, {s}_\varepsilon(h^c),  h^u\big).
\end{align*}
The diffeomorphism $\Gamma _c$ is $C^{1+\a}$ on $\T^d$, uniformly $C^\infty$ along $\w^c$, and  satisfies $\Gamma _c(\w^c)=W^c$.

The reason why the neutral case is simpler is that the equation \eqref{H^c} for $h^c$ is already ``neutral" since $L^n|_{E^c}$ has at most polynomial growth. Since this growth is
 dominated by the exponential mixing one can easily see that $h^c$ itself, unlike $h^u$, can be
 written as a series  
 $
h^c =-\sum_{k=1}^\infty L_c^{k-1} (R_c \circ f^{-k}) 
$ 
 in distributional sense. More formally, we obtain the following series representation 
 for $D^m_{\w^c}$ in the same way as \eqref{hi series}  replacing $\rho_i$ with $\rho_c=1$
\begin{align} \label{hc series}
   \big\langle h^{c}\circ \Gamma_c^{-1}, \, D^m_{E^c}\psi \big\rangle=
   \sum_{k=1}^\infty L_c^{k-1}\big\langle R_c\circ f^{-k}\circ \Gamma_c^{-1}, \, D^m_{E^c}\psi\big\rangle.
  \end{align}
Then we use the same estimates as in the proof of \eqref{hi terms est} to obtain
the analog of \eqref{hi series est}
\begin{align} \label{hc series est}
\|  \big\langle D^m_{E^c} ( h^{c}\circ \Gamma_c^{-1}), \, \psi \big\rangle \|=\|
   \sum_{k=1}^\infty L_c^{k-1}\big\langle R_c\circ f^{-k}\circ \Gamma_c^{-1}, \, D^m_{E^c}\psi \big\rangle \| \le  C_m  \norm{\psi}_{\mathcal{H}^\b}.
  \end{align}
The only differences are the absence of $\partial_{x_k}$ term, replacing $\rho_i$ with $\rho_c=1$,
and estimating  norms of higher order derivatives of $f$ using  Lemma \ref{D^mf^n}(ii).
Specifically, we use the second part of \eqref{D^mfest c} with 
$g=f^{-1}$, $\phi=R_c$, $\w=\w^c$, and $\s=1+\delta$ with $\delta$ small enough.

Finally we apply Lemma \ref{to L^2} in the same way as in the proof of Proposition \ref{L^2c} to conclude that $ D^m_{E^c}( h^{c}\circ \Gamma_c^{-1})$ is in $L^2(\T^d)$ for all $m$.
 
 This completes the proof of Theorem \ref{PH}.
  \end{proof}


\end{document}